
\documentclass[journal]{IEEEtran}
\ifCLASSINFOpdf
   \usepackage[pdftex]{graphicx}
\else
   \usepackage[dvips]{graphicx}
\fi
%
%

%
\usepackage[utf8]{inputenc}
\renewcommand{\l}{\left}
\renewcommand{\r}{\right}

\usepackage{enumerate}
\usepackage{amsmath}
\usepackage{amssymb}
\usepackage{amsthm}
\usepackage{xspace}
\usepackage{comment}
\usepackage{algorithm}
\usepackage{algorithmic}
\usepackage{eufrak}
\usepackage{color}
\definecolor{red}{rgb}{1.0,0.0,0.0}
\definecolor{blue}{rgb}{0.0,0.0,1.0}
\newcommand{\niek}[1]{\textcolor{red}{\textsf{Niek: #1}}\xspace}
\newcommand{\andy}[1]{\textcolor{blue}{\textsf{Andrey: #1}}\xspace}

\newcommand{\reflem}[1]{Lemma~\ref{lem:#1}\xspace}
\newcommand{\refthm}[1]{Theorem~\ref{thm:#1}\xspace}
\newcommand{\refdef}[1]{Definition~\ref{def:#1}\xspace}
\newcommand{\refcor}[1]{Corollary~\ref{cor:#1}\xspace}

\newcommand{\refsec}[1]{Section~\ref{sec:#1}\xspace}

\newcommand{\reffig}[1]{Figure~\ref{fig:#1}\xspace}
\newcommand{\refeq}[1]{Eqn.~\eqref{eq:#1}\xspace}
\newcommand{\refprop}[1]{Proposition~\ref{prop:#1}\xspace}

\newcommand{\defas}{:=} 
\newcommand{\term}[1]{\textsl{#1}\xspace}

\newcommand{\cl}{\textsc{commelec}\xspace}
\newcommand{\pqprof}{$PQ$ profile\xspace}
\newcommand{\posint}{\ensuremath{\mathbb{N}_{>0}}\xspace}
\newcommand{\mcal}[1]{\ensuremath{\mathcal{#1}}\xspace}

\newtheorem{theorem}{Theorem}
\newtheorem{lemma}{Lemma}
\newtheorem{corollary}{Corollary}
\newtheorem{proposition}{Proposition}
\newtheorem{observation}{Observation}
\theoremstyle{definition}
\newtheorem{remark}{Remark}
\newtheorem{example}{Example}
\newtheorem{definition}{Definition}

\newcommand{\diam}{\mathsf{diam}}

\newcommand{\vsect}[2][c]{#2_{\{#1\}}} 
\newcommand{\conv}{\mathsf{ch}}

\newcommand{\vor}[2][]{\mathrm{Vor}_{#1}(#2)} 

\newcommand{\natnum}{\mathbb{N}}
\newcommand{\reals}{\mathbb{R}}

\renewcommand{\S}{\ensuremath{\mathcal{S}}\xspace}
\renewcommand{\P}{\ensuremath{\conv{\S}}\xspace}
\newcommand{\A}{\ensuremath{\mathcal{A}}\xspace}

\renewcommand{\SS}{\ensuremath{\mathbb{S}}\xspace}

\DeclareMathOperator*{\argmin}{arg\,min}

\ifCLASSOPTIONcompsoc
  \usepackage[caption=false,font=normalsize,labelfont=sf,textfont=sf]{subfig}
\else
  \usepackage[caption=false,font=footnotesize]{subfig}
\fi

\begin{document}
%
\title{Real-Time Minimization of Average Error in the Presence of Uncertainty and Convexification of Feasible Sets}

%
%
%

\author{
  Andrey~Bernstein,~\IEEEmembership{Member,~IEEE,}
  Niek~J.~Bouman,~\IEEEmembership{Member,~IEEE,}
  Jean-Yves Le Boudec,~\IEEEmembership{Fellow,~IEEE}
\thanks{The authors are with EPFL, Lausanne, Switzerland.}
\thanks{Manuscript received XXXXXXXX, 2015; revised XXXXXXX, 2015.}}

\maketitle

\begin{abstract}
  We consider a two-level discrete-time control framework with real-time constraints 
where a central controller issues setpoints to be implemented by local controllers. The local controllers implement the setpoints with some approximation and advertize a prediction of their constraints to the central controller. The local controllers might not be able to implement the setpoint exactly, due to prediction errors or because the central controller convexifies the problem for tractability. 
%
%
In this paper, we propose to compensate for these mismatches at the level of the local controller by 
using a variant of the error diffusion algorithm. 
We give conditions under which the minimal (convex) invariant set for the accumulated-error dynamics is bounded, and give a computational method 
to construct this set. 
This can be used to compute a bound on the accumulated error and hence establish convergence of the average error to zero. 
We illustrate the approach in the context of real-time control of electrical grids. 

\end{abstract}

\begin{IEEEkeywords}
Hierarchical control, real-time control, error diffusion, convex dynamics, robust set-invariance, power grids
\end{IEEEkeywords}

%
\IEEEpeerreviewmaketitle

\section{Introduction}

We consider a two-level discrete-time control framework with real-time constraints, consisting of several \emph{local controllers} and a \emph{central controller}. 
Such a framework was recently proposed in the context of real-time control of electrical grids \cite{commelec}.
The task of a local controller is (i) to implement setpoints issued by the central controller, and (ii) to advertise a prediction of the constraints on the feasible setpoints to the central controller. In turn, the central controller uses these advertisements to compute next feasible setpoints for the local controllers. These setpoints correspond to the solution to an optimization problem posed by the central controller. Due to real-time constraints, the central controller is restricted to operate on \emph{convex feasible sets} and \emph{continuous variables}. Hence, the advertisement of the local controllers is in the form of convex sets.

  
  Formally, the interaction between the local controllers and the central controller is assumed to be as in Algorithm \ref{alg:inter}.
\begin{algorithm}[h!]
\caption{Interaction between local controllers and central controller} \label{alg:inter}
\begin{algorithmic}[1]
\STATE{Set $n = 0$.}
\LOOP
    \STATE{At time step $n$, every local controller:} \label{state:n}
    \begin{enumerate}[(a)]
        \item Receives a setpoint request $x_n$ sent by the central controller.
        \item Implements an approximation $y_n$ of $x_n$. The implemented setpoint $y_n$ is constrained to lie in some set $\S_n \in \SS$, where $\SS$ is the collection of all possible feasible sets of the local controller.
        \item Performs a prediction of its  feasible set $\S_{n+1} \in \SS$ that will be valid at step $n+1$, and advertises to the central controller the convex hull $\A_{n+1}=\conv \S_{n+1}$.
    \end{enumerate}
    \STATE{Upon receiving the advertisements from all the local controllers, the central controller chooses the setpoints $x_{n+1} \in \A_{n+1}$ for every local controller and sends them over a communication network.}
    \STATE{$n := n + 1$.}
\ENDLOOP
\end{algorithmic}
\end{algorithm}

  \begin{example}
  Consider a central grid controller, whose task is to control the grid and the resources connected to it in real-time.
  Consider a single-phase (or balanced) system, where the setpoints are pairs $(P, Q) \in \reals^2$ that represent the requested active ($P$) and reactive ($Q$) power consumption/production. As an example of the resources, consider a photovoltaic (PV) plant and a heater system with a \emph{finite} number of heating states. In the case of the PV, the local controller predicts the set of feasible (implementable) power setpoints $\S_{n+1}$ in step 3(c) of Algorithm \ref{alg:inter}, but the actual set at the time of the implementation may differ from the prediction due to high volatility of solar radiation. In the case of the heater system, the set $\S_{n+1}$ is a finite set. Thus, the corresponding local controller sends a convex hull of $\S_{n+1}$ in step 3(c) of Algorithm \ref{alg:inter}, and consequently may receive a non-implementable setpoint from the central controller in the next step. 
  \end{example}

  This example illustrates the source of the potential difference between $y_n$ and $x_n$ in step 3(b) of Algorithm \ref{alg:inter}. More generally, the local controller might not be able to exactly implement the requested setpoint, because of two reasons: 
  \begin{enumerate}
  \item \emph{Due to convexification of the feasible set.} The actual set of implementable setpoints might be non-convex. For example, suppose that the local controller is controlling a collection of ``on-off'' devices, which would correspond to a discrete set of implementable setpoints.
 \item\emph{Because of uncertainty.} The actual set of implementable setpoints might differ from its prediction, for example, because of external disturbances.
  \end{enumerate}
  
  In this paper, we propose to use the metric of total accumulated error between  requested and implemented setpoints.
  Our goal is to analyze the \emph{greedy algorithm} for the choice of $y_n$ in step 3(b) of Algorithm \ref{alg:inter}, namely the algorithm that  performs \emph{online minimization of the accumulated error}. (This algorithm is also called the \emph{error diffusion} algorithm and is well-known in the context of image processing and digital printing.) We show conditions on the collection $\SS$ under which the accumulated error is bounded for all $n$ and propose computational methods to find tight bounds. As a consequence, the average error converges to zero. In Section \ref{sec:rel_work}, we discuss how our approach compares to the existing literature. 
  
  In general, the boundedness of accumulated error (and thus convergence of the average error to zero) is a relevant metric in any application where the integral of the control variable is an important quantity. In this paper, we illustrate our approach in the context of real-time control of electrical grids \cite{commelec}, where the setpoints are active and reactive power injections/absorptions, and the integral thereof is the consumed/produced energy. 
  In this application, ``real-time'' means a period time in the order of $100$ ms. Thus, the proposed framework with a fast and efficient central controller is a natural solution, as discussed in detail in \cite{commelec}. Other applications in which boundedness of accumulated error is relevant include signal processing, digital printing, scheduling, and assignment problems \cite{adler2005, errDiffOpt}.

\subsection{Related Work} \label{sec:rel_work}

The problem of the local controller can be viewed as controlling the quantity $e_n$ defined recursively by
\[
e_{n+1} = e_n + x_n - y_n,
\]
where $x_n$ is the requested setpoint by the central controller and $y_n$ is an implemented setpoint by the local controller; see Algorithm \ref{alg:inter}.
Note that in this context, $y_n$ is an \emph{input} (or \emph{control}) variable, while $x_n$ can be viewed as an exogenous disturbance variable. 

The task of the local controller is  to find a policy that achieves $e_n \in Q$ for all $n$, for some bounded set $Q$. At first glance, this is a standard goal in control theory, and involves a \emph{robust control invariant set} \cite{bla99}. 
Moreover, when the set $\S_n$ is non-convex or  discrete, this problem is closely related to control of Mixed Logic Dynamical (MLD) Systems (e.g., \cite{Branicky, Bemporad, LYGEROS}). The classical approach to design a robust controller in this setting is based on formulating a corresponding MPC and solving MILP or MIQP problems.
However, in real-time applications, solving MIQP/MILP at every time step may be not feasible.
Moreover, our problem has the following features which are not present in the classical setting:
\begin{enumerate}
\item The feasible set $\S_n$ is not known in advance and depends on the history of the process up to time step $n$.
\item The feasible set $\S_n$ can be uncertain in the sense that it is not known to the central controller at the time of the decision making, and thus prediction errors might arise.
\item We allow to control the set of possible ``disturbances'' $\A_n$ -- it is the advertisement sent by the local controller.  
\end{enumerate}
Thus, the methods from the classical control theory do not apply directly.

A different approach, which we pursue in the present paper, is to directly analyze a specific algorithm for the choice of $y_n$, namely a \emph{greedy algorithm} which, at each time step, minimizes the next-step accumulated error. In particular, it chooses $y_n = \vor[\S_n]{x_n + e_{n}}$, where $\vor[\S]{z}$ is the closest point to $z$ in $\S$. The classical version of this algorithm is known as \emph{error diffusion} (or \emph{Floyd-Steinberg dithering}) in the field of image processing and digital printing, where the variables are one-dimensional \cite{floyd75, gray, Anastassiou, adams}. 
The extension to the general $d$-dimensional case was considered over the recent years in several papers.
In \cite{nowicki2004}, the problem of a single (fixed) feasible set $\S$ is considered in the special case where $\S$ are the \emph{corner points} of a polytope $\A = \conv{\S}$, and an algorithm to construct a minimal invariant set for the corresponding dynamical system is proposed. However, the boundedness of this set is not guaranteed in general.
\cite{adler2005} show how to construct bounded invariant sets for that problem, and extend the results to a finite collection of polytopes $\A_1,\ldots,\A_K$. Namely, they show that there exists a bounded set  $Q$ that is simultaneously invariant for the dynamical systems defined with respect to $\A_1,\ldots,\A_K$.
\cite{tresser2007} extends the results of \cite{adler2005} to the case where the polytope may change from step to step, and argues that there exists a bounded invariant set $Q$ for this changing dynamical system. Moreover, the conditions are extended to an infinite collection of polytopes, provided that the set of face-normals of this collection is finite.
Other papers in this line of research consider specific applications \cite{boundsHalftoning, mathHalftoning, errDiffOpt} and/or other special cases \cite{acute}. Finally, the optimality of the error diffusion algorithm was recently analyzed in \cite{errDiffOpt}.

\subsection{Our Contribution}
Our paper extends the state-of-the-art on the general error-diffusion algorithm, with the following main contributions that are relevant to our control application:
\begin{itemize}
\item We consider general non-convex feasible sets $\S$ rather than corner points of a polytope.
\item We consider an uncertain case, in which the feasible set is not known at the time of advertisement and hence predicted. Specifically, in this case, it may happen that $\A \neq \conv{\S}$.
\item We propose 
a computational method for constructing the minimal invariant set for the accumulated error dynamics in the case of finite collection of feasible sets. We also show some important special cases in which  the minimal invariant set can be computed explicitly. As a result, we obtain tight bounds on the accumulated error.
\end{itemize}

\section{Notation}

Throughout the paper, $\|\cdot\|$ denotes the $\ell_2$ norm. We use $\natnum$ and  $\posint$ to refer to the natural numbers including and excluding zero, respectively. For arbitrary $n\in \natnum_{>0}$, we write $[n]$ for the set $\{1,\ldots,n\}$. 

Let $d\in \natnum_{>0}$.
For arbitrary sets $\mcal{U},\mcal{V} \in \reals^d$,  $\mcal{U}+\mcal{V}$ represents the Minkowski sum of \mcal{U} and \mcal{V}, which is defined as
$\mcal{U}+\mcal{V} := \{u+v \,|\,  u \in \mcal{U},  v\in \mcal{V} \}$. Likewise, $\mcal{U}-\mcal{V}$ represents the Minkowski difference, defined as $\mcal{U}-\mcal{V} := \{u -v) \,|\,  u \in \mcal{U},  v\in \mcal{V} \}$. We let $\conv \mcal{U}$ denote the convex hull of the set $\mcal{U}$, and by $\partial \conv \mcal{U}$ we denote the \emph{boundary} of the convex hull of \mcal{U}.

For any compact set \mcal{V}, we define the
\term{diameter of \mcal{V}} as
\[
\diam \mcal{V} := \max \{ \| v-w \| : v,w \in \mcal{V} \}.
\]

  Let $\mcal{S} \subset \reals^d$ be an arbitrary non-empty closed set.
  Any mapping $\vor[\mcal{S}]{x}$ that satisfies
\[
  \vor[\mcal{S}]{x} =  c, \quad\text{where}\quad
c \in \arg \min_{\rho \in\mcal{S}} \|\rho-x \|.
\]
is called  a \term{closest-point} (or \emph{projection}) operator onto \mcal{S}.
The \emph{Voronoi cell} associated with the set $\S$ and a point $c \in \S$ is defined as
\[
V_{\S}(c) = \{x \in \reals^d: \, \|x - c\| \leq \|x - c'\|, \forall c' \in \S\}.
\]
For any set $\mcal{V} \subseteq \reals^d$, we denote the intersection of $\mcal{V}$ with $V_{\S}(c)$ by
\[
\mcal{V} \cap V_{\S}(c) := \mcal{V}_{\{c\}}
\]
whenever the set $\S$ is clear from the context.

%
%
%





Finally, throughout the paper, a ``set'' (or ``subset'') denotes a ``closed set'' (or ``closed subset'') unless specified otherwise explicitly.

\section{Problem Definition}
Fix the dimension $d\in \natnum$. Let $\SS$ be a collection of subsets of $\reals^d$. This collection represents the set of all possible feasible sets of the local controller. Recall that the interaction between the local controllers and the central controller is given in Algorithm \ref{alg:inter}.

  In this paper, we focus on two cases for the prediction step 3(c):
  \begin{enumerate} [(i)]
      \item \emph{Perfect prediction}, namely $\A_{n+1} = \conv \S_{n+1}$, and
      \item \emph{Persistent prediction}, namely $\A_{n+1} = \conv \S_{n}$.
  \end{enumerate}
  
  The performance metric considered is the \emph{accumulated error} defined recursively by
  \begin{equation} \label{eqn:e_n}
  e_{n+1} = e_n + x_n - y_n.    
  \end{equation}

  We analyze the \emph{greedy algorithm} for the choice of $y_n$, namely the algorithm that chooses $y_n \in \S_n$ so that $\|e_{n+1}\|$ is minimized. That is,
  \begin{equation} \label{eqn:err_diff}
  y_n = \argmin_{y \in \S_n} \left\| e_n + x_n - y \right\| = \vor[\S_n]{e_n + x_n}
  \end{equation}
   which is the closest point to $e_n + x_n$ in $\S_n$. This algorithm is also known as \emph{error diffusion}.
  
  As mentioned in the introduction, our goal in this paper is to:
  \begin{enumerate}
      \item Find conditions on the collection $\SS$ under which the accumulated error $e_n$ is bounded for all $n$.
      \item Propose computational methods to find tight bounds.
  \end{enumerate}

\section{Main Results}
In this section, we present our main results for the two cases in the prediction step 3(c) of Algorithm \ref{alg:inter}. The proofs are deferred to Section \ref{sec:proofs}.

\subsection{Perfect Prediction} \label{sec:perf}
Recall from Algorithm \ref{alg:inter} that since $\A_{n} = \conv \S_{n}$, the setpoint request $x_n$ lies in $\conv \S_{n}$, while the local controller implements a setpoint $y_n$ according to \eqref{eqn:err_diff}. Therefore,  the dynamics for the accumulated error variable $e_n$ \eqref{eqn:e_n} is given by
\begin{equation} \label{eqn:e_n_perfect}
e_{n+1} = e_n + x_n - \vor[\S_n]{e_n + x_n}.
\end{equation}
Similarly to \cite{adler2005, tresser2007}, 
for any non-empty 
set $\S \subset \reals^d$ and any $x \in \conv{\S}$ we define the map
\begin{align*}
G_{\S,x }:  \reals^d &\rightarrow \reals^d  \\
e & \mapsto e + x - \vor[\S]{e + x}.
\end{align*}
The dynamics for the accumulated error \eqref{eqn:e_n_perfect} can be then expressed as
  \begin{equation} \label{eqn:err_diff_dyn}
  e_{n+1} = G_{\S_n,x_n }(e_n).    
  \end{equation}

\begin{definition}[Invariance] \label{def:invariance}
We say that a set $Q \subseteq \reals^d$ is $G$-invariant with respect to a 
set $\S \subset \reals^d$
if
\[
\forall \, x \in \P, \quad G_{\S,x}(Q) \subseteq Q.
\]
We say that $Q$ is $G$-invariant with respect to a collection $\SS$ if it is $G$-invariant with respect to every $\S \in \SS$. 
\end{definition}

\begin{remark}
Our definition of invariance is a special case of
\emph{robust positively invariant sets}  in robust set-invariance theory \cite{bla99}.
Indeed, for any dynamical system $s_{n+1} = f(s_n, w_n)$, a robust positively invariant set is any set $\Omega$ that satisfies $f(\Omega, w) \subseteq \Omega$ for all $w \in \mathbb{W}$, where $\mathbb{W}$ is a set of all possible disturbances. In our case, we can view the pair $(\S_n, x_n)$ as a disturbance $w_n$ that lies in the set
\[
\mathbb{W} := \{(\S, x): \, \S \in \SS, x \in \conv \S \}.
\]
\end{remark}

The following observation makes the connection between invariant sets and boundedness of the accumulated error.
\begin{observation} \label{prop:inv_bound}
Let $\SS$ be a collection of subsets of $\reals^d$. Consider the dynamics \eqref{eqn:err_diff_dyn}, where $\S_n \in \SS$ for all $n$. Let $Q$ be a $G$-invariant set with respect to the collection $\SS$. Then, if $e_0 \in Q$, it holds that $e_n \in Q$ for all $n \geq 1$.
\end{observation}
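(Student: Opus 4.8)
The plan is to establish the claim by a straightforward induction on $n$, using the $G$-invariance of $Q$ at each step. The base case $n=0$ is immediate, since $e_0 \in Q$ is assumed.

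For the inductive step, I would suppose $e_n \in Q$ for some $n \geq 0$ and aim to conclude $e_{n+1} \in Q$. By the dynamics \eqref{eqn:err_diff_dyn} we have $e_{n+1} = G_{\S_n,x_n}(e_n)$, where $\S_n \in \SS$. The one point that needs checking before invoking invariance is that Definition~\ref{def:invariance} actually applies to the pair $(\S_n,x_n)$: the definition requires the map $G_{\S_n,x}$ to be evaluated at an argument $x \in \conv \S_n$. In the perfect-prediction setting this holds because the central controller chooses $x_n \in \A_n = \conv \S_n$ (step 3(c) together with step~4 of Algorithm~\ref{alg:inter}). Hence $x_n \in \conv \S_n$, and the hypotheses of $G$-invariance with respect to the set $\S_n \in \SS$ are met.

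Given this, I would invoke $G$-invariance of $Q$ with respect to the collection $\SS$: since $\S_n \in \SS$ and $x_n \in \conv \S_n$, we have $G_{\S_n,x_n}(Q) \subseteq Q$. Applying this to $e_n \in Q$ yields
\[
e_{n+1} = G_{\S_n,x_n}(e_n) \in G_{\S_n,x_n}(Q) \subseteq Q,
\]
which closes the induction and establishes $e_n \in Q$ for all $n \geq 1$ (indeed for all $n \geq 0$).

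There is no substantial obstacle here; the statement is essentially an unwinding of the definition of invariance, combined with the fact that the ``disturbance'' pair $(\S_n,x_n)$ always lies in the admissible set $\mathbb{W}$. The only step worth stating carefully is the membership $x_n \in \conv \S_n$, which is precisely what the perfect-prediction assumption guarantees and what makes the map $G_{\S_n,x_n}$ (and hence the invariance condition applied to it) well-posed at every time step.
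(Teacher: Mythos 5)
Your proof is correct and is exactly the argument the paper has in mind: the paper states this as an unproved Observation precisely because it is the immediate induction you write out, with the only point requiring care being that $x_n \in \conv \S_n$ (guaranteed by perfect prediction), so that the invariance condition of Definition~\ref{def:invariance} applies to the pair $(\S_n, x_n)$ at every step. No gaps.
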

In particular, it follows from Observation \ref{prop:inv_bound} that if $Q$ is a bounded invariant set that contains the origin, and $e_0 \in Q$, then the accumulated error is bounded for all $n$ by $\max_{v \in Q} \|v\|$.

Our next goal is to find \emph{minimal} $G$-invariant sets in order to obtain \emph{tight} bounds for the accumulated error. In particular, we provide conditions for the existence of \emph{bounded} minimal $G$-invariant sets, and a method to compute these sets. To that end, we first define the following set-operators similarly to \cite{nowicki2004}.

\begin{definition}[Set operators induced by a collection of sets] \label{def:g}
Fix $d \in \natnum$.
For any 
set 
$\S \subseteq \reals^d$, define:
\[
\mathfrak{g}_\S(Q):=\bigcup_{c \in \S } \vsect{(\P+Q)} -c.
\]
For a collection $\SS$, let
\[
\mathfrak{g}_\SS(Q):=\bigcup_{\S \in \SS} \mathfrak{g}_\S(Q).
\]
\end{definition}

We have the following alternative characterization of invariance in terms of the operator $\mathfrak{g}_\SS$.

\begin{proposition} \label{prop:inv_mult}
\begin{enumerate}[(i)]
\item[]
\item For any $Q \subseteq \reals^d$, 
\[
Q \subseteq \mathfrak{g}_\SS(Q).
\]
\item $Q \subseteq \reals^d$ is $G$-invariant with respect to $\SS$ if and only if 
\[
\mathfrak{g}_\SS(Q) \subseteq Q .
\]
\end{enumerate}
\end{proposition}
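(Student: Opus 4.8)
\emph{Plan.} The whole proposition rests on a single dictionary between the two descriptions of the dynamics, so the first thing I would establish is that identity. The key observation is that membership $w\in V_{\S}(c)$ says \emph{precisely} that $c$ is a closest point of $\S$ to $w$; unwinding the Minkowski sum $\P+Q$ and the union over $c$ then gives
\[
\mathfrak{g}_\S(Q)=\bigl\{\,(x+e)-c \;:\; x\in\P,\; e\in Q,\; c\in\arg\min_{\rho\in\S}\|\rho-(x+e)\|\,\bigr\}.
\]
In words, $\mathfrak{g}_\S(Q)$ is exactly the set of values $G_{\S,x}(e)=e+x-\vor[\S]{e+x}$ as $e$ ranges over $Q$, $x$ over $\P$, and $\vor[\S]{\cdot}$ over every admissible closest-point (tie-breaking) rule. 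Once this identity is in place, both parts reduce to bookkeeping on top of it, except for part~(i), which needs one genuine idea.

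\emph{Part (ii).} The direction ``$\mathfrak{g}_\SS(Q)\subseteq Q\Rightarrow$ invariance'' is immediate from the dictionary: given $\S\in\SS$, $x\in\P$ and $e\in Q$, put $c=\vor[\S]{e+x}$ and $w=e+x$. Then $w\in(\P+Q)\cap V_{\S}(c)=\vsect{(\P+Q)}$, so $w-c=G_{\S,x}(e)\in\mathfrak{g}_\S(Q)\subseteq\mathfrak{g}_\SS(Q)\subseteq Q$, which is exactly $G_{\S,x}(Q)\subseteq Q$. For the converse I would take $z\in\mathfrak{g}_\SS(Q)$, write $z=(x+e)-c$ with $c$ a closest point of some $\S\in\SS$ to $x+e$, and observe that $z=G_{\S,x}(e)$ for the projection rule selecting that particular $c$; invariance then yields $z\in Q$. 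The only subtlety is that $\mathfrak{g}$ ranges over \emph{all} closest points whereas $G_{\S,x}$ fixes one tie-breaking rule, so I would read $G$-invariance as robustness over all admissible closest-point selections (ties occupy only the Voronoi boundaries, where the choice is ambiguous); with that reading the converse is clean.

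\emph{Part (i).} This is where the work is. Since $\mathfrak{g}_\SS(Q)=\bigcup_{\S\in\SS}\mathfrak{g}_\S(Q)$, it suffices to show $Q\subseteq\mathfrak{g}_\S(Q)$ for a single (indeed every) $\S\in\SS$. Fix $e\in Q$; by the dictionary I must exhibit $x\in\P$ and a closest point $c$ of $\S$ to $x+e$ with $(x+e)-c=e$, which forces $x=c\in\S$. So the task collapses to finding $c\in\S$ that is a closest point of $\S$ to $c+e$. My proposed choice is the support maximizer $c^\ast\in\arg\max_{c\in\S}\langle c,e\rangle$, which exists because $\S$ is compact. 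To check that $c^\ast$ is closest to $c^\ast+e$, I would verify $\|e\|\le\|e-(c'-c^\ast)\|$ for every $c'\in\S$; squaring, this is equivalent to $2\langle e,\,c'-c^\ast\rangle\le\|c'-c^\ast\|^2$, and here the left-hand side is $\le 0$ by maximality of $c^\ast$ while the right-hand side is $\ge 0$. Hence $c^\ast+e\in V_{\S}(c^\ast)$, and since $c^\ast+e\in\P+Q$ (take $x=c^\ast\in\S\subseteq\P$ and $e\in Q$), we obtain $c^\ast+e\in\vsect[c^\ast]{(\P+Q)}$ and therefore $e=(c^\ast+e)-c^\ast\in\mathfrak{g}_\S(Q)$.

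\emph{Main obstacle.} The routine half is part~(ii) together with the $\Leftarrow$ bookkeeping. The genuinely non-obvious step is the support-function trick in part~(i) that produces a self-projecting point $c^\ast$, which is what guarantees $Q\subseteq\mathfrak{g}_\SS(Q)$ and relies on compactness of $\S$ for the maximizer to exist. I would also flag the tie-breaking interpretation of $G$-invariance in the $\Rightarrow$ direction of part~(ii) as the one place where the statement must be read carefully.
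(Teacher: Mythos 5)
Your proof is correct and follows essentially the same route as the paper: part (i) reduces to exhibiting, for each $e\in Q$, a point $c\in\S$ that is a closest point of $\S$ to $c+e$ (the paper's Lemma~\ref{lem:A_in_g} asserts this existence without justification, and your support-maximizer argument with $2\langle e, c'-c^\ast\rangle \le \|c'-c^\ast\|^2$ is exactly the computation it leaves implicit), while part (ii) is the same dictionary between elements of $\mathfrak{g}_\S(Q)$ and values of $G_{\S,x}$, carried out set-by-set over $\SS$. Your explicit handling of the tie-breaking ambiguity on Voronoi-cell boundaries in the $\Rightarrow$ direction addresses a point the paper also passes over silently.
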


The next two theorems show how to find minimal invariant sets and provide conditions on their boundedness.

\begin{theorem}[Minimal Invariant Set] \label{theo:min_G}
Let $Q \subseteq \reals^d$. The iterates
\[
\mathfrak{g}^n_\SS (Q) := \mathfrak{g}_\SS (\mathfrak{g}^{n-1}_\SS (Q)) \quad \text{for } n\in \natnum, n \geq 1, \quad \mathfrak{g}^0_\SS (Q) := Q,
\]
are monotonic, in the sense that $\mathfrak{g}^n_\SS (Q) \subseteq \mathfrak{g}^{n'}_\SS (Q)$ for all $n \leq n'$, and the limit set 
\[
\mathfrak{g}^\infty_\SS (Q) := \lim_{n \rightarrow \infty} \mathfrak{g}^n_\SS (Q) = \bigcup_{n\geq0} \mathfrak{g}^n_\SS(Q)
\]
is the \emph{minimal $G$-invariant set} with respect to $\SS$ that contains $Q$.
\end{theorem}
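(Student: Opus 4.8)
The plan is to recognize this as a least-fixed-point construction in the spirit of the Kleene/Knaster--Tarski theorem: I will argue that $\mathfrak{g}_\SS$ is an order-preserving, union-continuous operator on the lattice of subsets of $\reals^d$, and that $\mathfrak{g}^\infty_\SS(Q)$ is its least fixed point above $Q$. First I would establish \emph{monotonicity of the operator}: if $Q_1 \subseteq Q_2$ then $\mathfrak{g}_\SS(Q_1) \subseteq \mathfrak{g}_\SS(Q_2)$. This follows by inspecting \refdef{g} one operation at a time — the Minkowski sum $\P + (\cdot)$, the intersection with each Voronoi cell $V_\S(c)$, the translation by $-c$, and the two unions (over $c \in \S$ and over $\S \in \SS$) each preserve set inclusion. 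Combining this with \refprop{inv_mult}(i), which gives $Q \subseteq \mathfrak{g}_\SS(Q)$, a straightforward induction yields $\mathfrak{g}^n_\SS(Q) \subseteq \mathfrak{g}^{n+1}_\SS(Q)$ for every $n$: the base case is \refprop{inv_mult}(i), and applying the monotone map $\mathfrak{g}_\SS$ to the inductive hypothesis advances it. Hence the iterates form an increasing chain, which is exactly the claimed monotonicity $\mathfrak{g}^n_\SS(Q) \subseteq \mathfrak{g}^{n'}_\SS(Q)$ for $n \le n'$, and for an increasing chain the set-limit coincides with the union $\bigcup_{n\ge0}\mathfrak{g}^n_\SS(Q)$, which I denote by $Q^*$.

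Next I would show that $Q^*$ is $G$-invariant, which by \refprop{inv_mult}(ii) amounts to $\mathfrak{g}_\SS(Q^*) \subseteq Q^*$. The key lemma here is that $\mathfrak{g}_\SS$ \emph{commutes with the increasing union}, i.e.\ $\mathfrak{g}_\SS\bigl(\bigcup_n A_n\bigr) = \bigcup_n \mathfrak{g}_\SS(A_n)$. This holds because each constituent operation distributes over unions: $\P + \bigcup_n A_n = \bigcup_n (\P + A_n)$, intersection with a fixed cell distributes over unions, translation by $-c$ is a bijection, and the outer union over $\S \in \SS$ may be interchanged with the union over the chain. Applying this with $A_n = \mathfrak{g}^n_\SS(Q)$ gives $\mathfrak{g}_\SS(Q^*) = \bigcup_{n\ge0}\mathfrak{g}^{n+1}_\SS(Q) = \bigcup_{n\ge1}\mathfrak{g}^n_\SS(Q) \subseteq Q^*$. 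Together with \refprop{inv_mult}(i) this in fact upgrades the inclusion to an equality $\mathfrak{g}_\SS(Q^*) = Q^*$, so $Q^*$ is a genuine fixed point.

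For minimality, let $R$ be any $G$-invariant set with $Q \subseteq R$, so that $\mathfrak{g}_\SS(R) \subseteq R$ by \refprop{inv_mult}(ii). Induction using monotonicity shows $\mathfrak{g}^n_\SS(Q) \subseteq R$ for all $n$: the base case is $Q \subseteq R$, and $\mathfrak{g}^{n+1}_\SS(Q) = \mathfrak{g}_\SS(\mathfrak{g}^n_\SS(Q)) \subseteq \mathfrak{g}_\SS(R) \subseteq R$ advances it. Taking the union gives $Q^* \subseteq R$, so $Q^*$ is the smallest $G$-invariant set containing $Q$, as claimed.

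The step I expect to require the most care is the union-continuity used for invariance: one must verify that \emph{each} ingredient of $\mathfrak{g}_\SS$ genuinely distributes over the directed union, and in particular that the nested unions (over the chain, over $c \in \S$, and over $\S \in \SS$) may be freely reordered. A related subtlety is the paper's standing convention that ``set'' means ``closed set'': the union of an increasing chain of closed sets need not be closed, so either the set operators are to be read purely set-theoretically (without any closure), or one argues that in the regimes of interest — for instance a finite collection $\SS$ with bounded iterates — the chain stabilizes at a finite stage, so that the union is attained in finitely many steps and therefore remains closed. I would flag this point explicitly, since a silent closure operation would break the union-continuity that the invariance argument relies on.
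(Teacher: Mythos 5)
Your proof is correct and follows essentially the same route as the paper's: the paper likewise derives the increasing chain from the extensivity and monotonicity of $\mathfrak{g}_\SS$ (Proposition~\ref{prop:inv_mult}(i) and Lemma~\ref{lem:g_prop_mult}), establishes invariance of the limit by pushing $\mathfrak{g}_\SS$ through the union so that $\mathfrak{g}_\SS(Q^*)=\bigcup_{n\geq 1}\mathfrak{g}^n_\SS(Q)\subseteq Q^*$, and proves minimality by the same induction against an arbitrary invariant superset. Your explicit verification of union-continuity of each constituent operation and your caveat about closedness of the increasing union address points the paper passes over silently (its additivity lemma is stated for two sets but applied to a countable union), so they strengthen rather than alter the argument.
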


\begin{theorem}[Bounded Invariant Set] \label{theo:bound_G}
If the collection \SS is such that $\conv{\SS} := \{\conv{\S}, \S \in \SS \}$ is a collection of polytopes such that:
\begin{enumerate}[(i)]
    \item The sizes of the polytopes are uniformly bounded;
    \item The set $\mathcal{N}$ of outgoing normals to the faces of the polytopes is finite; and
    \item The bounded Voronoi cells of $\SS$ are uniformly bounded;
\end{enumerate}
then the minimal $G$-invariant set with respect to $\SS$ that contains the origin is \emph{bounded}. 
\end{theorem}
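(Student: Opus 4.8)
The plan is to reduce the statement to the construction of a single bounded $G$-invariant set containing the origin. By Theorem~\ref{theo:min_G}, the minimal $G$-invariant set containing $\{0\}$ is $\mathfrak{g}^\infty_\SS(\{0\})=\bigcup_{n\ge 0}\mathfrak{g}^n_\SS(\{0\})$. Hence, if I exhibit any bounded set $Q^\ast\supseteq\{0\}$ that is $G$-invariant with respect to $\SS$, then, since $\mathfrak{g}_\SS$ is monotone with respect to set inclusion (immediate from its definition through $\P+Q$) and $Q^\ast$ is invariant, Proposition~\ref{prop:inv_mult}(ii) gives $\mathfrak{g}^n_\SS(\{0\})\subseteq\mathfrak{g}^n_\SS(Q^\ast)\subseteq Q^\ast$ for every $n$, so $\mathfrak{g}^\infty_\SS(\{0\})\subseteq Q^\ast$ is bounded. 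Thus the whole theorem follows from the existence of such a $Q^\ast$, and I would spend the rest of the proof building it and checking its invariance through the operator $\mathfrak{g}_\SS$.

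The geometric engine is the identity $G_{\S,x}(e)=z-c$ with $z=e+x$ and $c=\vor[\S]{z}$, so that the image always lies in the translated Voronoi cell $V_\S(c)-c$. I would split the points $c\in\S$ according to whether $V_\S(c)$ is bounded or not. If $V_\S(c)$ is bounded, condition (iii) bounds $\|z-c\|$ by a constant $R_{\mathrm{vor}}$ uniform over the whole collection, so such steps can never move $e$ outside a fixed ball. The delicate points are those $c\in\partial\P$ whose Voronoi cell is unbounded; its recession directions are positively generated by the outer face-normals of $\P$ at $c$, all of which belong to the finite set $\mathcal{N}$ by condition (ii). This is exactly why I expect $Q^\ast$ to be, not a ball, but the polytope $Q^\ast=\{e\in\reals^d:\langle\nu,e\rangle\le\beta_\nu\ \text{for all}\ \nu\in\mathcal{N}\}$, with thresholds $\beta_\nu$ to be fixed below; boundedness of $Q^\ast$ follows because the face-normals of the full-dimensional polytopes positively span $\reals^d$, and $0\in Q^\ast$ provided each $\beta_\nu\ge 0$.

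To verify $\mathfrak{g}_\SS(Q^\ast)\subseteq Q^\ast$ I would check, for each $\nu\in\mathcal{N}$, each $\S$, each $x\in\P$ and each $e\in Q^\ast$, that the image $e'=e+x-c$ still satisfies $\langle\nu,e'\rangle\le\beta_\nu$, writing $\langle\nu,e'\rangle=\langle\nu,e\rangle+\langle\nu,x-c\rangle$. The clean case is when $c$ lies on the face of $\P$ with outer normal $\nu$: then $\langle\nu,c\rangle=\max_{w\in\P}\langle\nu,w\rangle\ge\langle\nu,x\rangle$, so $\langle\nu,x-c\rangle\le 0$ and the $\nu$-component does not increase. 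The substantive step is to show that this favourable case is forced whenever $\langle\nu,e\rangle$ is near its maximum $\beta_\nu$: if $z=e+x$ is far enough out in direction $\nu$ while being laterally confined by the other constraints $\langle\nu',e\rangle\le\beta_{\nu'}$ and by the uniform polytope size from condition (i), then the nearest point $\vor[\S]{z}$ must lie on the $\nu$-face. Choosing each $\beta_\nu$ larger than $R_{\mathrm{vor}}$ and larger than this lateral threshold simultaneously closes the argument, the finiteness of $\mathcal{N}$ guaranteeing that finitely many such choices suffice uniformly over the infinite collection $\SS$.

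I expect the \emph{main obstacle} to be precisely this last quantitative claim: establishing a uniform threshold such that large $\langle\nu,z\rangle$ forces $\vor[\S]{z}$ onto the $\nu$-face. It is here that all three hypotheses must be combined: condition (i) bounds $\|x-c\|$ and the extent of the unbounded Voronoi cells transverse to their recession cone, condition (ii) reduces the analysis to finitely many normal directions so the thresholds can be taken uniform, and condition (iii) disposes of the bounded-cell case. Getting the transverse estimate right for non-convex $\S$, where $x\in\P$ need not lie in $\S$ so that $\|z-c\|$ can exceed the distance from $z$ to $\P$, is the technical heart; the remainder is bookkeeping with support functions together with the monotone-iteration reduction of the first paragraph.
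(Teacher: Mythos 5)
Your opening reduction is sound and matches the paper's: exhibit one bounded $G$-invariant set containing the origin, and minimality (Theorem~\ref{theo:min_G} together with Proposition~\ref{prop:inv_mult}) forces $\mathfrak{g}^\infty_\SS(\{0\})$ inside it. Your treatment of the points with bounded Voronoi cells via condition (iii) is also essentially what the paper does. The divergence is in how the bounded invariant set is obtained for the boundary points: the paper does not construct it from scratch, but partitions each $\S$ into boundary points $\S_c \subseteq \partial\conv\S$ and inner points $\S_{nc}$, invokes the results of Adler et al.\ and Tresser to obtain, for the collection $\SS_c$, a bounded convex invariant set $Q_r$ containing an arbitrarily large ball, and then uses $V_\S(c)\subseteq V_{\S_c}(c)$ plus condition (iii) to show that the inner points' contribution to $\mathfrak{g}_\S(Q_r)$ is a bounded set swallowed by $\mathfrak{g}_{\S_c}(Q_r)=Q_r$ once $r$ is large enough.

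The genuine gap in your proposal is the step you yourself flag as the ``main obstacle'': the uniform quantitative claim that if $\langle\nu,z\rangle$ is large and $z$ is laterally confined, then $c=\vor[\S]{z}$ lies on the $\nu$-face, so that $\langle\nu,x-c\rangle\le 0$. This is not a technical remainder to be filled in by bookkeeping --- it \emph{is} the theorem, i.e.\ the content of the construction in \cite{adler2005,tresser2007} that the paper cites rather than reproves. Moreover, the claim as you state it is too strong to hold uniformly: since $\S$ need not be finite, it may contain boundary points on faces adjacent to the $\nu$-face with $\langle\nu,c\rangle$ arbitrarily close to, but strictly below, $\max_{w\in\P}\langle\nu,w\rangle$; such a point can be the nearest point to $z$ for arbitrarily large $\langle\nu,z\rangle$, so no finite threshold forces the projection onto the $\nu$-face itself. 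What one can and must show instead is a quantitative bound on $\langle\nu,x-c\rangle$ that decays appropriately, and it is exactly here that the finiteness of $\mathcal{N}$ and the uniform size bound combine in a nontrivial way (this is why the cited proofs are delicate). As written, your argument asserts the conclusion of that analysis without supplying it, so the proof is incomplete; either carry out the support-function estimate in full or, as the paper does, reduce to the boundary-point case and invoke the known construction.
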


In addition, we have the following variants of Theorems \ref{theo:min_G} and \ref{theo:bound_G} on the existence of minimal \emph{convex} invariant sets.

\begin{definition}[Convex set operator induced by a collection of sets] \label{def:g_conv}
Fix $d \in \natnum$.
For any 
set 
$\S \subseteq \reals^d$, define:
\[
\mathfrak{G}_\S(Q):=\conv{\left(\mathfrak{g}_\S(Q)\right)} = \conv{\left( \bigcup_{c \in \S } \vsect{(\conv{\S}+Q)} -c\right)}.
\]
For a collection $\SS$, let
\[
\mathfrak{G}_\SS(Q):=\bigcup_{\S \in \SS} \mathfrak{G}_\S(Q).
\]
\end{definition}

\begin{proposition} \label{prop:inv_conv}
A \emph{convex} set $Q \subseteq \reals^d$ is invariant with respect to $\SS$ if and only if 
\[
Q = \mathfrak{G}_\SS(Q)
\]
\end{proposition}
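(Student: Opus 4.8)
The plan is to deduce this proposition directly from \refprop{inv_mult} together with the defining identity $\mathfrak{G}_\S = \conv \circ \, \mathfrak{g}_\S$, using convexity of $Q$ only to collapse the convex hull. First I would record a one-line observation valid for \emph{every} (not necessarily convex) $Q$: since $\mathfrak{g}_\S(Q) \subseteq \conv(\mathfrak{g}_\S(Q)) = \mathfrak{G}_\S(Q)$ for each $\S \in \SS$, taking the union over $\S \in \SS$ gives $\mathfrak{g}_\SS(Q) \subseteq \mathfrak{G}_\SS(Q)$. Combined with part (i) of \refprop{inv_mult}, namely $Q \subseteq \mathfrak{g}_\SS(Q)$, this yields $Q \subseteq \mathfrak{G}_\SS(Q)$ unconditionally. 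Consequently the asserted equality $Q = \mathfrak{G}_\SS(Q)$ is equivalent to the single inclusion $\mathfrak{G}_\SS(Q) \subseteq Q$, and the whole proof reduces to characterizing when that inclusion holds.

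Next I would show that, for convex $Q$, the inclusion $\mathfrak{G}_\SS(Q) \subseteq Q$ is in turn equivalent to $\mathfrak{g}_\SS(Q) \subseteq Q$. The direction $\mathfrak{G}_\SS(Q) \subseteq Q \Rightarrow \mathfrak{g}_\SS(Q) \subseteq Q$ is immediate from $\mathfrak{g}_\SS(Q) \subseteq \mathfrak{G}_\SS(Q)$ established above. For the converse, suppose $\mathfrak{g}_\SS(Q) \subseteq Q$, so that $\mathfrak{g}_\S(Q) \subseteq Q$ for every $\S \in \SS$. This is the only place convexity enters: since $Q$ is convex and contains $\mathfrak{g}_\S(Q)$, it contains the smallest convex set doing so, i.e.\ $\mathfrak{G}_\S(Q) = \conv(\mathfrak{g}_\S(Q)) \subseteq Q$. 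Taking the union over $\S \in \SS$ gives $\mathfrak{G}_\SS(Q) \subseteq Q$.

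Finally I would invoke part (ii) of \refprop{inv_mult}, which states that $Q$ is $G$-invariant with respect to $\SS$ if and only if $\mathfrak{g}_\SS(Q) \subseteq Q$. Chaining the equivalences, $Q \text{ invariant} \iff \mathfrak{g}_\SS(Q) \subseteq Q \iff \mathfrak{G}_\SS(Q) \subseteq Q \iff Q = \mathfrak{G}_\SS(Q)$, completes the argument. I do not anticipate a genuine obstacle here, since the substantive content is already carried by \refprop{inv_mult} and the present statement is essentially its convex refinement. The only point requiring care is the union structure of $\mathfrak{G}_\SS(Q) = \bigcup_{\S \in \SS} \mathfrak{G}_\S(Q)$, which is generally nonconvex; accordingly the argument must establish $\mathfrak{G}_\S(Q) \subseteq Q$ for each $\S$ separately (invoking $\conv Q = Q$) before taking the union, rather than manipulating a single global hull.
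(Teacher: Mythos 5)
Your proof is correct and follows essentially the same route as the paper: both reduce the statement to the $\mathfrak{g}_\SS$-characterization of invariance in \refprop{inv_mult} and use convexity of $Q$ only to absorb the convex hull of each $\mathfrak{g}_\S(Q)$. The paper phrases the $(\Leftarrow)$ direction as a contradiction via explicit convex combinations for a single $\S$ (leaving the union over $\SS$ implicit), whereas you invoke minimality of the convex hull directly and treat the union over $\SS$ explicitly --- a cosmetic difference only.
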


\begin{theorem}[Minimal Convex Invariant Set] \label{theo:min_G_conv}
Let $Q \subseteq \reals^d$. The following statements hold:
\begin{enumerate}
    \item[(i)] The iterates
\begin{align*}
\mathfrak{G}^n_\SS (Q) &:= \mathfrak{G}_\SS (\mathfrak{G}^{n-1}_\SS (Q)) \quad \text{for } n\in \natnum, n \geq 1,\\ \mathfrak{G}^0_\SS (Q)& := \conv Q,
\end{align*}
are monotonic, in the sense that $\mathfrak{G}^n_\SS (Q) \subseteq \mathfrak{G}^{n'}_\SS (Q)$ for all $n \leq n'$, and the limit set 
\[
\mathfrak{G}^\infty_\SS (Q) := \lim_{n \rightarrow \infty} \mathfrak{G}^n_\SS (Q) = \bigcup_{n\geq0} \mathfrak{G}^n_\SS(Q)
\]
is the \emph{minimal convex invariant set} with respect to $\SS$ that contains $Q$.
\item[(ii)] Under the conditions of Theorem \ref{theo:bound_G}, $\mathfrak{G}^\infty_\SS (\{0\})$ is a bounded set.
\end{enumerate}
\end{theorem}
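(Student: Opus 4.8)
\emph{Part (i).} The plan is to run the same monotone fixed-point iteration used for the non-convex operator in Theorem~\ref{theo:min_G}, now for $\mathfrak{G}_\SS$. First I would record that $\mathfrak{G}_\SS$ produces convex sets and in fact factors as $\mathfrak{G}_\SS=\conv\circ\,\mathfrak{g}_\SS$, since $\conv\bigl(\bigcup_{\S\in\SS}\conv\mathfrak{g}_\S(Q)\bigr)=\conv\bigl(\bigcup_{\S\in\SS}\mathfrak{g}_\S(Q)\bigr)=\conv \mathfrak{g}_\SS(Q)$. From this factorization three structural properties follow from the corresponding ones of $\mathfrak{g}_\SS$: (a) \emph{monotonicity}, i.e. $A\subseteq B\Rightarrow \mathfrak{G}_\SS(A)\subseteq\mathfrak{G}_\SS(B)$, inherited from the monotonicity of Minkowski sum, of intersection with a fixed Voronoi cell, of translation, of union, and of $\conv$; (b) the \emph{inflationary} bound $\conv Q\subseteq \mathfrak{G}_\SS(\conv Q)$, obtained by applying \refprop{inv_mult}(i) with $R=\conv Q$ to get $R\subseteq\mathfrak{g}_\SS(R)$ and then $\mathfrak{g}_\SS(R)\subseteq\conv \mathfrak{g}_\SS(R)=\mathfrak{G}_\SS(R)$; and (c) \emph{continuity on increasing chains}, i.e. $\mathfrak{G}_\SS\bigl(\bigcup_n A_n\bigr)=\bigcup_n\mathfrak{G}_\SS(A_n)$, because both $\mathfrak{g}_\SS$ (an image under a fixed map of a Minkowski sum with a fixed set) and $\conv$ commute with increasing unions, the latter since $\conv\bigl(\bigcup_n A_n\bigr)=\bigcup_n\conv A_n$ for a chain.

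Given these, part (i) is routine. From (a) and (b), an induction starting at $\mathfrak{G}^0_\SS(Q)=\conv Q$ yields the monotonic chain $\mathfrak{G}^n_\SS(Q)\subseteq\mathfrak{G}^{n+1}_\SS(Q)$. Since every iterate is convex and an increasing union of convex sets is convex, the limit $\mathfrak{G}^\infty_\SS(Q)=\bigcup_n\mathfrak{G}^n_\SS(Q)$ is convex. Using (c), $\mathfrak{G}_\SS(\mathfrak{G}^\infty_\SS(Q))=\bigcup_n\mathfrak{G}_\SS(\mathfrak{G}^n_\SS(Q))=\bigcup_n\mathfrak{G}^{n+1}_\SS(Q)=\mathfrak{G}^\infty_\SS(Q)$, so the limit is a fixed point; being convex, \refprop{inv_conv} shows it is invariant. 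Minimality is the standard induction: if $R$ is any convex invariant set with $Q\subseteq R$, then $\conv Q\subseteq R$ and, since $R=\mathfrak{G}_\SS(R)$ by \refprop{inv_conv} and $\mathfrak{G}_\SS$ is monotone, $\mathfrak{G}^n_\SS(Q)\subseteq R$ for all $n$, hence $\mathfrak{G}^\infty_\SS(Q)\subseteq R$.

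\emph{Part (ii).} Here I would leverage Theorem~\ref{theo:bound_G} rather than redo the contraction estimate. The cleanest route is to exhibit a \emph{single} bounded convex invariant set $B$ containing the origin and then invoke the minimality just proved, giving $\mathfrak{G}^\infty_\SS(\{0\})\subseteq B$. The natural candidate is the bounded invariant set furnished under the hypotheses of Theorem~\ref{theo:bound_G}: under conditions (i)--(iii) that set can be taken to be a polytope whose facet normals lie in the finite set $\mathcal{N}$, hence convex, so $B$ is immediate. If instead one works from the minimal non-convex invariant set $\mathfrak{g}^\infty_\SS(\{0\})$ (which Theorem~\ref{theo:bound_G} shows is bounded), then it suffices to bound $\conv \mathfrak{g}^\infty_\SS(\{0\})$, and the right tool is a support-function argument: for every direction $\nu$ one has $h_{\conv A}(\nu)=h_A(\nu)$, so the per-direction bounds established in the proof of Theorem~\ref{theo:bound_G} (which use only the finitely many normals in $\mathcal{N}$, the uniform polytope sizes, and the bounded Voronoi cells) are unaffected by convexification and continue to bound $\mathfrak{G}^n_\SS(\{0\})$ uniformly in $n$.

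The main obstacle is exactly this transfer of boundedness in part (ii). Since $G_{\S,x}$ is only \emph{piecewise} a translation (it jumps across Voronoi-cell boundaries), $\mathfrak{g}_\SS$ applied to a convex hull may reach outside it, so the convex hull of an invariant set need not be invariant; consequently one cannot simply assert $\mathfrak{G}^\infty_\SS(\{0\})=\conv \mathfrak{g}^\infty_\SS(\{0\})$ and read off boundedness from Theorem~\ref{theo:bound_G}. The way around this is either (a) to use the convex bounded invariant set already available from the construction underlying Theorem~\ref{theo:bound_G} together with the minimality of part (i), or (b) to observe that the only quantities controlled in the boundedness estimate are support functions in the finitely many directions $\mathcal{N}$, which do not distinguish a set from its convex hull. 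Checking that the estimate of Theorem~\ref{theo:bound_G} indeed factors through these support functions is the one step that requires real care.
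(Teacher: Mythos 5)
Your proof is correct and takes essentially the same route as the paper: part (i) is the monotone fixed-point iteration (invariance of the limit via additivity of $\mathfrak{g}_\SS$ over increasing unions together with \refprop{inv_conv}, and minimality by the standard induction), and part (ii) is settled exactly as in the paper, namely by observing that the set $Q_r$ constructed in the proof of Theorem~\ref{theo:bound_G} is a bounded \emph{convex} $G$-invariant set containing the origin and then invoking the minimality established in part (i), so your alternative support-function route is not needed. The only point worth flagging is that you read $\mathfrak{G}_\SS$ as $\conv \circ\, \mathfrak{g}_\SS$, whereas \refdef{g_conv} literally sets $\mathfrak{G}_\SS(Q)=\bigcup_{\S\in\SS}\mathfrak{G}_\S(Q)$ with no outer hull; your reading is the one consistent with the definition of $\mathfrak{P}_\SS$ in \refdef{p} and with the claimed convexity of the limit, and the argument is unaffected.
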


We note that the choice between the convex iteration of Theorem \ref{theo:min_G_conv} and the original iteration of Theorem \ref{theo:min_G} reflects the trade-off between a) performing convex hull at every iteration to keep the number of vertices describing the iterate at a minimum, and thereby lowering the cost of computing the rest of each iteration (intersection, union, and Minkowski-sum operations), and b) not computing convex hull in each step (hence saving this computational cost), at the expense of having a (possibly) non-convex iterate and a potential increase in the number of vertices needed to represent it, which in turn could lead to an increased computational cost of the iteration as a whole.








\subsubsection{Computational Method for Computing an Invariant Set}
The iteration of Theorem \ref{theo:min_G_conv} can 
be turned into a computational method (an algorithm that does not necessarily terminate)
by augmenting the iteration with the stopping rule that corresponds to the invariance property (\refprop{inv_conv}): ending the iteration when the vertex-representation of $\mathfrak{G}^{n+1}_\SS (Q)$ equals that of $\mathfrak{G}^{n}_\SS (Q)$.

We have implemented the method for the special case of point sets in C++ with the help of the CGAL library \cite{cgal}, the source code is available online \cite{boumangit}.
To prevent loss of precision during the iterations, and to be able to perform exact equality tests, we use exact rational arithmetic, instead of floating-point arithmetic.
%
%
Note that this choice restricts all vertices to have coordinates in $\mathbb{Q}$.

A problem with exact rational arithmetic is that 
a vertex coordinate might \emph{approach} a
mixed number (a sum of an integer and a proper fraction) whose fractional part has small numerator and denominator (by ``small'' we mean just a few digits),
like $1/2$, $5\tfrac13$, $300\tfrac56$, etc., but never reach it in finite time. 
To mitigate this problem, we apply, in every iteration, the following conditional rounding function to each coordinate of every vertex of $\mathfrak{G}^{n}_\SS (Q)$:
for a given $\epsilon \in \mathbb{R}$ and a finite set of proper fractions with small numerators and denominators $\mathcal{X}\subset\mathbb{Q}$, we define
\begin{align*}
\mathcal{R}:  \mathbb{Q} &\rightarrow \mathbb{Q} \\
q & \mapsto \begin{cases}
\lfloor q \rfloor + t & \text{if } |t- (q\!\mod 1) | \leq \epsilon, \\
q & \text{otherwise},
\end{cases}
\end{align*}
where  $t:= \argmin_{x \in \mathcal{X}}|t-(q\!\mod 1)|$.

Recall that it immediately follows from the stopping rule that if the method converges, it means that it has found an invariant set. Hence, we may in principle perturb the set in an arbitrary way after each iteration in an attempt to aid convergence. The ``rounding trick'' outlined above works well in practice. 

Note, however, that by perturbing the set 
$\mathfrak{G}^{n}_\SS (Q)$ (through $\mathcal{R}$) during the iteration, we cannot guarantee anymore that the method finds the \emph{minimal} invariant set. Nonetheless, if the method converges  and  
coordinate rounding occurs \emph{only} just before convergence, in other words, if 
the last ``rounding-free'' iterate is $\delta$-close (measured by a suitable metric for sets)   
to the invariant set found by the method, then that invariant set is 
a $\delta$-close approximation to the minimal invariant set, by the monotonicity the iterates.

An additional benefit of applying $\mathcal{R}$ 
is that the method is likely to  find an invariant set whose vertex-coordinates have small representation. 


\subsubsection{Numerical Examples in $\mathbb{R}^2$}
In a first example, we let $\mathbb{S} = \{ \mcal{S} \}$, with $\mcal{S} = \mcal{X}\times \mcal{Y}$, where $\mcal{X} = \{-1,1,3,5\}$ and $\mcal{Y}:=\{-1,1\}$, i.e., eight points on a rectangular grid. \reffig{reg} shows these vertices and the minimal invariant error set (in gray), which was found by our computational method (based on Theorem~\ref{theo:min_G_conv}) after one iteration.

In our second example, we consider  
$\mathbb{S}' = \{ \mcal{S}_1,  \mcal{S}_2,  \mcal{S}_3 \}$, with 
\begin{align*}
\mcal{S}_1 &:= \{ (-1, -1),(0, -1),(1, -1),(1, 0),(1, 1),(0, 1),(-1, 1), (-1, 0) \}, \\
\mcal{S}_2 &:= \mcal{S}_1 \setminus \{(0,-1)\}, \\ 
\mcal{S}_3 &:= \mcal{S}_2 \setminus \{(-1,-1)\}.
\end{align*}
In words: the set $\mcal{S}_1$ is a collection of points that are placed equidistantly on a rectangle; see \reffig{reggrid}, and the set $\mcal{S}_2$ and $\mcal{S}_3$ respectively are created by removing one resp. two points from $\mcal{S}_1$. In particular, it holds that $\mcal{S}_3 \subset \mcal{S}_2 \subset \mcal{S}_1$. This could correspond to a setting  in practice where the local controller can implement points from $\mcal{S}_1$ most of the time, but once in a while one particular setpoint, (and sometimes even an additional particular setpoint) becomes temporarily infeasible. 

The minimal invariant error set, shown in \reffig{invset}, has $7$ vertices and was found after $177$ iterations, with $\epsilon = 10^{-8}$ as rounding parameter.
From this figure, we see that the minimal invariant error set corresponding to $\mathbb{S}'$ (the ``joint'' error set) is significantly larger than the minimal invariant error sets corresponding to singletons $\mathbb{S}''=\{\mcal{S}_i\}$ for all $i\in \{1,2,3\}$. Hence, if having a small invariant error-set is of central importance in an application, then one could, in this particular example, decide to only use the setpoints in $\mcal{S}_3$ at the cost of providing a smaller feasible set (on average) to the central controller.

\begin{figure}
  \centering
  \includegraphics{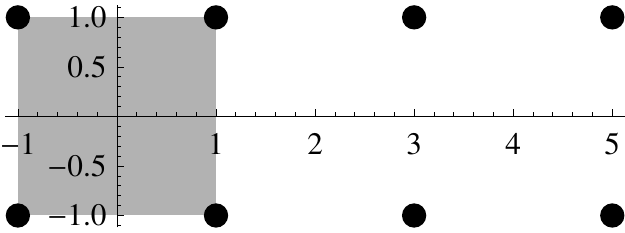}
\caption{A single-point-set example. The minimal invariant error set is shown in light gray.} \label{fig:reg}
\end{figure}

\begin{figure}
  \centering
  \includegraphics{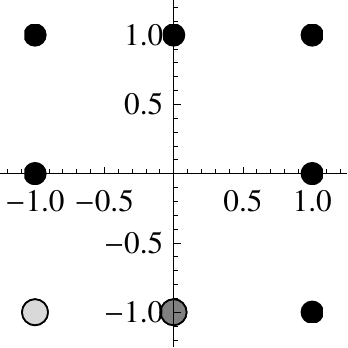}
\caption{The set of setpoints $\mcal{S}_1$ (all points). The gray and lightgray points indicate the setpoints that are removed from $\mcal{S}_1$ to construct $\mcal{S}_2$ and $\mcal{S}_3$.} \label{fig:reggrid}
\end{figure}

\begin{figure}
  \centering
  \includegraphics{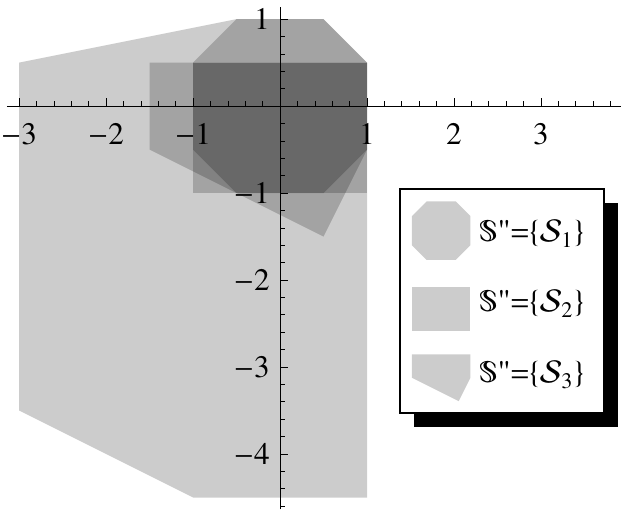}
\caption{The minimal invariant error set for  $\mathbb{S}'\!=\!\{\mcal{S}_1,\mcal{S}_2,\mcal{S}_3\}$; a convex polygon with corner-vertices 
$\{
 (-3 , -\frac{7}{2} ),
 (-1 , -\frac{9}{2} ),
 (1 , -\frac{9}{2} ),
 (1 , \frac{1}{2} ),
 (\frac{1}{2} , 1 ),
 (-\frac{1}{2} , 1 ),
 (-3 , \frac{1}{2}
\}$. In the same figure, the invariant error sets are drawn corresponding to the cases  where $\mathbb{S}''=\{\mcal{S}_i\}$, for all $i \in \{1,2,3\}$. Those error sets are much smaller than the ``joint'' invariant error set.} \label{fig:invset}
\end{figure}

\subsection{Persistent Prediction} \label{sec:persist}
We now analyze the case of using a persistent predictor in the local controller (step 3(c) of Algorithm \ref{alg:inter}), where the advertised predicted feasible set is given by $\A_{n+1} = \conv \S_n$. We then have the following error dynamics
\[
e_{n+1} = e_n + x_n - \vor[\S_n]{e_n + x_n}
\]
where $x_n \in \A_n = \conv{\S_{n-1}}$ and $\S_n, \S_{n-1} \in \SS$. 
Observe that this dynamics involves a pair of sets rather then a single set, and hence 
the previous results cannot be applied directly. Fortunately, we can reformulate this dynamics in terms of the modified request $z_n = e_n + x_n$ similarly to the approach in \cite{adler2005,nowicki2004}. For this new state variable, we have that
\begin{equation} \label{eqn:x_dyn}
z_{n+1} = z_n + x_{n+1} - \vor[\S_n]{z_n},
\end{equation}
where $x_{n+1} \in \A_{n+1} = \conv{\S_{n}}$. Thus, the following operator can be defined.

\begin{definition}
For any finite non-empty  set $\S \subset \reals^d$ and any $x \in \conv{\S}$ we define the map
\begin{align*}
F_{\S,x }:  \reals^d &\rightarrow \reals^d  \\
z & \mapsto z + x - \vor[\S]{z}
\end{align*}
\end{definition}

The dynamics \eqref{eqn:x_dyn} can be then expressed as
\begin{equation} \label{eqn:x_dyn2}
z_{n+1} = F_{\S_n,x_{n+1}}(z_n)
\end{equation}
and $F$-invariance with respect to $\SS$ is given by Definition \ref{def:invariance} by replacing $G$ with $F$.

The following proposition makes the connection between $F$-invariant sets and boundedness of the accumulated error.
\begin{proposition} \label{prop:inv_bound_domain}
Let $\SS$ be a collection of subsets of $\reals^d$. Consider the dynamics \eqref{eqn:x_dyn2}, where $\S_n \in \SS$ for all $n$. Let $D$ be an $F$-invariant set with respect to the collection $\SS$, and assume that $z_0 \in D$.
\begin{enumerate}
    \item[(i)] Then it holds that 
    $$\|e_n\| \leq \max_{\S \in \SS} \max_{v \in D - \conv \S} \|v\|$$ 
    for all $n \geq 1$.
    \item[(ii)] If in addition $\S \subseteq D$ for all $\S \in \SS$, we have that $\|e_n\| \leq \diam D$ for all $n \geq 1$.
\end{enumerate}
\end{proposition}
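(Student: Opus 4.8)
The plan is to exploit the change of variables $z_n = e_n + x_n$ already used to derive the clean dynamics~\eqref{eqn:x_dyn2}, apply the hypothesis of $F$-invariance of $D$ directly in the $z$-coordinate, and then translate the resulting containment back into a bound on $e_n$ via $e_n = z_n - x_n$.

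First I would show, by induction on $n$, that $z_n \in D$ for every $n \geq 0$. The base case is the assumption $z_0 \in D$. For the inductive step, at step $n$ the dynamics reads $z_{n+1} = F_{\S_n, x_{n+1}}(z_n)$ with $\S_n \in \SS$ and $x_{n+1} \in \conv \S_n$; since $D$ is $F$-invariant with respect to $\SS$ (Definition~\ref{def:invariance} with $G$ replaced by $F$), i.e.\ $F_{\S,x}(D) \subseteq D$ for every $\S \in \SS$ and every $x \in \conv \S$, the inductive hypothesis $z_n \in D$ immediately gives $z_{n+1} \in D$.

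For part~(i) I would simply invert the substitution. For $n \geq 1$ we have $z_n \in D$ from the induction and $x_n \in \A_n = \conv \S_{n-1}$ by the persistent-prediction rule, so $e_n = z_n - x_n \in D - \conv \S_{n-1} \subseteq \bigcup_{\S \in \SS}(D - \conv \S)$. Taking norms yields $\|e_n\| \leq \max_{\S \in \SS}\max_{v \in D - \conv \S}\|v\|$, as claimed.

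For part~(ii) the extra hypothesis $\S \subseteq D$ for all $\S \in \SS$ gives $\conv \S_{n-1} \subseteq \conv D$, hence $x_n \in \conv D$, while $z_n \in D \subseteq \conv D$; thus both points lie in $\conv D$ and $\|e_n\| = \|z_n - x_n\| \leq \diam(\conv D)$. The step that requires the most care is precisely here: $x_n$ need not belong to $D$ itself, only to $\conv D$, so one cannot read off $\|e_n\| \leq \diam D$ directly. I would close this gap with the elementary fact that $\diam(\conv D) = \diam D$ for any bounded $D$ (the unbounded case being vacuous): the Euclidean distance is convex in each argument, so its maximum over $\conv D \times \conv D$ is attained at a pair of extreme points of $\conv D$, and every such extreme point lies in $D$. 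This is the only non-bookkeeping ingredient; everything else follows directly from invariance together with the change of variables.
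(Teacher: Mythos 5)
Your proof is correct. Part (i) matches the paper's argument exactly: the inductive containment $z_n \in D$ from $F$-invariance, followed by $e_n = z_n - x_n \in D - \conv{\S_{n-1}}$. For part (ii), however, you take a genuinely different decomposition of the error. You keep writing $e_n = z_n - x_n$ and observe that $x_n \in \conv{\S_{n-1}} \subseteq \conv{D}$, which forces you to pass through $\diam(\conv{D})$ and then invoke the (true, and correctly justified) fact that $\diam(\conv{D}) = \diam{D}$ via extreme points. The paper instead rewrites the error as
\[
e_{n+1} = z_n - \vor[\S_n]{z_n},
\]
i.e., it uses the \emph{projected} point rather than the \emph{requested} point as the second endpoint. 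Since $\vor[\S_n]{z_n} \in \S_n \subseteq D$ by the extra hypothesis, both endpoints lie in $D$ itself and the bound $\|e_{n+1}\| \leq \diam{D}$ is immediate, with no convex-hull lemma needed. Your route costs one additional (elementary) lemma but is otherwise equivalent; the paper's choice of endpoint is the cleaner one precisely because the hypothesis $\S \subseteq D$ constrains $\S$, not $\conv{\S}$, so it pays to land on a point of $\S_n$ rather than of $\conv{\S_{n-1}}$.
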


\begin{proof}
Part (i) of the proposition follows trivially by the invariance of $D$, and the fact that $e_n = z_n - x_n$ for $z_n \in D$ and $x_n \in \conv \S$ for some $\S \in \SS$.

For part (ii), observe that
\[
e_{n+1} = x_n + e_n - \vor[\S_n]{x_n + e_n} = z_n - \vor[\S_n]{z_n}
\]
where both $z_n \in D$ and $\vor[\S_n]{z_n} \in \S_n \subseteq D$. Hence, $\|e_{n+1}\| \leq \diam D$.
\end{proof}

We next state our main result that provides conditions for the existence of bounded minimal $F$-invariant sets, and a method to compute these sets. Hence we obtain tight bounds for the accumulated error in the case of persistent prediction. 
To that end, we first define the following set-operators similarly to \cite{nowicki2004}.

\begin{definition}[Set operators] \label{def:p}
Fix $d \in \natnum$.
For any set $\S \subseteq \reals^d$, define:
\[
\mathfrak{p}_\S(D):=\P + \bigcup_{c \in \S } \vsect{D} -c, \quad \mathfrak{P}_\S(D):=\conv{\left(\mathfrak{p}_\S(D)\right)}.
\]
Also, for a collection \SS, let
\[
\mathfrak{p}_\SS (D):=\bigcup_{\S \in \SS} \mathfrak{p}_\S(D), \quad \mathfrak{P}_\SS (D):=\conv{\left(\mathfrak{p}_\SS (D)\right)}.
\]
Also, define the iterates of the above operators, $\mathfrak{p}^n_\S(D), \mathfrak{P}^n_\S(D), \mathfrak{p}^n_\SS(D)$ and $\mathfrak{P}^n_\SS(D)$, as before.
\end{definition}

\begin{proposition} \label{prop:inv_mult_p}
\begin{enumerate}
\item[]
    \item[(i)] For any set $D \subseteq \reals^d$, $D \subseteq \mathfrak{p}_\SS(D)$, and $D$ is $F$-invariant with respect to $\SS$ if and only if 
\[
\mathfrak{p}_\SS(D) \subseteq D.
\]
\item[(ii)] For any \emph{convex} set $D \subseteq \reals^d$, $D \subseteq \mathfrak{P}_\SS(D)$,  and convex $D$ is $F$-invariant with respect to $\SS$ if and only if 
\[
\mathfrak{P}_\SS(D) \subseteq D.
\] 
\end{enumerate}
\end{proposition}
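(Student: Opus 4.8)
The plan is to reduce both parts to a single description of the one-step image of $D$ under all admissible maps $F_{\S,x}$, and then to read off the two characterizations almost mechanically, exactly paralleling the role that \refprop{inv_mult} plays in the $G$-setting. First I would rewrite the image set. Since $F_{\S,x}(z)=\l(z-\vor[\S]{z}\r)+x$, taking the union over all admissible requests $x\in\conv\S$ gives
\[
\bigcup_{x\in\conv\S}F_{\S,x}(D)=\conv\S+\l\{\,z-\vor[\S]{z}:z\in D\,\r\}.
\]
Because the selected projection $\vor[\S]{z}$ is always \emph{a} nearest point of $z$, we have $z\in V_\S(\vor[\S]{z})$, so $z-\vor[\S]{z}$ lies in $\bigcup_{c\in\S}(\vsect{D}-c)$, and hence the right-hand side is contained in $\mathfrak p_\S(D)=\conv\S+\bigcup_{c\in\S}(\vsect D-c)$. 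Unioning over $\S\in\SS$ yields the containment $\bigcup_{\S\in\SS}\bigcup_{x\in\conv\S}F_{\S,x}(D)\subseteq\mathfrak p_\SS(D)$, which drives the easy directions below.

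Two of the three claims follow immediately. For $D\subseteq\mathfrak p_\SS(D)$, fix any $\S\in\SS$ and $z\in D$ and set $c:=\vor[\S]{z}\in\S$; then $z\in\vsect D$, so $z=c+(z-c)\in\conv\S+(\vsect D-c)\subseteq\mathfrak p_\SS(D)$ (this is just the fixed-point identity $F_{\S,c}(z)=z$). For the ``$\Leftarrow$'' direction of the invariance characterization in (i): if $\mathfrak p_\SS(D)\subseteq D$, then by the containment above $F_{\S,x}(D)\subseteq\mathfrak p_\SS(D)\subseteq D$ for every $\S\in\SS$ and $x\in\conv\S$, so $D$ is $F$-invariant. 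Part (ii) then follows from (i) using only monotonicity of the convex hull and $\conv D=D$ for convex $D$: from $D\subseteq\mathfrak p_\SS(D)$ we get $D\subseteq\conv(\mathfrak p_\SS(D))=\mathfrak P_\SS(D)$; and $\mathfrak P_\SS(D)\subseteq D$ forces $\mathfrak p_\SS(D)\subseteq D$ hence $F$-invariance by (i), while conversely $F$-invariance of convex $D$ gives $\mathfrak p_\SS(D)\subseteq D$ by (i), so $\mathfrak P_\SS(D)=\conv(\mathfrak p_\SS(D))\subseteq\conv D=D$.

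The remaining and genuinely delicate step is the ``$\Rightarrow$'' direction of (i): assuming $D$ is $F$-invariant (and closed, by the standing convention) I must show $\mathfrak p_\SS(D)\subseteq D$. A point of $\mathfrak p_\S(D)$ has the form $q=p+d-c$ with $p\in\conv\S$, $c\in\S$, and $d\in\vsect D$, so $c$ is \emph{a} nearest point of $d$. If $c=\vor[\S]{d}$, then $q=F_{\S,p}(d)\in D$ directly. I expect the main obstacle to be the case where $c$ is a nearest point of $d$ \emph{different} from the tie-broken value $\vor[\S]{d}$: here the reverse inclusion is not a formal set identity, because $\mathfrak p_\S$ ranges over \emph{all} nearest points (the closed Voronoi cells overlap on their boundaries) while the dynamics uses only the single selected one. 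I would resolve this by a limiting argument using closedness of $D$. Using the admissible request $x_\delta:=(1-\delta)\vor[\S]{d}+\delta c\in\conv\S$ produces $z_\delta:=F_{\S,x_\delta}(d)=d+\delta\l(c-\vor[\S]{d}\r)\in D$, and a direct computation of $\tfrac{d}{d\delta}\l(\|z_\delta-c\|^2-\|z_\delta-\vor[\S]{d}\|^2\r)=-2\|c-\vor[\S]{d}\|^2<0$ at $\delta=0$ shows that for all small $\delta>0$ the point $z_\delta$ is strictly closer to $c$ than to $\vor[\S]{d}$, so that $\vor[\S]{z_\delta}=c$.

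Invariance then gives $F_{\S,p}(z_\delta)=z_\delta+p-c\in D$, and letting $\delta\downarrow0$ we obtain $z_\delta+p-c\to d+p-c=q$, whence $q\in D$ since $D$ is closed. Degenerate configurations, in which some further point of $\S$ is simultaneously nearest to $z_\delta$, are handled by an additional perturbation of the request within $\conv\S$ (or are excluded, since they correspond to a lower-dimensional boundary set). This establishes $\mathfrak p_\SS(D)\subseteq D$ and completes part (i), and with it the whole proposition.
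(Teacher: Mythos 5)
Your treatment of the two easy inclusions is correct and is essentially what the paper does (the paper reduces \refprop{inv_mult_p} to the single-set statement \refprop{inv_p}, imported from \cite{nowicki2004}, and then takes unions over $\S\in\SS$; your derivation of (ii) from (i) via monotonicity of the convex hull also matches the paper's argument for \refprop{inv_conv_p}). The identity $\bigcup_{x\in\conv{\S}}F_{\S,x}(D)=\conv{\S}+\{z-\vor[\S]{z}:z\in D\}\subseteq\mathfrak{p}_\S(D)$ and the resulting ``$\Leftarrow$'' direction are fine, as is $D\subseteq\mathfrak{p}_\SS(D)$ via the fixed point $F_{\S,c}(z)=z$ with $c=\vor[\S]{z}$.

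The gap is in the ``$\Rightarrow$'' direction of (i), precisely at the step you flag as delicate. Showing that $z_\delta=d+\delta(c-\vor[\S]{d})$ is strictly closer to $c$ than to $v:=\vor[\S]{d}$ does \emph{not} give $\vor[\S]{z_\delta}=c$: when $d$ is equidistant from three or more sites, a third site $c'\in\S$ can be \emph{strictly} nearer to $z_\delta$ than $c$ is. Concretely, take $\S=\{c_1,c_2,c_3\}$ with $c_1=(-1,0)$, $c_2=(1,0)$, $c_3=(0,\tfrac1{10})$ and $d$ the circumcenter; with $v=c_1$ and $c=c_3$ one computes $\|z_\delta-c_3\|^2-\|z_\delta-c_2\|^2=2\delta\langle c_2-c_3,\,c_3-c_1\rangle>0$, so $\vor[\S]{z_\delta}=c_2$ and the limit of $F_{\S,p}(z_\delta)$ is $d+p-c_2$, not the target $d+p-c_3$. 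Nor is the ``additional perturbation of the request'' escape obviously available: the displacements reachable from $d$ in one step are exactly $\conv{\S}-v$, which in this example is a cone of directions disjoint from the (narrow, upward-pointing) cone of directions entering the interior of $V_\S(c_3)$ at $d$; so no admissible request lands in a region where $c_3$ is the selected nearest point, and these configurations are not a negligible ``lower-dimensional'' nuisance that can be waved away. To be fair, the paper never confronts this either: its proof of the analogous statement (\refprop{inv_mult} via Proposition~\ref{prop:inv}) simply asserts that every element of $\vsect{(\P+A)}-c$ has the form $x-\vor[\S]{x}$, i.e., it implicitly treats the projection as set-valued and ignores tie-breaking. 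If one adopts that convention (invariance demanded for every choice of nearest point), the hard direction is immediate and your limiting argument is unneeded; with a fixed selection, as the paper's definitions literally read, your argument does not close the gap and would need a genuinely different idea (e.g., a multi-step reachability argument or a restatement of invariance for all selections).
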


\begin{theorem} \label{thm:main_uncertain}
Let \SS be a collection of sets $\S \subset \reals^d$, and let $D$ be any given set. 
The following statements hold:
\begin{enumerate}
    \item[(i)] The iterates $\mathfrak{p}^n_\SS(D), \mathfrak{P}^n_\SS(D)$ are monotonically non-decreasing, and the sets
\[
\mathfrak{p}^\infty_\SS(D) := \lim_{n \rightarrow \infty} \mathfrak{p}^n_\SS(D) \quad \text{and} \quad \mathfrak{P}^\infty_\SS(D) := \lim_{n \rightarrow \infty} \mathfrak{P}^n_\SS(D)
\]
are the minimal $F$-invariant set and the minimal \emph{convex} $F$-invariant set, respectively, containing the set $D$. 
\item[(ii)] Under the conditions of Theorem \ref{theo:bound_G}, both $\mathfrak{p}^\infty_\SS(\{0\})$ and $\mathfrak{P}^\infty_\SS(\{0\})$ are bounded sets.
\end{enumerate}
\end{theorem}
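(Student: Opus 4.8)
The plan is to mirror, for the persistent-prediction operators of Definition~\ref{def:p}, the two-step template already used for Theorems~\ref{theo:min_G} and~\ref{theo:min_G_conv}: first establish the order-theoretic facts that make the iterates a monotone chain whose union is the least fixed point, and then, for part~(ii), reduce the boundedness question to the geometric machinery behind Theorem~\ref{theo:bound_G}. The whole argument rests on the two structural facts supplied by Proposition~\ref{prop:inv_mult_p}: the operators are \emph{expansive} ($D\subseteq\mathfrak{p}_\SS(D)$, and $D\subseteq\mathfrak{P}_\SS(D)$ on convex $D$) and their sub-fixed points are exactly the $F$-invariant sets ($\mathfrak{p}_\SS(D)\subseteq D$, resp.\ $\mathfrak{P}_\SS(D)\subseteq D$).

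For part~(i) I would first check that $\mathfrak{p}_\SS$ and $\mathfrak{P}_\SS$ are \emph{monotone} (order preserving): each is built from Minkowski sum, intersection with fixed Voronoi cells, translation, union, and --- in the convex case --- convex hull, all of which preserve set inclusion. Combining monotonicity with expansiveness and an easy induction gives $\mathfrak{p}^n_\SS(D)\subseteq\mathfrak{p}^{n+1}_\SS(D)$ (and likewise for $\mathfrak{P}$), so the iterates form a non-decreasing chain and the limits equal the unions $\bigcup_{n\ge 0}\mathfrak{p}^n_\SS(D)$. That the limit is invariant follows because each operator commutes with directed (nested) unions: for $\mathfrak{p}_\SS$ this is immediate, since a point of $\mathfrak{p}_\SS(D)$ is produced from a \emph{single} point of $D$ (namely $z\mapsto z-\vor[\S]{z}$, translated by a point of $\conv{\S}$), so any $w\in\mathfrak{p}_\SS(\mathfrak{p}^\infty_\SS(D))$ already lies in some $\mathfrak{p}^{N+1}_\SS(D)$; for $\mathfrak{P}_\SS$ one additionally notes that a point of a convex hull is a convex combination of \emph{finitely many} points, all of which sit in a common finite iterate by monotonicity. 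Minimality is then routine: if $Q\supseteq D$ is ($F$-, resp.\ convex $F$-) invariant, then $\mathfrak{p}^0_\SS(D)=D\subseteq Q$ (resp.\ $\mathfrak{P}^0_\SS(D)=\conv{D}\subseteq Q$), and monotonicity together with the sub-fixed-point characterization propagates $\mathfrak{p}^n_\SS(D)\subseteq Q$ through the induction, whence the union lies in $Q$.

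For part~(ii) the key is the elementary identity, read off from Definition~\ref{def:p}, that
\[
\mathfrak{p}_\S(D)=\conv{\S}+\{\,z-\vor[\S]{z} : z\in D\,\},
\]
to be compared with $\mathfrak{g}_\S(Q)=\{\,z-\vor[\S]{z} : z\in\conv{\S}+Q\,\}$ from Definition~\ref{def:g}. In words, one step of $\mathfrak{p}$ first forms the \emph{one-step error} $z-\vor[\S]{z}$ and then translates it by the (uniformly bounded, by condition~(i)) polytope $\conv{\S}$. Boundedness of $\mathfrak{p}^\infty_\SS(\{0\})$ therefore reduces to showing that these one-step errors cannot escape to infinity under iteration, which is precisely the phenomenon controlled in Theorem~\ref{theo:bound_G}: condition~(iii) bounds $z-\vor[\S]{z}$ whenever $z$ lies in a bounded Voronoi cell, while condition~(ii), the finiteness of the face-normal set $\mathcal{N}$, supplies the finitely many ``escape directions'' along the unbounded Voronoi cells together with the supporting-hyperplane (normal-cone) argument showing that along each such direction $u$ the increment $x_{n+1}-\vor[\S_n]{z_n}$ has non-positive $u$-component, i.e.\ acts as a restoring force. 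Packaging this into a single bounded $F$-invariant set $D^\ast\ni 0$ (respectively a bounded \emph{convex} one, built from the bounded convex $G$-invariant set $\mathfrak{G}^\infty_\SS(\{0\})$ of Theorem~\ref{theo:min_G_conv}) and invoking the minimality from part~(i) yields $\mathfrak{p}^\infty_\SS(\{0\})\subseteq D^\ast$ and $\mathfrak{P}^\infty_\SS(\{0\})\subseteq\conv{D^\ast}$, both bounded.

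I expect the main obstacle to be part~(ii), and specifically the fact that this reduction cannot be carried out as a clean \emph{set-level} comparison with the bounded $\mathfrak{g}$-iteration. Writing $z=e+x$ with $e$ an accumulated error and $x\in\conv{\S_{n-1}}$, the request now lives in the convex hull of the \emph{previous} feasible set, so the error recursion is a $G$-step with a mismatched request set; consequently the naive candidate $D=\bigcup_{\S}(\conv{\S}+\mathfrak{g}^\infty_\SS(\{0\}))$ fails to be $F$-invariant because of cross terms of the form $\{\,w-\vor[\S_0]{w} : w\in\conv{\S}+\mathfrak{g}^\infty_\SS(\{0\})\,\}$ with $\S\neq\S_0$, and moreover $e$ and $x$ may individually be large while $z$ stays bounded. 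The resolution is to argue boundedness directly in the $z$-coordinate via the restoring-force estimate above (equivalently, to enlarge the perfect-prediction system so that the requests for a set $\S$ may range over every $\conv{\S'}$, which leaves the hypotheses on polytope sizes, face-normals, and Voronoi cells intact), so that the mechanism of Theorem~\ref{theo:bound_G} applies essentially verbatim; verifying that this enlargement preserves conditions~(i)--(iii) is the one point that needs care.
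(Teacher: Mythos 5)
Your proposal is correct and follows essentially the same route as the paper: part (i) is established by exactly the monotone/expansive/additive fixed-point template already used for Theorems~\ref{theo:min_G} and~\ref{theo:min_G_conv} (the paper literally states that the proof ``is exactly the same''), and part (ii) by exhibiting a bounded (convex) $F$-invariant set containing the origin and invoking the minimality from part (i). The only difference is one of emphasis: where you sketch the restoring-force argument along the finitely many face normals and flag the $\conv{\S_{n-1}}$-versus-$\S_n$ mismatch yourself, the paper simply cites \cite{adler2005, tresser2007} for the bounded convex invariant set of the corner-point collections (already in the $z$-coordinate, so the mismatch never arises) and adds only the extension to interior points via the uniformly bounded Voronoi cells.
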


\section{Application Examples and Numerical Illustration}

In this section, we give concrete examples of the design of local controllers for some important types of resources in the context of real-time control of electrical grid. 
We also perform numerical simulation of the \cl system \cite{commelec} that includes one central controller (grid agent) and several local controllers (resource agents), and show how boundedness of the accumulated error helps to achieve better overall performance of the closed-loop \cl system.

\subsection{Discrete Resource in One Dimension}
In this section, 
we show how our general results apply to local controllers that 
can only implement setpoints from a \emph{discrete} and \emph{one-dimensional} set.
As an application, we consider a heating system consisting of a finite number of heaters that each can either be switched on or off (see \refsec{building} below). In particular,  this system can only produces/consumes \emph{real power}, and thus its set of feasible setpoints is one-dimensional.
We focus here on \emph{deterministic} systems, hence we are in the case of \emph{perfect prediction}.

Consider a \emph{finite} collection $\SS = \{\S^k\}_{k \in [K]}$ of \emph{finite} non-empty subsets $\S^k \subseteq \reals$, where $K \in \natnum$. 
For any $\S \in \SS$, whose elements we label as $s_1 < s_2 < \ldots < s_{|\S|}$, we let
  \[
    \Delta_\S \defas
      \begin{cases}
        0 & \text{if $|\S|=1$}\\
         \max_{i \in [|\S|-1]} s_{i+1} - s_i & \text{if $|\S|>1$}.
  \end{cases}
  \]
denote the \term{maximum stepsize of \S}. We also let
\begin{equation} \label{eqn:collstep}
\Delta_\SS \defas \max_{\S \in \SS} \Delta_{\S}
\end{equation}
denote the \term{maximum step size of the collection $\SS$}.

\begin{theorem} \label{thm:discrete}
Let $\SS$ be a finite collection of finite non-empty subsets of $\reals$. Then the set $[-\Delta_\SS/2, \Delta_\SS/2]$ is the minimal $G$-invariant set with respect to $\SS$ (as per Definition \ref{def:invariance}) that contains the origin. Hence, if $e_0 \in [-\Delta_\SS/2, \Delta_\SS/2]$, the accumulated error under the greedy algorithm \eqref{eqn:err_diff} is bounded by $|e_n| \leq \Delta_\SS/2$ for all $n \geq 1$.
\end{theorem}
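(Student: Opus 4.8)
The plan is to reduce the whole statement to a direct computation of the set-operator $\mathfrak{g}_\SS$ from Definition~\ref{def:g} and then invoke Theorem~\ref{theo:min_G}, which identifies the minimal $G$-invariant set containing a given set with the limit of the iterates $\mathfrak{g}^n_\SS$. Writing $\Delta := \Delta_\SS$ and $I := [-\Delta/2,\Delta/2]$, it suffices to establish two facts: (a) $\mathfrak{g}_\SS(\{0\}) = I$, and (b) $\mathfrak{g}_\SS(I) = I$. Together these give $\mathfrak{g}^n_\SS(\{0\}) = I$ for every $n \geq 1$, so the limit set $\mathfrak{g}^\infty_\SS(\{0\})$ equals $I$, which by Theorem~\ref{theo:min_G} is exactly the minimal $G$-invariant set containing the origin. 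The accumulated-error bound then follows immediately from Observation~\ref{prop:inv_bound}: since $0 \in I$ and $I$ is bounded with $\max_{v\in I}|v| = \Delta/2$, any trajectory starting in $I$ stays in $I$, whence $|e_n| \leq \Delta/2$.

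The first step is to record the one-dimensional Voronoi geometry. Fix $\S \in \SS$ with elements $s_1 < \cdots < s_m$ (where $m = |\S|$) and gaps $g_i := s_{i+1}-s_i$. The Voronoi cell of an interior point $s_i$ is the interval $[\tfrac{s_{i-1}+s_i}{2},\, \tfrac{s_i+s_{i+1}}{2}]$, while the cells of the two extreme points $s_1$ and $s_m$ are the half-lines $(-\infty, \tfrac{s_1+s_2}{2}]$ and $[\tfrac{s_{m-1}+s_m}{2}, +\infty)$. Since $\P = \conv\S = [s_1, s_m]$, for $Q=\{0\}$ I would compute each shifted section $\vsect{(\P+Q)} - c$: the interior point $s_i$ contributes $[-g_{i-1}/2,\, g_i/2]$, whereas the extreme points $s_1$ and $s_m$ contribute $[0,\, g_1/2]$ and $[-g_{m-1}/2,\, 0]$ (the half-lines get truncated by $\P$). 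The crucial observation is that every one of these intervals contains the origin, so their union is a single interval; its endpoints are $\pm\tfrac12\max_i g_i = \pm\Delta_\S/2$. Taking the union over $\S \in \SS$ and using $\Delta = \max_\S \Delta_\S$ yields fact (a), namely $\mathfrak{g}_\SS(\{0\}) = I$.

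For fact (b) I would repeat the computation with $Q = I$, so that $\P + Q = [s_1-\Delta/2,\, s_m+\Delta/2]$. The interior sections are unchanged, since their Voronoi cells already lie inside $[s_1,s_m]$; but the two extreme cells, after truncation by the enlarged $\P+Q$ and the shift by $-c$, now contribute $[-\Delta/2,\, g_1/2]$ and $[-g_{m-1}/2,\, \Delta/2]$. Again each section straddles the origin, so the union is the single interval $[-\Delta/2,\Delta/2] = I$: the new outer endpoints $\pm\Delta/2$ come precisely from the two extreme cells, and no interior section reaches beyond them because every gap satisfies $g_i \leq \Delta$. Hence $\mathfrak{g}_\S(I) = I$ for each $\S$, and the union over $\SS$ gives $\mathfrak{g}_\SS(I) = I$. (The degenerate case $m=1$, where $\Delta_\S=0$ and the single Voronoi cell is all of $\reals$, is checked directly and is consistent with both formulas.)

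The only delicate point, and the step I would treat most carefully, is the bookkeeping for the two extreme (unbounded) Voronoi cells, since these are the cells genuinely reshaped by the Minkowski sum with $Q$ and whose truncation governs the outer endpoints of each iterate; everything else is routine, because each shifted section automatically contains the origin (so the unions leave no gaps) and the half-width is controlled by the single largest gap, which is $\Delta_\SS/2$ by definition. As an alternative route, one could bypass the operator computation and verify $G$-invariance directly from Definition~\ref{def:invariance} by checking $z - \vor[\S]{z} \in I$ for every $z \in \P+I$ (splitting into $z < s_1$, $z > s_m$, and $z$ in an interior cell), and argue minimality separately by exhibiting, for the $\S$ and gap attaining $\Delta$, requests $x$ that drive $e$ from $0$ to either endpoint of $I$. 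The operator route via Proposition~\ref{prop:inv_mult} and Theorem~\ref{theo:min_G} is preferable, however, since it settles invariance and minimality simultaneously.
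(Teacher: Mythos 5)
Your proposal is correct and follows essentially the same route as the paper: compute $\mathfrak{g}_\SS(\{0\})=[-\Delta_\SS/2,\Delta_\SS/2]$, show this interval is a fixed point of $\mathfrak{g}_\SS$ by the same Voronoi-cell bookkeeping (interior cells bounded by the gaps, extreme cells truncated by $\P+Q$), and conclude minimality and the error bound via Theorem~\ref{theo:min_G} and Observation~\ref{prop:inv_bound}. The only differences are cosmetic: you carry out the section-by-section computation with explicit gap notation and note the $|\S|=1$ degenerate case, whereas the paper argues via the pair of points attaining the maximal gap and containment of the remaining sections.
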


\begin{proof}
We use the iteration of Theorem \ref{theo:min_G} to show that $\mathfrak{g}^\infty_\SS (\{0\}) = [-\Delta_\SS/2, \Delta_\SS/2]$. For any $\S \in \SS$, it is easy to see that
\[
\mathfrak{g}_\S(\{0\}) = \bigcup_{c \in \S } \vsect{\P} -c = [-\Delta_\S/2, \Delta_\S/2].
\]
Indeed, consider the two points in $\SS$ that attain the maximum step size, namely $s_i$ and $s_{i+1}$ such that $s_{i + 1} - s_i = \Delta_\S$. Then
\[
\left(\P_{\{s_i\}} - s_i\right) \bigcup \left(\P_{\{s_{i+1}\}} - s_{i+1}\right) = [-\Delta_\S/2, \Delta_\S/2]
\]
and the rest of the terms $\vsect{\P} -c$  are contained in $[-\Delta_\S/2, \Delta_\S/2]$. Therefore, the first iteration yields
\[
\mathfrak{g}_\SS(\{0\}) =\bigcup_{\S \in \SS } [-\Delta_\S/2, \Delta_\S/2] = [-\Delta_\SS/2, \Delta_\SS/2] := Q.
\]

For the second iteration, for any $\S \in \SS$, consider
\begin{equation} \label{eqn:sec_iter}
\mathfrak{g}_\S^2(\{0\}) = \mathfrak{g}_\S(Q) = \bigcup_{c \in \S } \vsect{(\P + Q)} -c.
\end{equation}
Denote $\P = [s_1, s_{|\S|}]$. Observe that for any $s_i \in \S$, $i \neq 1, |\S|$, 
\begin{align*}
(\P + Q)_{\{s_i\}} &= V_{\S}(s_i) \\
&= [s_i - (s_i - s_{i-1})/2,  s_i + (s_{i+1} - s_i)/2] \\
&\subseteq [s_i - \Delta_\SS/2, s_i + \Delta_\SS/2]
\end{align*}
by the definition of the Voronoi cell of $s_i$ and of the maximal step size of the collection $\SS$. For $c = s_1$,
\begin{align*}
(\P + Q)_{\{s_1\}}  &= [s_1 - \Delta_\SS/2, s_1 +  (s_2 - s_1)/2]\\
 & \subseteq [s_1 - \Delta_\SS/2, s_1 + \Delta_\SS/2],
\end{align*}
and similarly
\begin{align*}
(\P + Q)_{\{s_{|\S|}\}}  &= [s_{|\S|} -  (s_{|\S|} - s_{|\S| - 1})/2, s_{|\S|} + \Delta_\SS/2] \\
&\subseteq [s_{|\S|} - \Delta_\SS/2, s_{|\S|} + \Delta_\SS/2].
\end{align*}
Hence, for all $c \in \S$, $\vsect{(\P + Q)} - c \subseteq [-\Delta_\SS/2, \Delta_\SS/2]$. Substituting in \eqref{eqn:sec_iter} yields
\[
\mathfrak{g}_\S^2(\{0\}) = \mathfrak{g}_\S(Q) \subseteq Q
\]
and consequently  $\mathfrak{g}_\S(Q) =  Q$ by \refprop{inv_mult}. Therefore,
\[
\mathfrak{g}_\SS^2(\{0\}) = \mathfrak{g}_\SS(Q) = \bigcup_{\S \in \SS } \mathfrak{g}_\S(Q) = Q.
\]
Thus the iteration has converged and, by Theorem \ref{theo:min_G}, $\mathfrak{g}_\SS^{\infty}(\{0\}) = Q := [-\Delta_\SS/2, \Delta_\SS/2]$ is the minimal invariant set.
\end{proof}

\subsection{Example: Resource Agent for Heating a Building} \label{sec:building}

\newcommand{\Ttgt}{\ensuremath{T_\textnormal{target}}\xspace}
\newcommand{\Tmin}{\ensuremath{T_\textnormal{min}}\xspace}
\newcommand{\Tmax}{\ensuremath{T_\textnormal{max}}\xspace}
\newcommand{\Pheat}{\ensuremath{P_\textnormal{heat}}\xspace}

In this section, we present a concrete local-controller example: we will design a local controller for managing the temperature in a building with several rooms.
The reason for showing this example is twofold.
First, we wish to give a concrete example of a local controller that
controls a load that can only
implement power setpoints from a discrete set. Second, the local-controller design shows a concrete usage example of the \cl framework \cite{commelec},
and might serve as a basis for an actual implementation.

In this section and next section, we will use the terminology of the \cl framework and denote the local controller as ``resource agent'', while the advertised  feasible set of power setpoints $\A_n$ as ``\pqprof'', where $P$ and $Q$ denote active and reactive power, respectively.

The heating system's objective is 
to keep the rooms' temperatures within a certain range.
For rooms whose temperature lies in that range, there is some freedom in the choice of the control actions related to  those rooms.
The resource agent's job is to monitor the building and spot such degrees of freedom, and expose them to the grid agent, which can then exploit those for performing \emph{Demand Response}.

Our example is inspired by \cite{costanzo}, which also considers the problem of controlling the temperature in the rooms of a building using multiple heaters.
We address two issues that were not addressed in \cite{costanzo}:
\begin{enumerate}
\item We show 
  that by rounding requested setpoints into implementable setpoints using the error diffusion algorithm
  we obtain a resource agent with bounded accumulated-error.
\item We prevent the heaters from switching on and off with the same frequency as \cl's control frequency, which is crucial in an actual implementation. 
\end{enumerate}

\subsubsection{Simple Case: a Single Heater}
For simplicity, we first analyze a scenario with only one heater.
The main aspects of our proposed design (as mentioned above) are in fact independent of the number of heaters, and we think that those aspects are more easily understood in this simple case.
We will generalize our example to an arbitrary number of heaters in \refsec{moreheaters}.

\paragraph{Model and Intended Behavior}
We model the heater as a purely resistive load (it does not consume reactive power)
that can be either active (``on'') or inactive (``off''). It consumes $P_\text{heat}>0$ Watts while being active, and zero Watts while being inactive.

From the perspective of the resource agent, 
the heater has a \emph{state} that consists of two binary variables:
$s_n \in \{0,1\}$, which corresponds to whether the heater is on ($s_n=1$) or off ($s_n=0$), and
$\ell_n \in \{0,1\}$ indicates whether the heater is ``locked'', in which case we cannot switch on or switch off the heater.
Formally, if $\ell_n = 1$ then $s_{n+1} = s_{n}$ necessarily holds.
Hence, $\ell_n$ exposes a physical constraint of the heater, namely that it cannot (or should not) be switched on and off with arbitrarily high frequency.
In the typical case where the minimum switching period of the heater is (much) larger than the \cl's control period ($\approx 100$ ms), the heater will ``lock'' immediately after a switch, i.e., assuming $\ell_n=0$, setting $s_{n+1}$ such that $s_{n+1}\neq s_n$ will induce
$\ell_{n'} = 1$ for every $n' \in [n+1, n + N]$, after which $\ell_{n + N+1} = 0$. Here, $N\in \natnum$ represents the minimum number of timesteps  for which the heater cannot change its state from on to off or \emph{vice versa}.

Suppose that the heater is placed in a room, and that the temperature of this room is a scalar quantity. (We do not aim here to model heat convection through the room or anything like that.)
The temperature in the room, denoted as $T_n$, should remain within predefined ``comfort'' bounds,
\begin{align*}
T_n \in [ \Tmin,\Tmax] \quad n=1,2,\ldots
\end{align*}
where $\Tmin,\Tmax \in \reals$.
If $T_n$ is outside this interval (and only if $\ell_n=0$), then the resource agent should take the trivial action, i.e.,
ensure that the heater is active if $T_n < \Tmin$, and
 inactive if $T_n > \Tmax$.
The more interesting case is 
if $T_n$ lies in $[\Tmin,\Tmax]$ (again, provided that $\ell_n=0$), as this gives rise to some \emph{flexibility} in the heating system:
the degree of freedom here is whether to switch the heater on or off, which obviously directly corresponds to the
total power consumed by the heating system.
The goal is to
delegate this choice to 
the grid agent, 
which we can accomplish by defining an appropriate \cl advertisement.

\paragraph{Defining the Advertisement and the Rounding Behavior} 
\label{sec:radef}
Let the discrete set of implementable real-power setpoints 
at time $n$ be defined as
\[
  \S_n\defas  \begin{cases}
  \{ 0 \} & \text{if }(\ell_n=0 \land T_n >\Tmax) \lor \\
          & \phantom{\text{if }}( \ell_n = 1 \land s_n=0),\\
    \{-P_\textnormal{heat},0\} &\text{if }\phantom{(}\ell_n=0 \land \Tmin\leq T_n \leq\Tmax, \\
 \{ -P_\textnormal{heat} \} & \text{if } (\ell_n=0 \land T_n <\Tmin) \lor \\
                           & \phantom{\text{if }}( \ell_n = 1 \land s_n=1),
  \end{cases}
\]
where $\land$ and $\lor$ stand for ``and'' and ``or'', respectively.
Note that $\S_n$ only contains non-positive numbers, 
by the convention in \cl that \emph{consuming} real power corresponds to \emph{negative} values for $P$.
We define the \pqprof as a perfect prediction of $\conv \S_n$, namely $\A_n = \conv \S_n$, and assume that the resource agent uses the error diffusion algorithm \eqref{eqn:err_diff} to implement setpoints $y_n$.
We then have the following immediate corollary of \refthm{discrete}.

\begin{corollary}
  The accumulated error of the single-heater resource agent as defined in this section is bounded by 
  $\tfrac12 \Pheat$.
  \label{cor:sing}
\end{corollary}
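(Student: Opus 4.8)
The plan is to recognize this as a direct specialization of \refthm{discrete}: I would exhibit the collection $\SS$ of feasible sets that the single-heater agent can ever advertise, verify that it satisfies the hypotheses of that theorem (a finite collection of finite, non-empty subsets of $\reals$), compute its maximum step size $\Delta_\SS$, and read off the bound $\Delta_\SS/2$. Since the section analyzes a \emph{deterministic} heater with \emph{perfect prediction} and the agent implements setpoints via the greedy rule \eqref{eqn:err_diff}, the accumulated-error dynamics are exactly those governed by the $G$-invariant-set machinery, so \refthm{discrete} applies without modification.

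First I would enumerate the possible values of $\S_n$. Reading off the case definition of $\S_n$ in \refsec{radef}, every feasible set equals one of $\{0\}$, $\{-\Pheat\}$, or $\{-\Pheat,0\}$; hence $\SS \subseteq \{\,\{0\},\{-\Pheat\},\{-\Pheat,0\}\,\}$, a finite collection of finite non-empty subsets of $\reals$, so the hypotheses of \refthm{discrete} hold. Next I would compute $\Delta_\SS$ via \eqref{eqn:collstep}: the two singletons contribute maximum step size $0$, while the two-point set $\{-\Pheat,0\}$ has its single consecutive gap of length $\Pheat$, giving step size $\Pheat$. Taking the maximum yields $\Delta_\SS = \Pheat$ (and this value is attained precisely in the ``flexible'' regime $\Tmin \le T_n \le \Tmax$, $\ell_n=0$, so the bound will be tight).

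Finally, instantiating \refthm{discrete} with the natural initialization $e_0 = 0 \in [-\Delta_\SS/2,\Delta_\SS/2]$ gives $|e_n| \le \Delta_\SS/2 = \tfrac12\Pheat$ for all $n \ge 1$, which is the claim. I do not expect any genuine obstacle here, as the statement is labelled an immediate corollary; the only points warranting care are confirming that the three listed sets exhaust all possible feasible sets (so that $\SS$ is exactly as claimed and $\Delta_\SS$ is not underestimated) and that the error is initialized inside the invariant interval so that the per-step bound of \refthm{discrete} is available from $n=1$ onward.
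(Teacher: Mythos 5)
Your proposal is correct and is exactly the argument the paper intends: the paper states this as an immediate consequence of \refthm{discrete}, and your instantiation (the collection is a subset of $\{\{0\},\{-\Pheat\},\{-\Pheat,0\}\}$, so $\Delta_\SS=\Pheat$ and the bound is $\Delta_\SS/2=\tfrac12\Pheat$) fills in precisely the details the paper leaves implicit.
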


\subsubsection{General Case: an Arbitrary Number of Heaters}
\label{sec:moreheaters}
Here, we extend the single-heater case to a setting with $r$ heaters, for $r\in \natnum, r\geq 1$ arbitrary. 
As we will see, also this multi-heater case can be analyzed using \refthm{discrete}.

Like in the single-heater case, we assume that each heater is purely resistive. We furthermore assume that heater $i$ consumes $P_i^\text{heat}$ Watts of power when active (and zero power when inactive), for every $i\in [r]$.
Also similarly to the single-heater case, we assume that each heater is placed in a separate room, whose (scalar) temperature is denoted as $T_n^{(i)}$.
Not surprisingly, our objective
shall now be to keep the temperature in each room within the predefined comfort bounds,
i.e.,
\begin{align*}
  T_n^{(i)} \in [ \Tmin,\Tmax] \quad \forall n \in \posint, \forall i \in [r].
\end{align*}

In the one-heater case, the only degree of freedom 
is the choice to switch that heater on or off.
In case of multiple heaters, there is potentially some freedom in
choosing which subset of the heaters to activate,
and note that there will typically\footnote{Provided that not too many heaters are locked.} be an exponential number of those subsets (exponential in the number of heaters).
Each subset corresponds to a certain total power consumption, i.e., a power setpoint. 
As in the single-heater case,
the \pqprof will be defined as the convex hull of the collection of these setpoints.
When the grid agent requests some setpoint from the \pqprof, the resource agent has to select an appropriate subset whose corresponding setpoint is closest (in the Euclidean sense) to the requested setpoint.
Note that there can be several subsets of heaters that correspond to the same setpoint.
A simple method to resolve this ambiguity would be, for example, to choose the subset consisting of the coldest rooms, however, as this topic is beyond the scope of this work, we leave the choice of such a selection method to the resource-agent designer.

When going from the single-heater setting to a multiple-heaters scenario,
we merely need to re-define $\S_n$, which we will name  
$\widetilde{\S}_n$ here to avoid confusion with the single-heater case. 
The definition of the \pqprof, 
and rule for computing $y_n$ given in \refsec{radef}
also apply to the multi-heater case, provided that
all occurrences of $\S_n$ in those definitions are replaced by $\widetilde{\S}_n$.

For every $i\in [r]$, let $s_n^{(i)}$ and $\ell_n^{(i)}$ represent the state variables $s_n$ and $\ell_n$ (as defined in the single-heater case) for the $i$-th heater. Let $\mcal{L}_n \defas  \{ i \in [r]: \ell_n^{(i)} = 1 \}$ denote the set of rooms whose heater is locked at timestep $n$. Furthermore, let $\mcal{C}_n\defas  \{ i \in [r]: T_n^{(i)} < \Tmin \}$
and $\mcal{W}_n\defas  \{ i \in [r]: \Tmin \leq T^{(i)}_n \leq \Tmax \} $. 
Informally speaking, $\mcal{C}_n$ contains the rooms that are ``too cold'', and $\mcal{W}_n$ the rooms whose temperatures are within the comfort bounds.

If $A \subseteq[r]$, we write $\overline{A}$ for the complement
with respect to $[r]$, i.e. $\overline{A}\defas [r]\setminus A$.

Let
\begin{equation} \label{eqn:B_n}
  \widetilde{\S}_n : = \big\{ a_n - \sum_{i \in \mcal{I} }  P_i^\textnormal{heat}  : \mcal{I} \subseteq \overline{\mcal{L}_n}\cap\mcal{W}_n \big\}
\end{equation}
represent the set of implementable (active) power setpoints, with
\[
a_n \defas  -\sum_{i \in \mcal{L}_n} s_n^{(i)} P_i^\textnormal{heat} - \sum_{j \in \overline{\mcal{L}_n} \cap \mcal{C}_n } P_j^\textnormal{heat}.
\]

\newcommand{\tS}{{\widetilde{\mathbb{S}}}}
As in the one-heater example, we use \refthm{discrete} to bound the accumulated error of the resource agent.
To this end, let $\tS$ denote the collection of all possible sets
$\widetilde{\S}_n$ \eqref{eqn:B_n}. It is easy to see that this is a finite collection. Further, the maximum stepsize of this collection \eqref{eqn:collstep} is given by $\Delta_\tS = \max_{i \in [r]} P_i^\textnormal{heat}$. 
This gives us the following corollary.

\begin{corollary}
  The accumulated error of the multiple-heaters resource agent as defined in this section is bounded by  $(\tfrac12 \max_{i \in [r]} P_i^\textnormal{heat})$. 
\end{corollary}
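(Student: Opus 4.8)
The corollary will follow immediately from Theorem~\ref{thm:discrete} applied to the collection $\tS$, once we justify the two facts asserted just before the statement: that $\tS$ is a finite collection of finite non-empty subsets of $\reals$, and that its maximum stepsize is $\Delta_\tS = \max_{i \in [r]} P_i^\textnormal{heat}$. For the first fact I would argue that each $\widetilde{\S}_n$ in \eqref{eqn:B_n} is the image, under the translation $x \mapsto a_n + x$, of the subset-sum set $\{-\sum_{i \in \mcal{I}} P_i^\textnormal{heat} : \mcal{I} \subseteq \overline{\mcal{L}_n}\cap\mcal{W}_n\}$; this set is non-empty (it contains $0$, via $\mcal{I}=\emptyset$) and has at most $2^r$ elements, hence is finite. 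Since the data determining $\widetilde{\S}_n$ --- the subsets $\mcal{L}_n,\mcal{C}_n,\mcal{W}_n$ of $[r]$ together with the locked on/off states $s_n^{(i)}$ --- range over finitely many configurations, only finitely many distinct sets $\widetilde{\S}_n$ can arise, so $\tS$ is finite and Theorem~\ref{thm:discrete} applies.

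The substantive step is to compute $\Delta_\tS$. Because the stepsize of a one-dimensional set is invariant under translation and under negation, the stepsize of each $\widetilde{\S}_n$ equals that of the pure subset-sum set $\Sigma(P) := \{\sum_{i \in \mcal{I}} p_i : \mcal{I} \subseteq J\}$, where $J = \overline{\mcal{L}_n}\cap\mcal{W}_n$ and $p_i = P_i^\textnormal{heat}$. I would therefore isolate the following claim: for any finite family of positive reals $\{p_i\}_{i \in J}$, the maximum gap between consecutive elements of the sorted set $\Sigma(P)$ equals $\max_{i\in J} p_i$. Granting this claim, maximizing over all $\widetilde{\S}_n \in \tS$ gives $\Delta_\tS = \max_{i \in [r]} P_i^\textnormal{heat}$, and Theorem~\ref{thm:discrete} yields $|e_n| \le \Delta_\tS/2 = \tfrac12 \max_{i\in[r]} P_i^\textnormal{heat}$ for all $n\ge 1$ whenever $e_0$ lies in the invariant interval (in particular for $e_0 = 0$).

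I expect the subset-sum gap claim to be the main obstacle, and I would prove its upper bound by induction on $|J|$, adding the elements in nondecreasing order of size. Writing $\Sigma(P) = \Sigma' \cup (\Sigma' + p_m)$, where $p_m$ is the largest element and $\Sigma'$ is the subset-sum set of the remaining elements, the inductive hypothesis bounds every gap internal to either copy by the previous maximum, which is at most $p_m$. For a gap straddling the two copies, between consecutive elements $a \in \Sigma'$ and $b \in \Sigma' + p_m$, I would observe that $b - p_m \in \Sigma'$ and that $b - p_m \le a$ must hold --- otherwise $b - p_m$ would be an element of $\Sigma'$ lying strictly between $a$ and $b$, contradicting consecutiveness --- so $b - a \le p_m$. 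Hence every gap is at most $p_m = \max_{i\in J} p_i$. The matching lower bound is easy: the configuration in which exactly one heater, of maximal power $P_k^\textnormal{heat} = \max_i P_i^\textnormal{heat}$, is unlocked and within the comfort band (all others locked or outside it) produces $\widetilde{\S}_n = \{a_n,\, a_n - P_k^\textnormal{heat}\}$, whose stepsize is exactly $\max_i P_i^\textnormal{heat}$; since this set lies in $\tS$, equality follows. (For the corollary's bound alone, only the upper bound $\Delta_\tS \le \max_i P_i^\textnormal{heat}$ is in fact needed.)
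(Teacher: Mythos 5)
Your route is the same as the paper's: the paper obtains this corollary by asserting that $\widetilde{\mathbb{S}}$ is a finite collection of finite subsets of $\reals$ with maximum stepsize $\Delta_{\widetilde{\mathbb{S}}}=\max_{i\in[r]}P_i^{\textnormal{heat}}$ and then invoking \refthm{discrete}; it offers no proof of these two facts beyond ``it is easy to see''. You supply the missing justification, and you correctly identify the subset-sum gap bound as the only substantive point. Two defects in your write-up are worth flagging, neither fatal. First, the isolated claim that the maximum gap of $\Sigma(P)=\{\sum_{i\in\mcal{I}}p_i:\mcal{I}\subseteq J\}$ \emph{equals} $\max_{i\in J}p_i$ is false in general: for $p=\{1,10\}$ the sorted sums are $0,1,10,11$ and the maximum gap is $9$. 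This is harmless for the corollary, since (as you note) only the inequality $\Delta_{\widetilde{\mathbb{S}}}\le\max_i P_i^{\textnormal{heat}}$ is needed, and the exact value of $\Delta_{\widetilde{\mathbb{S}}}$ asserted by the paper is recovered from your separate two-element witness $\{a_n,\,a_n-P_k^{\textnormal{heat}}\}\in\widetilde{\mathbb{S}}$ rather than from a per-set equality; but the claim should be stated as an upper bound.

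Second, your inductive case analysis is incomplete. Writing $\Sigma(P)=\Sigma'\cup(\Sigma'+p_m)$, a consecutive pair $a<b$ of the union can have $a\in(\Sigma'+p_m)\setminus\Sigma'$ and $b\in\Sigma'\setminus(\Sigma'+p_m)$; your straddling-gap argument only treats the orientation $a\in\Sigma'$, $b\in\Sigma'+p_m$. The omitted orientation does occur: for $p=\{3,4,5\}$ one has $\Sigma'=\{0,3,4,7\}$, $\Sigma'+5=\{5,8,9,12\}$, union $\{0,3,4,5,7,8,9,12\}$, and the consecutive pair $(5,7)$ is of the missing type. The repair is one line: let $c$ be the largest element of $\Sigma'$ with $c\le a$ (it exists since $0\in\Sigma'$); then no element of $\Sigma'$ lies in $(c,a]$ by maximality of $c$, and none lies in $(a,b)$ since $a,b$ are consecutive in the union, so $c$ and $b$ are consecutive in $\Sigma'$ and $b-a\le b-c\le\Delta_{\Sigma'}\le p_m$ by the inductive hypothesis. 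With that addition your argument is complete and, unlike the paper's, actually proves the stepsize formula.
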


\subsection{Example: Resource Agent for a Photovoltaic (PV) System}
\label{sec:pvexample}
Here, we explain how we can apply \refthm{main_uncertain} to
devise a local controller (or a \emph{resource agent} in the terminology of \cite{commelec})
for a PV system with bounded accumulated-error.

Let $S_\text{rated}$ and $\phi_{\max}$ denote
the rated power of the converter and angle corresponding to the minimum power factor, respectively. We suppose that these quantities are given (they correspond to physical properties of the PV system), and that $S_\text{rated}\geq 0$ and $0 \leq \phi_{\max} < \pi$. 
Note that for any power setpoint $(P,Q)$, the rated power imposes the constraint $P^2+Q^2 \leq  S^2_\text{rated}$; the angle $\phi_{\max}$ imposes that $\arctan (Q/P) \leq \phi_{\max}$.

Let us now choose $P_{\max},  \varphi \in \reals$ such that $0 \leq P_{\max} \leq S_\text{rated}$, $ 0 \leq \varphi \leq \phi_{\max}$, and $\frac{P_{\max}}{\cos \varphi} = S_\text{rated}$, and let
\[
  \mcal{T}(x) :=  \{ (P,Q)\in \reals^2 : 0 \leq P \leq x, \frac{|Q|}{P}\leq \tan \varphi \}
\]
be a triangle-shaped set in the $PQ$ plane; see \reffig{Tset} for an illustration.
Note that for any combination of $P_{\max}$ and $\varphi$, the triangle $\mcal{T}(x)$ for any $x\in [0,P_{\max}]$ is fully contained in the disk that corresponds to the rated-power constraint, and, moreover, the two upper corner points of $\mcal{T}(P_{\max})$  lie on the boundary of that disk.

Let $p^{\max}_n$ be the maximum real power available at timestep $n\in \natnum$ (typically determined by the solar irradiance). 
Using $p^{\max}_n$, we define the set of implementable points
at timestep $n$ as
\[
\S_n:= \mcal{T}(\min(p_n^{\max},P_{\max})).
\]

\begin{figure}
  \centering
  \includegraphics[scale=.8]{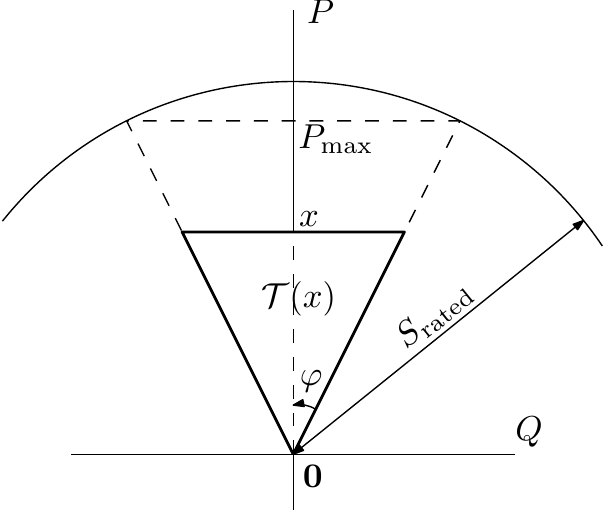}
  \caption{The parameterized collection of sets $\{ \mcal{T} (x): x\in [0,P_{\max}]\}$, and its relation to the rated-power constraint of the PV converter.}
  \label{fig:Tset}
\end{figure}

\begin{theorem} \label{thm:pv}
Let $P_{\max}$, $\varphi$, and $\S_n$ (for every $n\in \natnum$) be defined as above for a given PV system. 
    Consider a resource agent for this PV system
    that: (i) uses a persistent predictor to advertise $\A_{n+1} := \S_{n}$, and (ii) implements setpoints according to the greedy (error diffusion) algorithm. Then, $D = \mcal{T}(P_{\max})$ is the minimal $F$-invariant set with respect to $\SS = \{\mcal{T}(x) \}_{0 \leq x \leq P_{\max}}$. Therefore, if $e_0 \in D$, the accumulated error for this resource agent is bounded by
    \[
    \|e_n\| \leq \diam D = \max\l\{ \frac{P_{\max}}{\cos\varphi}, 2\, P_{\max}\tan \varphi \r\}
    \]
    for all $n \in \natnum$.
\end{theorem}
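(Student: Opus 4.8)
The plan is to realize $D=\mcal{T}(P_{\max})$ as the stable value of the iteration $\mathfrak{p}^n_\SS(\{0\})$ from \refthm{main_uncertain}; this would simultaneously give $F$-invariance and minimality, after which the bound drops out of \refprop{inv_bound_domain}. First I would compute the initial iterate. For $\S=\mcal{T}(a)$ the apex $(0,0)$ is the unique nearest point of $\mcal{T}(a)$ to the origin, so in $\bigcup_{c\in\mcal{T}(a)}\vsect[c]{\{0\}}-c$ only the term $c=(0,0)$ survives and contributes $\{0\}$; hence $\mathfrak{p}_{\mcal{T}(a)}(\{0\})=\conv\mcal{T}(a)+\{0\}=\mcal{T}(a)$, and taking the union over $a\in[0,P_{\max}]$ (using $\mcal{T}(a)\subseteq\mcal{T}(P_{\max})$) gives $\mathfrak{p}_\SS(\{0\})=\mcal{T}(P_{\max})=D$.

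The heart of the argument is to show that one more iteration stabilizes, i.e. $\mathfrak{p}_\SS(D)=D$. Fixing $a$ and using that $\mcal{T}(a)$ is convex with unique metric projection, the inner union collapses to the residual set $W_a:=\{z-\vor[\mcal{T}(a)]{z}:z\in D\}$, so $\mathfrak{p}_{\mcal{T}(a)}(D)=\mcal{T}(a)+W_a$. I would then isolate two geometric facts. The first is the Minkowski identity $\mcal{T}(a)+\mcal{T}(b)=\mcal{T}(a+b)$ for $a,b\ge 0$, immediate either from a short splitting argument (write a point of $\mcal{T}(a+b)$ with slope $r$, $|r|\le\tan\varphi$, as $a'(1,r)+b'(1,r)$) or from comparing support functions in the three facet-normal directions $(-\sin\varphi,\pm\cos\varphi)$ and $(1,0)$. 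The second, and the real content, is the inclusion $W_a\subseteq\mcal{T}(P_{\max}-a)$. Granting these, $\mathfrak{p}_{\mcal{T}(a)}(D)=\mcal{T}(a)+W_a\subseteq\mcal{T}(a)+\mcal{T}(P_{\max}-a)=\mcal{T}(P_{\max})=D$; taking the union over $a$ and combining with the trivial inclusion $D\subseteq\mathfrak{p}_\SS(D)$ of \refprop{inv_mult_p} gives $\mathfrak{p}_\SS(D)=D$, so $D$ is $F$-invariant.

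The inclusion $W_a\subseteq\mcal{T}(P_{\max}-a)$ is the main obstacle, and I expect to prove it by a case analysis on the face of $\mcal{T}(a)$ onto which a given $z=(z_1,z_2)\in D$ projects, checking in each case the two defining inequalities of $\mcal{T}(P_{\max}-a)$, namely $0\le w_1\le P_{\max}-a$ and $|w_2|\le w_1\tan\varphi$, for $w=(w_1,w_2)=z-\vor[\mcal{T}(a)]{z}$. If $z\in\mcal{T}(a)$ the residual is $0$; if $z$ projects into the relative interior of the vertical right edge, then $w=(z_1-a,0)$ with $0\le z_1-a\le P_{\max}-a$, which clearly lies in $\mcal{T}(P_{\max}-a)$. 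The delicate case is projection onto a corner, say the top-right vertex $A=(a,a\tan\varphi)$, where $w=z-A$: the normal-cone condition forces $z_2\ge a\tan\varphi$, while $z\in\mcal{T}(P_{\max})$ forces $z_2\le z_1\tan\varphi$; together these give $z_1\ge a$ (hence $w_1\ge 0$) and $w_2=z_2-a\tan\varphi\le (z_1-a)\tan\varphi=w_1\tan\varphi$, while $w_1=z_1-a\le P_{\max}-a$ because $z_1\le P_{\max}$. The bottom-right corner is symmetric, and the apex contributes only $w=0$. Thus every residual lands in $\mcal{T}(P_{\max}-a)$, and the key point to stress is that it is exactly the constraint $z\in\mcal{T}(P_{\max})$ that keeps the corner residuals inside the eroded triangle.

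Finally, since $\mathfrak{p}_\SS(\{0\})=D$ and $\mathfrak{p}_\SS(D)=D$, the iterates are constant from step one onward, so $\mathfrak{p}^\infty_\SS(\{0\})=D$ and \refthm{main_uncertain}(i) identifies $D$ as the minimal $F$-invariant set containing the origin. For the error bound I would invoke \refprop{inv_bound_domain}(ii): every $\S=\mcal{T}(x)\in\SS$ satisfies $\mcal{T}(x)\subseteq\mcal{T}(P_{\max})=D$, so $\|e_n\|\le\diam D$. It then remains to compute $\diam\mcal{T}(P_{\max})$, which, being the longest side of the triangle with vertices $(0,0)$ and $(P_{\max},\pm P_{\max}\tan\varphi)$, equals $\max\{P_{\max}/\cos\varphi,\,2P_{\max}\tan\varphi\}$, giving the stated bound.
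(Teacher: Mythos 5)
Your proposal is correct, and its skeleton is the same as the paper's: compute $\mathfrak{p}_\SS(\{0\})=\mcal{T}(P_{\max})$, show the next iterate stabilizes, invoke \refthm{main_uncertain}(i) for minimality, and finish with \refprop{inv_bound_domain}(ii) and the diameter of the isosceles triangle. The one place where you genuinely diverge is the invariance step, which is the real content. The paper routes minimality through the general \reflem{union_min} (your first-iterate computation is exactly that lemma specialized to $s_0=0$), and then establishes invariance by classifying the Voronoi cell types of $\mcal{T}(x)$ and asserting, with reference to a figure, that the residual set $\mcal{G}(x)=\bigcup_{c\in\mcal{T}(x)}(\mcal{T}(P_{\max}))_{\{c\}}-c$ satisfies $\mcal{T}(x)+\mcal{G}(x)=\mcal{T}(P_{\max})$. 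You instead prove the inclusion analytically: your $W_a$ is precisely the paper's $\mcal{G}(a)$, and your two lemmas --- the scaling/Minkowski identity $\mcal{T}(a)+\mcal{T}(b)=\mcal{T}(a+b)$ (immediate from $\mcal{T}(x)=x\,\mcal{T}(1)$ and convexity) and the erosion bound $W_a\subseteq\mcal{T}(P_{\max}-a)$ via the normal-cone case analysis at the right edge and the two right corners --- give a fully rigorous, coordinate-level replacement for the paper's ``from this construction it immediately follows'' step. Your observation that points of $\mcal{T}(P_{\max})$ can never project onto the relative interiors of the slanted edges of $\mcal{T}(a)$, so only the corners and the right edge matter, is exactly the fact the paper's figure encodes. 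The trade-off: the paper's version is shorter and visually transparent; yours is self-contained and would survive without the figure, and the decomposition $\mathfrak{p}_{\mcal{T}(a)}(D)=\mcal{T}(a)+W_a\subseteq\mcal{T}(a)+\mcal{T}(P_{\max}-a)$ is precisely the structure that the paper's concluding Discussion abstracts into the general monotone-collection condition, so your argument generalizes more readily.
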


The proof of \refthm{pv} relies on the following general result.

\begin{lemma} \label{lem:union_min}
Let $\SS$ be a collection of non-empty subsets of $\reals^d$. Assume that $\bigcap_{\S \in \SS} \S \neq \emptyset$. Let 
\[
D := \bigcup_{\S \in \SS} \conv S.
\]
If $D$ is $F$-invariant with respect to $\SS$ then, for any $s_0 \in \bigcap_{\S \in \SS} \S$, $D$ is the \emph{minimal} $F$-invariant set with respect to $\SS$ that contains $s_0$. 
\end{lemma}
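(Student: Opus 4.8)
The plan is to prove minimality directly from the definition of a minimal $F$-invariant set, rather than through the iterative construction of \refthm{main_uncertain}. Being the minimal $F$-invariant set containing $s_0$ amounts to three conditions: (a) $D$ is $F$-invariant, (b) $s_0 \in D$, and (c) $D \subseteq D'$ for every $F$-invariant set $D'$ with $s_0 \in D'$. Condition (a) is exactly the hypothesis of the lemma, and (b) is immediate since $s_0 \in \bigcap_{\S\in\SS}\S$ gives $s_0 \in \S \subseteq \conv\S \subseteq D$ for any single $\S\in\SS$. The whole substance of the statement is therefore condition (c), and this is where I would concentrate the argument.

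To establish (c), I would fix an arbitrary $F$-invariant set $D'$ containing $s_0$ and show $\conv\S \subseteq D'$ for every $\S \in \SS$; the union over $\S$ then yields $D \subseteq D'$. The crucial step is to evaluate the map $F_{\S,x}$ at the common point $s_0$. Since $s_0 \in \S$, the point $s_0$ realizes distance zero to $\S$ and is thus its own closest point, so $\vor[\S]{s_0} = s_0$. Hence, for every $\S \in \SS$ and every $x \in \conv\S$,
\[
F_{\S,x}(s_0) = s_0 + x - \vor[\S]{s_0} = s_0 + x - s_0 = x.
\]
By $F$-invariance of $D'$ together with $s_0 \in D'$, the image $F_{\S,x}(s_0) = x$ lies in $D'$; letting $x$ range over $\conv\S$ gives $\conv\S \subseteq D'$, as required.

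Conditions (a)--(c) together say that $D$ is an $F$-invariant set containing $s_0$ that sits inside every such set, which is precisely the assertion that $D$ is the minimal $F$-invariant set containing $s_0$. (Alternatively, one could take $D'$ to be the minimal invariant set $\mathfrak{p}^\infty_\SS(\{s_0\})$ provided by \refthm{main_uncertain}: invariance of $D$ and $s_0 \in D$ give $\mathfrak{p}^\infty_\SS(\{s_0\}) \subseteq D$ by minimality, while the computation above gives the reverse inclusion, forcing equality.) I do not expect a genuine obstacle here; the only point needing care is the claim $\vor[\S]{s_0} = s_0$ in the presence of the arbitrary tie-breaking permitted by the projection operator, but this causes no difficulty because $s_0$ is the unique distance-zero minimizer, so no tie ever arises.
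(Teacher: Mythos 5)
Your proof is correct. The key computation is the same as the paper's, but the packaging differs: the paper runs the iteration of Theorem~\ref{thm:main_uncertain} starting from $\{s_0\}$, computes $\mathfrak{p}_\S(\{s_0\}) = \conv\S + s_0 - s_0 = \conv\S$ (hence $\mathfrak{p}_\SS(\{s_0\}) = D$), observes that the second iterate equals $D$ by the assumed invariance, and then invokes the theorem's minimality guarantee for the limit set. You instead unpack what that first iterate means at the level of the dynamics: since $\vor[\S]{s_0} = s_0$, the map $F_{\S,x}$ sends $s_0$ to $x$, so any invariant set containing $s_0$ must absorb all of $\conv\S$ for every $\S\in\SS$, i.e.\ must contain $D$. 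This is the same insight --- from the common point $s_0$ the dynamics reaches every point of every $\conv\S$ in one step --- but your version is self-contained and works directly from Definition~\ref{def:invariance}, whereas the paper's version leans on the operator machinery ($\mathfrak{p}_\SS$, monotonicity of iterates, minimality of $\mathfrak{p}^\infty_\SS$); your parenthetical remark correctly identifies that the two are interchangeable. One small presentational point: you should note (as you implicitly do) that the paper's notion of ``minimal'' is indeed ``contained in every $F$-invariant set containing $s_0$,'' which is how minimality is used in Theorems~\ref{theo:main_single} and~\ref{theo:main_single_p}, so your condition (c) is exactly the right target. Your care about tie-breaking in $\vor[\S]{s_0}$ is also warranted and correctly resolved: the minimizer at distance zero is unique.
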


\begin{proof}
Let $s_0 \in \bigcap_{\S \in \SS} \S$. We use the iteration of \refthm{main_uncertain} to prove that $D$ is the minimal $F$-invariant set that contains $s_0$. For the first iteration, for every $\S \in \SS$, we have by \refdef{p} that
\begin{eqnarray*}
\mathfrak{p}_\S(\{s_0\}) &:=& \P + \bigcup_{c \in \S } \vsect{\{s_0\}} -c \\
&=& \P + s_0 - s_0 = \P,
\end{eqnarray*}
where the second equality follows by the fact that $s_0 \in \S$. Therefore, 
\[
\mathfrak{p}_\SS(\{s_0\}) := \bigcup_{\S \in \SS} \mathfrak{p}_\S(\{s_0\}) = \bigcup_{\S \in \SS} \P := D.
\]
Now for the second iteration,
\[
\mathfrak{p}_\SS^2(\{s_0\}) = \mathfrak{p}_\SS(D) = D
\]
by the invariance of $D$. Thus the iteration has converged, and by \refthm{main_uncertain}, $D$ is the minimal $F$-invariant set with respect to $\SS$ that contains $s_0$.
\end{proof}

\begin{proof}[Proof of \refthm{pv}]
Observe that $\bigcap_{\S \in \SS} \S = \{0\}$ and 
\[
\bigcup_{\S \in \SS} \conv S = \bigcup_{\S \in \SS}  S = \bigcup_{0 \leq x \leq P_{\max}} \mcal{T}(x) = \mcal{T}(P_{\max}).
\]
Hence, it remains to show that $\mcal{T}(P_{\max})$ is $F$-invariant with respect to $\SS$. The minimality property then follows immediately from \reflem{union_min}. 
By \refprop{inv_mult_p}, it is enough to show that
\[
\mathfrak{p}_\SS(\mcal{T}(P_{\max})) \subseteq \mcal{T}(P_{\max}).
\]
In other words, we want to show that for any $\S \in \SS$,
\[
\S + \bigcup_{c \in \S} \vsect{\mcal{T}(P_{\max})} - c \subseteq \mcal{T}(P_{\max}).
\]
%
%
First, note that it is straightforward to characterize the different types of Voronoi cells of \S (see also \reffig{triangleconstr2}): 
for interior points, the Voronoi cell is the point itself;
for non-corner points on the boundary, the Voronoi cell is the outward-pointing ray that emanates from that point and is normal to the facet;
for corner points on the boundary, the Voronoi cell is a cone, namely the union of all rays that emanate from that corner point, whose directions vary (continuously) between the normals of the adjacent facts.

Now, it is not hard to see that for any $x\in[0,P_{\max}]$, we can construct $
\mcal{G}(x) := \bigcup_{c \in \mcal{T}(x)} \vsect{\mcal{T}(P_{\max})} - c
$  as shown in \reffig{triangleconstr2}. From this construction it then immediately follows that 
$\mcal{T}(x) + \mcal{G}(x) = \mcal{T}(P_{\max})$, which  
proves $F$-invariance of $\mcal{T}(P_{\max})$ with respect to $\mathbb{S}$. 
Hence, from \refprop{inv_bound_domain} we then have that 
    $\|e_n\| \leq \diam D$
    for all $n \in \natnum$.
Because $\mcal{T}(P_{\max})$ is an isosceles triangle, its diameter is either $P_{\max}/\cos \varphi$ (the length of one of its \emph{legs}) for $\varphi \leq\frac{\pi}{6}$ or $ 2\,P_{\max}\tan \varphi$ otherwise (the length of its \emph{base}).
\end{proof}

\begin{figure*}
  \centering
  \includegraphics[scale=.8]{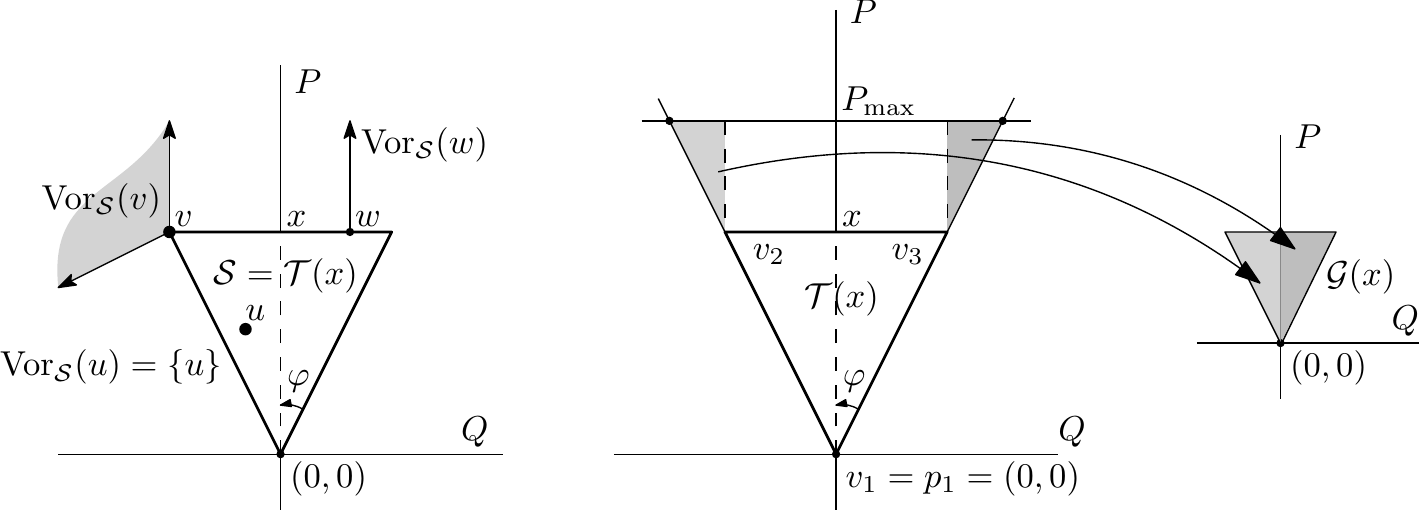}
  \caption{\emph{Left} -- The three Voronoi cell-types for the set \S, 1) a singleton for any interior point ($u$), 2) a cone for any corner point ($v$), and 3) a ray (normal to the corresponding facet) for any non-corner point on the boundary ($w$). \emph{Right} -- Construction of the set $\mcal{G}(x)$.}
  \label{fig:triangleconstr2}
\end{figure*}

\subsection{Simulation} \label{sec:num}

As in \cite{commelec2}, we take a case study that makes reference to the low voltage microgrid benchmark defined by the CIGR\'{E} Task Force C6.04.02 \cite{LVBenchmark}.
For the full description of the case study and the corresponding agents design, the reader is referred to \cite{commelec2}.

There are two modifications compared to the original case study: (i) The PV agents are updated with the algorithm described in Section \ref{sec:pvexample}, and (ii) instead of using an uncontrollable load in the case study, we use a resistive heaters system, and the corresponding agent is implemented according to the methods described in Section \ref{sec:building}.

We simulate a rather extreme scenario involving a highly variable solar irradiance profile. That is, we let the irradiance vary according to a square wave with a period of  $300$ ms. This will cause the PV agent's \pqprof to be highly variable. 

We let the cost function of the PV agent be the same as in \cite{commelec2}; this cost function  encourages to maximize active-power output. The cost function of the heater is set to a quadratic function, whose minimum lies at half the heater power, namely at $-7.5$ kW. With respect to the locking behavior of the heater, we let it lock for one second after a switch.

The results are shown in Figures \ref{fig:errorDiff_heater}--\ref{fig:errorDiffPV}. For comparison, we run the same scenario with resource agents
for which the accumulated error might grow unboundedly. I.e., those RAs do not apply the error-diffusion technique described in this paper, instead, they just project the request to the closest implementable setpoint, like in \cite{commelec2}.

\reffig{errorDiff_heater} and \ref{fig:convergence} illustrate how the use of the error-diffusion algorithm in the heater agent affects the convergence properties of the closed-loop system. Recall that as in \cite{commelec2}, the central controller (i.e., the grid agent) is using a gradient-descent algorithm to compute power setpoints. It can be seen from \reffig{errorDiff_heater} that without error-diffusion, the grid agent ``gets stuck'' on a setpoint that is not close to the optimal value, and the implemented setpoint is also suboptimal. On the other hand, with error-diffusion, both requested and implemented setpoints are optimal on average, as can also be seen in \reffig{convergence}. Similar behavior can be observed in \reffig{utilization} and \ref{fig:errorDiffPV} for the PV agent. Moreover, \reffig{utilization} shows how the error-diffusion algorithm helps to improve utilization of renewables. In particular, there is less curtailment of the PV, hence more energy is produced from this renewable energy source. Finally, the simulation results corroborate the theoretical analysis by showing delivery of the requested power on average, i.e., energy, which can be valuable for an application like a virtual power plant (\reffig{accerror_heater}, \ref{fig:convergence}, \ref{fig:accerror_PV} and \ref{fig:errorDiffPV}).

\begin{figure}
\centering
\includegraphics[width=.8\columnwidth]{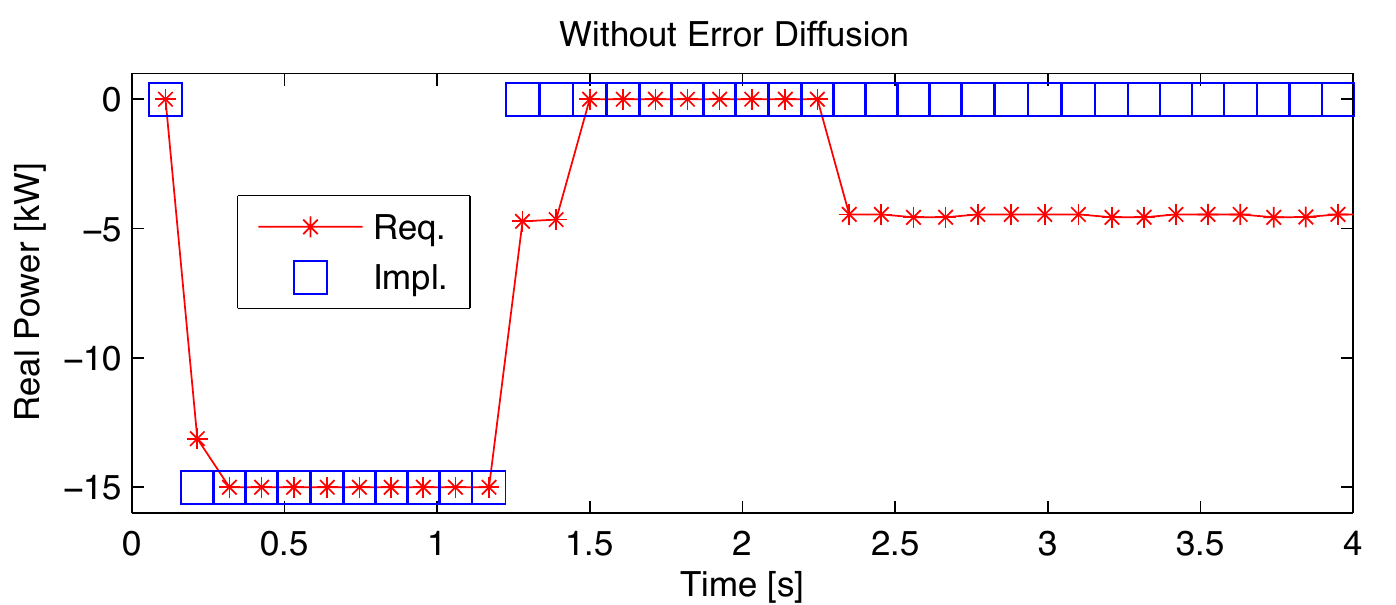}\\
\includegraphics[width=.8\columnwidth]{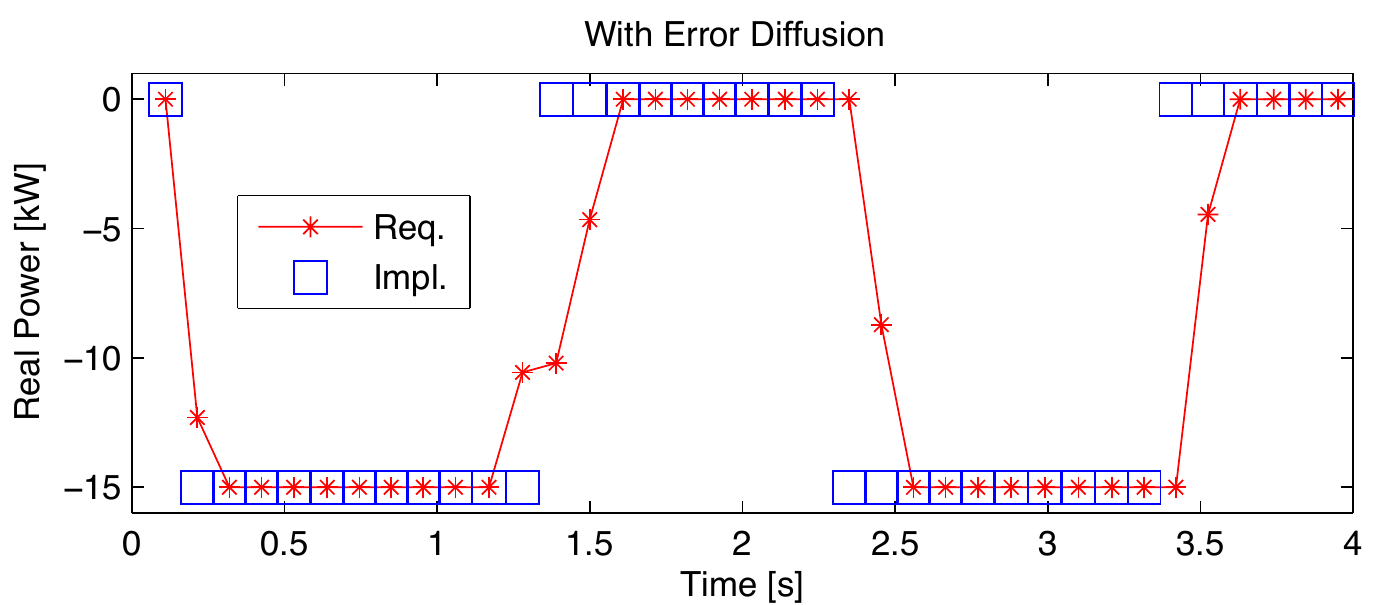}
\caption{Sequence of requested vs. implemented setpoints belonging to the heater agent. In the top figure (no error diffusion) the grid agent ``gets stuck'' on a setpoint that is not close to the optimal value. In the bottom figure (with error diffusion), the heater switches on and off with an appropriate duty cycle (the switching frequency is limited by the locking-duration parameter). }
\label{fig:errorDiff_heater}
\end{figure}

\begin{figure}
\centering
\includegraphics[width=.8\columnwidth]{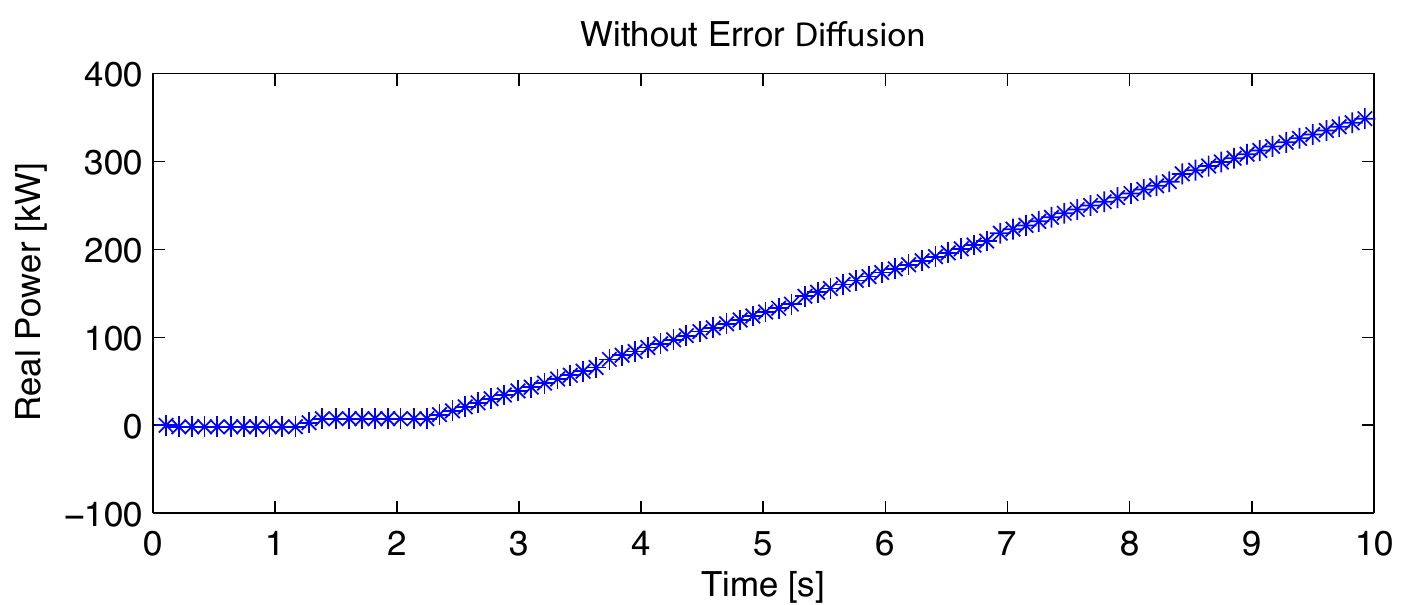}\\
\includegraphics[width=.8\columnwidth]{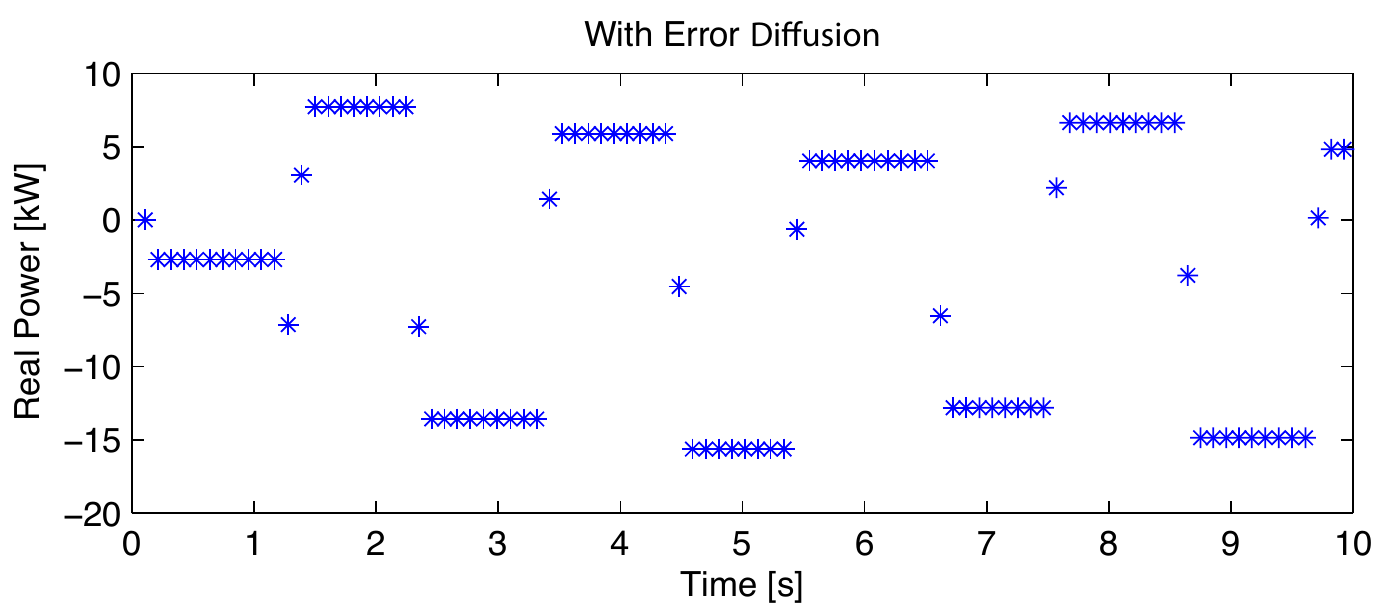}
\caption{Plots of the accumulated error of the heater agent. In the absence of error diffusion (top figure), the accumulated error grows linearly with time. With error diffusion (bottom figure), the accumulated error is bounded from above and below.}
\label{fig:accerror_heater}
\end{figure}

\begin{figure}
\centering
\includegraphics[width=.8\columnwidth]{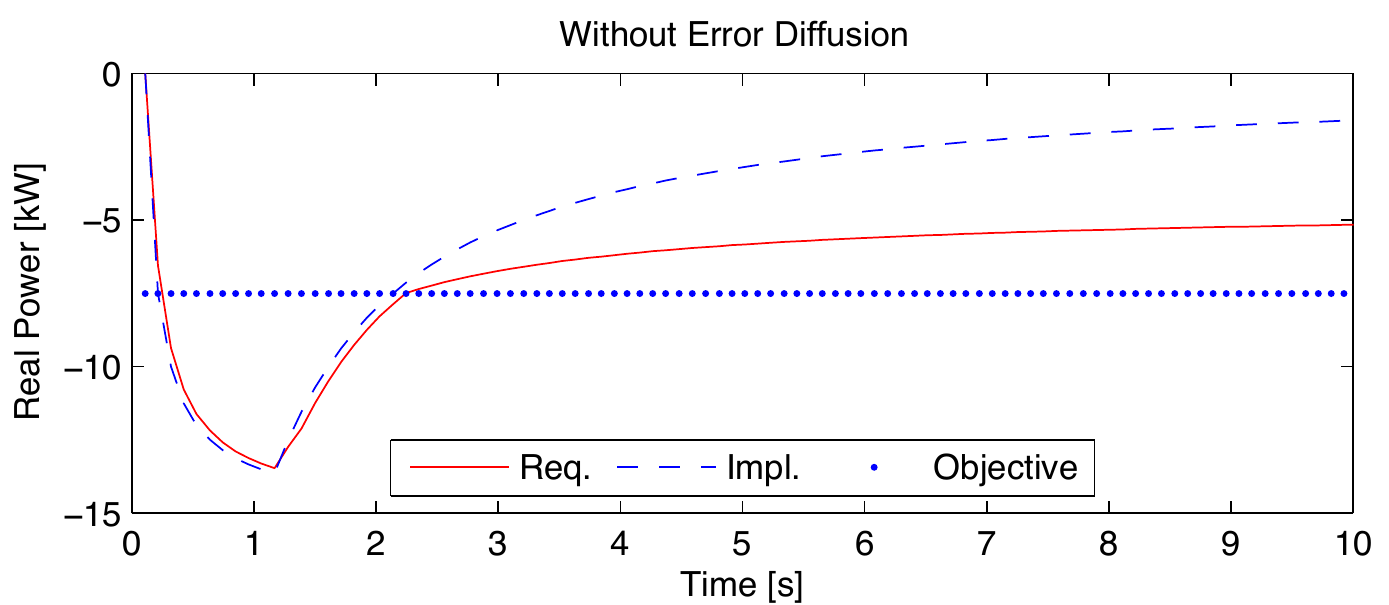}\\
\includegraphics[width=.8\columnwidth]{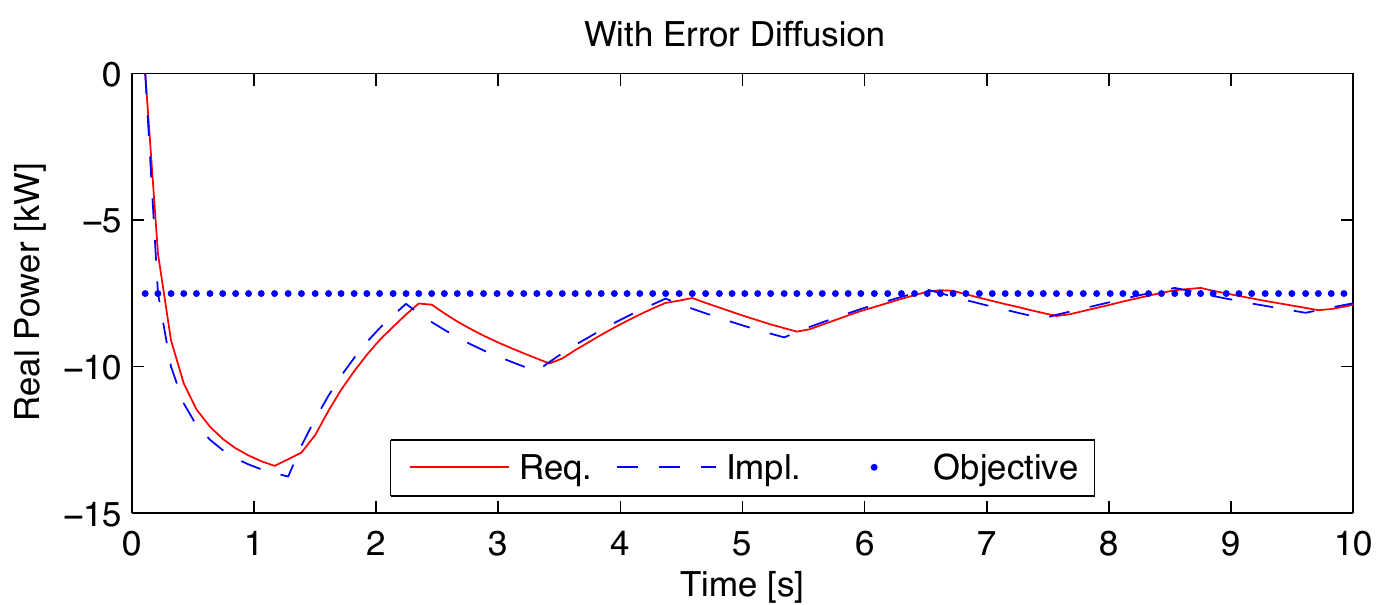}
\caption{Time-averaged sequences of requested and implemented setpoints belonging to the heater agent. In the absence of error diffusion (top figure), the implemented setpoint does not converge to the objective, where the objective is the minimizer of the cost function. With error diffusion (bottom figure), the sequence of implemented setpoint converges towards the objective.}
\label{fig:convergence}
\end{figure}

\begin{figure}
\centering
\includegraphics[width=.8\columnwidth]{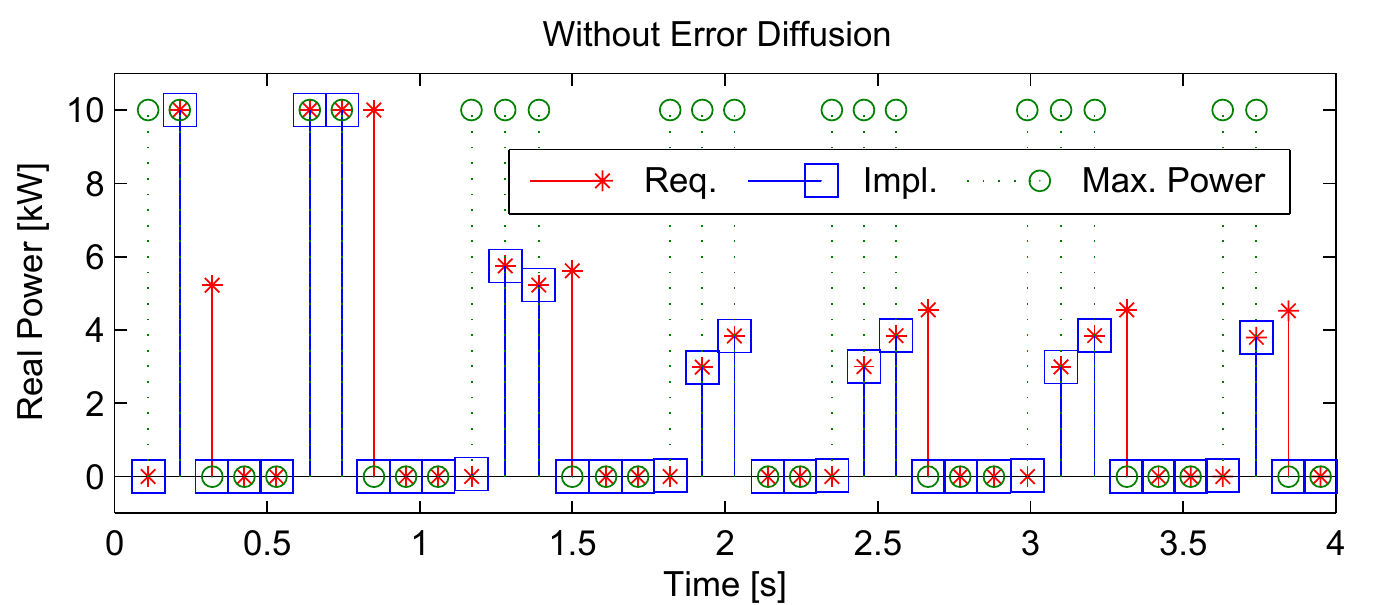}
\includegraphics[width=.8\columnwidth]{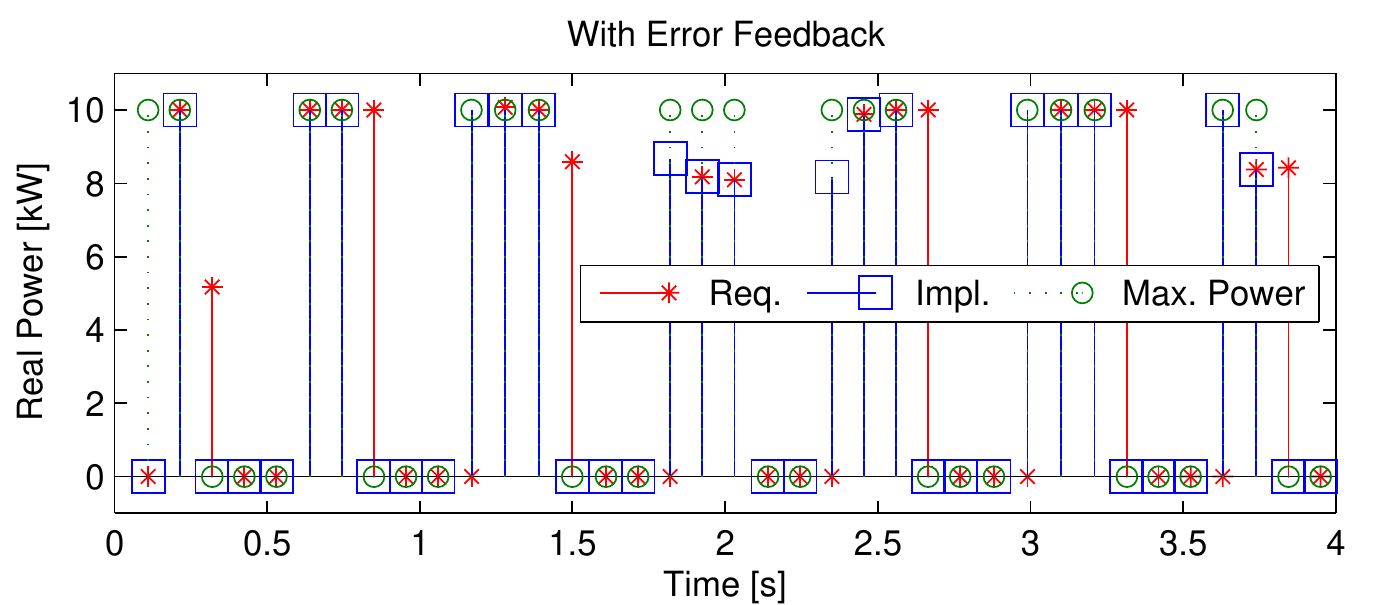}
\caption{Sequence of requested vs. implemented setpoints belonging to the PV agent. The bottom figure shows that error diffusion helps to maximize utilization of the PV.}
\label{fig:utilization}
\end{figure}

\begin{figure}
\centering
\includegraphics[width=.8\columnwidth]{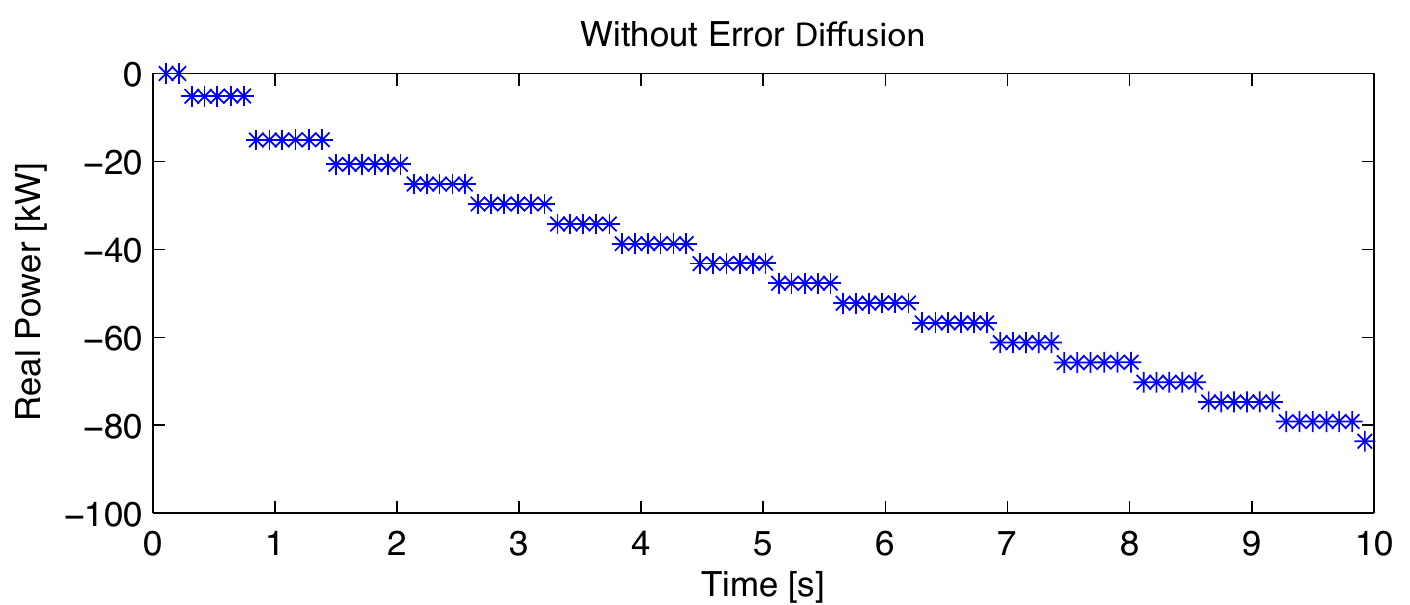}
\includegraphics[width=.8\columnwidth]{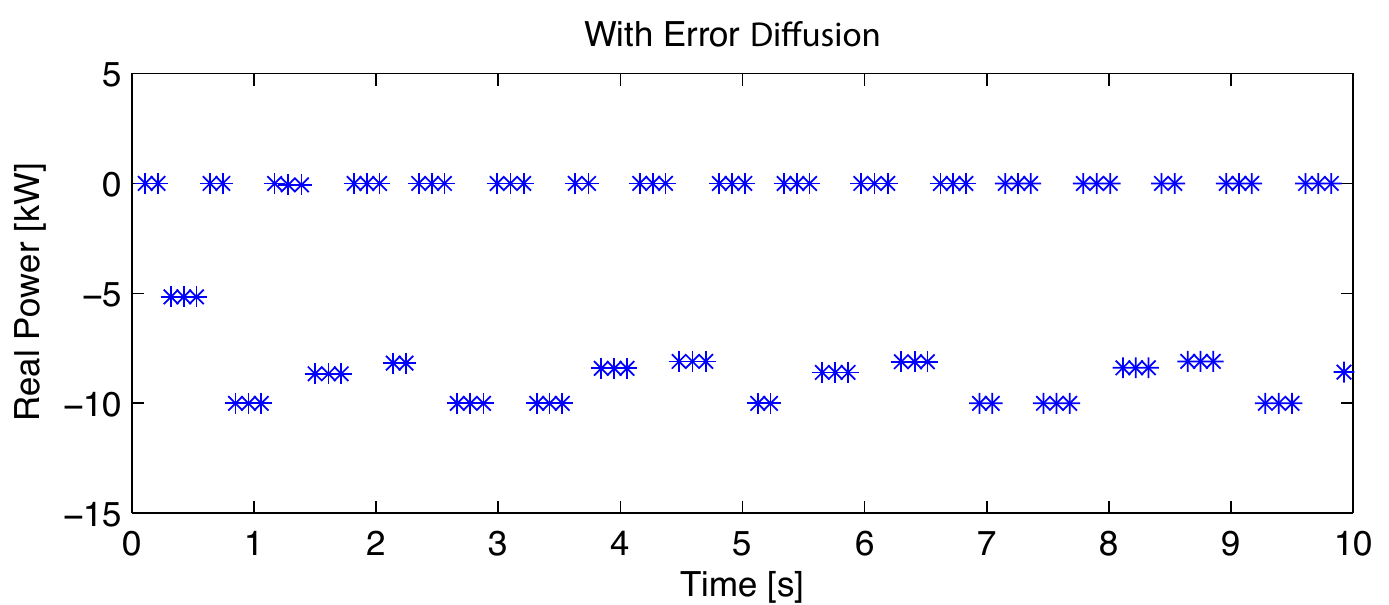}
\caption{Plot of the accumulated error of the PV agent. Like in the heater-agent case, the accumulated error grows unboundedly in the absence of error diffusion (top figure), whereas the accumulated error is bounded with error diffusion.}
\label{fig:accerror_PV}
\end{figure}

\begin{figure}
\centering
\includegraphics[width=.8\columnwidth]{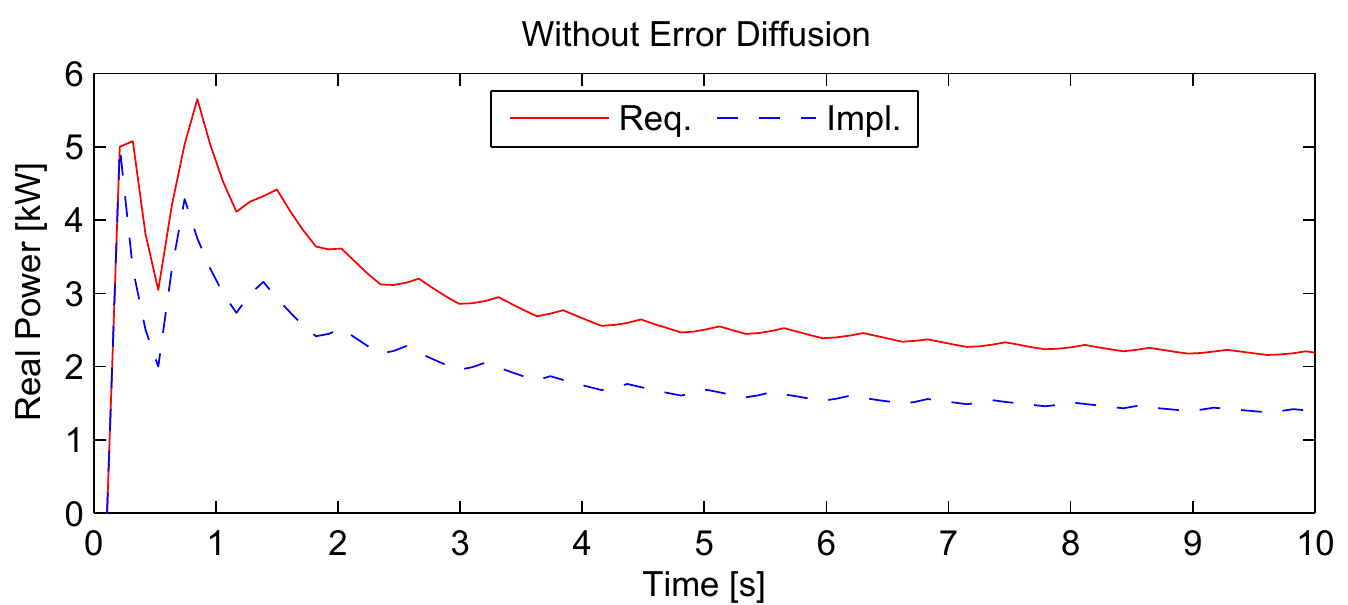}
\includegraphics[width=.8\columnwidth]{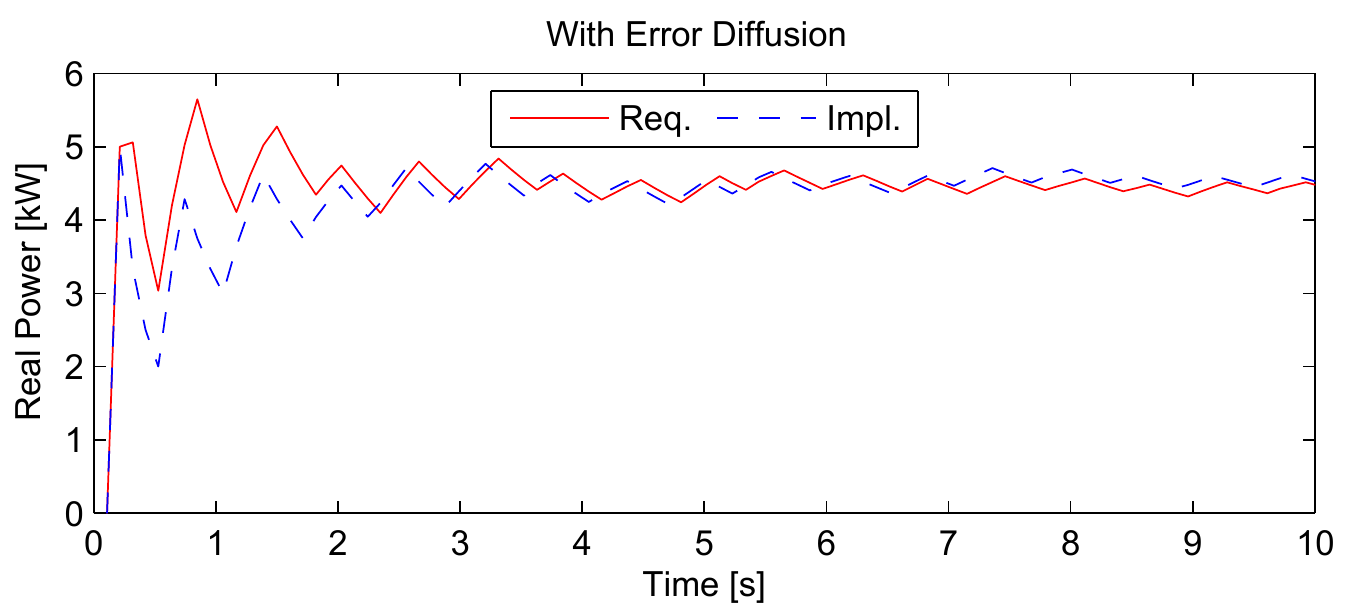}
\caption{Time-averaged sequence of requested vs. implemented setpoints belonging to the PV agent. Also this plot shows that error diffusion helps to increase the  utilization of the PV.}
\label{fig:errorDiffPV}
\end{figure}

\section{Discussion}

In this section, we comment and emphasize several important issues related to our results.

We first discuss the following conjecture related to Theorem \ref{theo:min_G} and \ref{theo:bound_G}.
\paragraph*{Conjecture} Consider a case where the collection $\mathbb{S}$ contains a single point set \mcal{S}, which is shown in \reffig{reg2}. One vertex, $p \in \mcal{S}$, 
lies inside the convex hull of the four other points in \mcal{S}, whose coordinates are $(\pm 10, \pm 10)$. 
When $p$ is moved towards the boundary of $\conv{\mcal{S}}$,  it is easy to see that the corresponding (bounded) Voronoi cell (shown in dark gray) can become arbitrarily large. (Note that if $p$ is placed on the boundary, its Voronoi cell becomes an unbounded set).
Interestingly, we observe that the minimal invariant set (light gray) also becomes larger as $p$ is moved towards $\partial \conv{\mcal{S}}$, in particular, 
it exactly covers the Voronoi cell (when the latter is shifted by $-p$) in the cases corresponding to the three positions of $p$ shown in \reffig{reg2}.
This leads us to conjecture that 
an invariant region must cover all bounded Voronoi cells, when shifted by their corresponding vertex.

%

\begin{figure}
\includegraphics[width=.3\columnwidth]{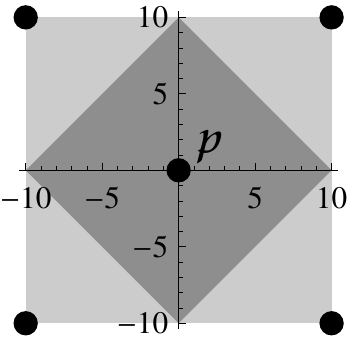}
\hfill 
\includegraphics[width=.25\columnwidth]{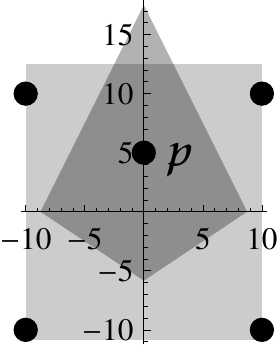}
\hfill 
\includegraphics[width=.2\columnwidth]{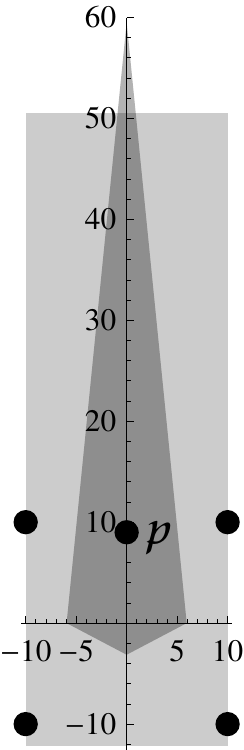}
\caption{A case where the bounded Voronoi cell (in dark gray), corresponding to a point $p$ in the interior of the convex hull of all points ($\conv{\mcal{S}}$), can grow arbitrarily large as $p$ is moved towards the boundary of $\conv{\mcal{S}}$. 
The corner points of \mcal{S} are at coordinates $(\pm 10, \pm 10)$.  
The three plots correspond to three positions of $p$: $p_\mathrm{x}=0$ and $p_\mathrm{y} \in \{0,5,9\}$, where $p_\mathrm{x}$ and $p_\mathrm{y}$ represent the x- and y-coordinate of $p$. In each plot, the minimal invariant set corresponding to \mcal{S} is shown in light gray, and exactly covers the bounded Voronoi cell that corresponds to $p$ (when the latter is translated by $-p$) in these examples; we conjecture that the minimal invariant set must cover all bounded Voronoi cells (when translated appropriately).} \label{fig:reg2}
\end{figure}

This conjecture is also corroborated by condition (3) in Theorem \ref{theo:bound_G}. Indeed, in order to proof boundedness of the minimal invariant set, we require that all bounded Voronoi cells are \emph{uniformly} bounded.

Finally, we would like to point out that our result for the case of the PV system presented in \refsec{pvexample} can be generalized to any local controller with collection $\SS$ having the following properties:
\begin{itemize}
  \item Every $\S \in \SS$ is convex.
  \item The sets are monotonic in the sense that for any $\S, \S' \in \SS$, either $\S \subseteq \S'$ or $\S' \subseteq \S$. Thus, there exists $\S_{\max} \in \SS$ such that $\S \subseteq \S_{\max}$ for all $\S \in \SS$.
  \item For every $\S \in \SS$:
  \[
  S + \bigcup_{c \in S}(S_{\max})_{\{c\}} - c \subseteq \S_{\max},
  \]
  or in other words, for every $x \in S$ and $y \in \S_{\max}$
  \[
  x + y - \vor[\S]{y} \in \S_{\max}.
  \]
\end{itemize}
Under these conditions, it is easy to show that the set $\S_{\max}$ is the minimal $F$-invariant set with respect to $\SS$. The proof is identical to that of \refthm{pv}.

\section{Proofs} \label{sec:proofs}
The proofs in this section extend the previous results to our more general case. In particular, the proof of minimality  is an extension of the proofs given in \cite{nowicki2004}, while the proof of boundedness is an extension of proofs in \cite{adler2005, tresser2007}.

\subsection{Preliminaries}
We next list well-known results that are useful in our proofs.

\begin{proposition}[Convex hull is non-decreasing] \label{lem:conv_mono}
For all sets $A$ and $B$ with $A \subseteq B$,
\[
\conv{X} \subseteq \conv{Y}
\]
\end{proposition}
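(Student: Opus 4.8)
The plan is to prove this elementary monotonicity property using the standard characterization of the convex hull as the smallest convex set containing a given set. (I note the statement as typeset has a typo: the conclusion should read $\conv{A} \subseteq \conv{B}$ rather than $\conv{X} \subseteq \conv{Y}$, since $X$ and $Y$ are never introduced.) The result is essentially immediate, so the ``proof'' is really just an unwinding of definitions.

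First I would fix the two standard, equivalent descriptions of $\conv{\cdot}$ and choose whichever makes the argument cleanest. The cleanest is the \emph{extremal} (intersection) definition: $\conv{B}$ is the intersection of all convex sets containing $B$, equivalently the smallest convex set containing $B$. With that in hand, the argument is a single chain of inclusions. Since $A \subseteq B$ by hypothesis and $B \subseteq \conv{B}$ always holds, transitivity gives $A \subseteq \conv{B}$. Thus $\conv{B}$ is a convex set that contains $A$. But $\conv{A}$ is by definition the \emph{smallest} convex set containing $A$, so it is contained in every convex set containing $A$; in particular $\conv{A} \subseteq \conv{B}$, which is exactly the claim.

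Alternatively, I could run the argument through the constructive (convex-combination) description, which is arguably even more transparent: every point of $\conv{A}$ is a finite convex combination $\sum_i \lambda_i a_i$ with each $a_i \in A$ and $\lambda_i \geq 0$, $\sum_i \lambda_i = 1$; because $A \subseteq B$, each $a_i$ also lies in $B$, so the same combination exhibits the point as an element of $\conv{B}$. Either route closes the proof in one step.

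There is no genuine obstacle here — the statement is a textbook fact and the entire content is definitional. The only point worth stating carefully is which definition of $\conv{\cdot}$ is being invoked, so that the phrase ``smallest convex set containing'' (or ``set of all convex combinations'') is available to cite; once that is pinned down, the inclusion follows with no further work.
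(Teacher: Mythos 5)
Your proof is correct; the paper states this proposition as a well-known preliminary fact and gives no proof of its own, so there is nothing to compare against. Either of your two routes (the ``smallest convex set containing'' characterization or the convex-combination characterization) is the standard one-line argument, and your observation that the displayed conclusion should read $\conv{A} \subseteq \conv{B}$ rather than $\conv{X} \subseteq \conv{Y}$ correctly identifies a typo in the statement.
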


\begin{corollary}
For all sets $A$ and $B$,
\[
\conv{(A \cup B)} \supseteq \conv{A} \cup \conv{B}
\]
\end{corollary}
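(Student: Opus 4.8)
The plan is to obtain this inclusion as an immediate consequence of the monotonicity of the convex hull established in the preceding Proposition (``Convex hull is non-decreasing''). The key observation is that both $A$ and $B$ are subsets of their union $A \cup B$, so passing to convex hulls can only enlarge them up into $\conv{(A \cup B)}$, never out of it.

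Concretely, I would first note that $A \subseteq A \cup B$ and apply the Proposition to conclude $\conv{A} \subseteq \conv{(A \cup B)}$. By the identical argument applied to the inclusion $B \subseteq A \cup B$, I obtain $\conv{B} \subseteq \conv{(A \cup B)}$. Since $\conv{(A \cup B)}$ then contains each of $\conv{A}$ and $\conv{B}$, it contains their union, yielding $\conv{A} \cup \conv{B} \subseteq \conv{(A \cup B)}$, which is exactly the claimed $\conv{(A \cup B)} \supseteq \conv{A} \cup \conv{B}$.

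There is essentially no obstacle here: the whole argument is two invocations of monotonicity followed by a set-theoretic union step. The only subtlety worth flagging is that one should resist the temptation to prove equality, since the reverse inclusion fails in general (for instance, two distinct singletons $A=\{a\}$, $B=\{b\}$ give $\conv{A}\cup\conv{B}=\{a,b\}$, whereas $\conv{(A\cup B)}$ is the entire segment joining $a$ and $b$). Thus the statement is genuinely one-directional, and the proof should carry only the inclusion in the stated direction.
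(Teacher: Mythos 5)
Your proof is correct and is exactly the argument the paper intends: the corollary is stated as an immediate consequence of the monotonicity proposition, obtained by applying it to the inclusions $A \subseteq A \cup B$ and $B \subseteq A \cup B$ and taking the union. Your remark that the reverse inclusion fails in general is a sensible additional observation, but otherwise there is nothing to add.
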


\begin{proposition}[The Minkowski sum is distributive over unions] \label{prop:mink_union}
Let $G$ be a group and let $A$, $B$ and $C$ be arbitrary subsets of $G$. Then, 
\[
A + (B \cup C) = (A + B) \cup (A + C).
\]
\end{proposition}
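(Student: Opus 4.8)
The plan is to prove this by a direct double-inclusion argument, simply unfolding the definition of the Minkowski sum. Recall from the Notation section that $A + B := \{a + b : a \in A,\, b \in B\}$. First I would take an arbitrary $x \in A + (B \cup C)$; by definition there exist $a \in A$ and $d \in B \cup C$ with $x = a + d$. The key observation is that the membership $d \in B \cup C$ splits into the disjunction ``$d \in B$ or $d \in C$'', and in either case $x = a + d$ lands in $A + B$ or in $A + C$ respectively, hence in $(A + B) \cup (A + C)$. This establishes the inclusion $A + (B \cup C) \subseteq (A + B) \cup (A + C)$.

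For the reverse inclusion, I would take $x \in (A + B) \cup (A + C)$, so $x \in A + B$ or $x \in A + C$. In the first case $x = a + b$ with $a \in A$ and $b \in B \subseteq B \cup C$, and in the second $x = a + c$ with $c \in C \subseteq B \cup C$; either way $x \in A + (B \cup C)$. Combining the two inclusions yields the claimed equality.

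I expect no genuine obstacle here: the statement is an elementary set-theoretic identity, and the only point requiring a moment's care is the interchange of the existential quantifier with the disjunction coming from the union, namely that ``there exists $d \in B \cup C$ with $x = a + d$'' is logically equivalent to ``there exists $d \in B$ with $x = a+d$, or there exists $d \in C$ with $x = a+d$''. It is worth remarking that the group hypothesis is never actually used: the argument goes through verbatim for any set $G$ equipped with an arbitrary binary operation, and the same reasoning shows more generally that the Minkowski sum distributes over \emph{arbitrary} unions, of which this proposition is merely the two-set special case.
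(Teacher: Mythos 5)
Your proof is correct. The paper itself states this proposition without proof, listing it among the ``well-known results'' in the preliminaries to Section~\ref{sec:proofs}, so there is no argument in the paper to compare against; your direct double-inclusion argument, unfolding the definition of the Minkowski sum and distributing the existential quantifier over the disjunction coming from the union, is the standard one and is complete. Your side remarks are also accurate: the group hypothesis is never used (only a binary operation on $G$ is needed), and the identity extends verbatim to arbitrary unions.
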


\begin{proposition}[Minkowski sums and convex hulls commute]
For arbitrary sets A and B,
\[
\conv{(A + B)} = \conv A + \conv B.
\]
\end{proposition}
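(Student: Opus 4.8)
The plan is to prove the equality by establishing the two set inclusions separately, each through a direct manipulation of convex combinations. No auxiliary machinery (Carathéodory, separation, etc.) is needed; the whole argument is elementary bookkeeping with convex weights.

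First I would show $\conv(A+B) \subseteq \conv A + \conv B$. Take an arbitrary $p \in \conv(A+B)$; by definition it is a finite convex combination $p = \sum_{i=1}^m \lambda_i (a_i + b_i)$ with $a_i \in A$, $b_i \in B$, each $\lambda_i \geq 0$, and $\sum_i \lambda_i = 1$. Regrouping gives $p = \left(\sum_i \lambda_i a_i\right) + \left(\sum_i \lambda_i b_i\right)$, where the first summand is a convex combination of points of $A$ and hence lies in $\conv A$, and the second lies in $\conv B$ by the same reasoning. Thus $p \in \conv A + \conv B$, which settles one direction.

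For the reverse inclusion the key device is to pass to \emph{product} weights. Take $p \in \conv A + \conv B$, so $p = \sum_{i=1}^m \lambda_i a_i + \sum_{j=1}^n \mu_j b_j$, where the $\lambda_i$ and the $\mu_j$ are each nonnegative and sum to one. I would form the products $\lambda_i \mu_j$: these are nonnegative and satisfy $\sum_{i,j} \lambda_i \mu_j = \left(\sum_i \lambda_i\right)\left(\sum_j \mu_j\right) = 1$, so they constitute a valid system of convex weights. The associated convex combination of the points $a_i + b_j \in A+B$, namely $\sum_{i,j} \lambda_i \mu_j (a_i + b_j)$, therefore lies in $\conv(A+B)$. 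Expanding the double sum and using $\sum_j \mu_j = 1$ and $\sum_i \lambda_i = 1$ to collapse it recovers exactly $\sum_i \lambda_i a_i + \sum_j \mu_j b_j = p$, so $p \in \conv(A+B)$. Combining the two inclusions yields the claimed identity.

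I expect no genuine obstacle here; the only step requiring care is the reverse inclusion, where one must verify both that the product weights $\lambda_i \mu_j$ genuinely form a convex combination and that the double sum telescopes back to the original point $p$. That is the part I would write out in full detail, the rest being immediate.
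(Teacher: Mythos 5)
Your proof is correct. The paper states this proposition without proof, listing it among ``well-known results,'' so there is no argument in the paper to compare against; your two-inclusion argument---regrouping a single convex combination for $\conv(A+B) \subseteq \conv A + \conv B$, and using the product weights $\lambda_i\mu_j$ for the reverse inclusion---is the standard and complete way to establish it.
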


\subsection{Proofs for the Case of Perfect Prediction} \label{sec:proof_perf}
In this section, we present the proofs for the results of \refsec{perf}. 
For clarity of exposition,  we first present proofs for the case of single set $\S$, and then extend to a collection $\SS$. 

\subsubsection{Single-Set Case}

Recall \refdef{g} of the operator $\mathfrak{g}_\S$.
\begin{lemma} \label{lem:A_in_g}
For any $A \subseteq \reals^d$, we have that
\[
A \subseteq \mathfrak{g}_\S(A).
\]
\label{lem:Asubg}
\end{lemma}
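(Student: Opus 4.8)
The plan is to unwind the definition of $\mathfrak{g}_\S$ and show that every $a\in A$ is produced by the union, by making a single well-chosen selection of the point $c\in\S$. Since $\mathfrak{g}_\S(A)=\bigcup_{c\in\S}\big((\P+A)\cap V_\S(c)\big)-c$, membership $a\in\mathfrak{g}_\S(A)$ is \emph{equivalent} to exhibiting some $c\in\S$ with $a+c\in(\P+A)\cap V_\S(c)$, because subtracting $c$ then returns $a$. I would therefore fix an arbitrary $a\in A$ and hunt for such a $c$.

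The first half of this membership condition, $a+c\in\P+A$, comes for free for \emph{every} choice $c\in\S$: indeed $c\in\S\subseteq\conv\S=\P$ and $a\in A$, so $a+c\in\P+A$. All the content is in the second half, $a+c\in V_\S(c)$, i.e.\ that $c$ is a closest point of $\S$ to the translated point $a+c$. My key idea is to choose $c$ to be a maximizer of the linear functional $c'\mapsto\langle a,c'\rangle$ over $\S$. With this choice, $\langle a,\,c'-c\rangle\le 0$ for every $c'\in\S$, and expanding $\|(a+c)-c'\|^2=\|a\|^2+2\langle a,\,c-c'\rangle+\|c-c'\|^2$ gives $\|(a+c)-c'\|^2\ge\|a\|^2=\|(a+c)-c\|^2$. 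Hence $c\in\arg\min_{c'\in\S}\|(a+c)-c'\|$, which is exactly $a+c\in V_\S(c)$. Combining the two halves, $a=(a+c)-c\in\big((\P+A)\cap V_\S(c)\big)-c\subseteq\mathfrak{g}_\S(A)$, and since $a$ was arbitrary the inclusion $A\subseteq\mathfrak{g}_\S(A)$ follows.

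The step I expect to be the main obstacle is guessing the correct selection rule for $c$: a naive attempt such as $c=\vor[\S]{a}$, or iterating the projection, does not obviously terminate, whereas the support-maximizer choice makes the Voronoi membership fall out of a one-line inner-product inequality. A secondary but genuine subtlety is that this maximizer must actually be \emph{attained}; this is automatic when $\S$ is compact (in particular finite, which is the case of interest in this subsection), and the statement can in fact fail for unbounded $\S$ (e.g.\ $\S=\{(n,0):n\in\natnum\}$ with $a=(1,0)$ admits no valid $c$), so I would record the compactness hypothesis explicitly rather than leave it implicit.
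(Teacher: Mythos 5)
Your proof is correct and follows essentially the same route as the paper's: the paper also fixes $v\in A$, picks a point $c\in\S$ with $c=\vor[\S]{c+v}$, and writes $v=(c+v)-c$ with $c+v\in(\P+A)_{\{c\}}$. The only difference is that the paper merely asserts the existence of such a $c$ (remarking that it is a vertex of $\conv{\S}$), whereas you justify it explicitly via the support-functional maximizer and correctly note that attainment of that maximum requires compactness of $\S$ --- a hypothesis the paper leaves implicit and without which the inclusion can fail.
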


\begin{proof}
Let $v \in A$. There exists $c \in \S$ such that $c = \vor{c + v}$. In particular, $c$ is one of the vertices of $\conv{\S}$. Therefore, $v = (c + v) - c = x - \vor{x}$ for $x = c + v \in (\P + A)_{\{c\}}$ and $\vor{x} = c$, implying that $v \in \mathfrak{g}_\S(A)$.
\end{proof}

\begin{proposition} \label{prop:inv}
$A \subseteq \reals^d$ is $G$-invariant with respect to $\S$ if and only if 
\[
A = \mathfrak{g}_\S(A)
\]
\end{proposition}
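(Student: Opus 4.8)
The plan is to combine \reflem{A_in_g}, which already gives $A \subseteq \mathfrak{g}_\S(A)$ for every $A$, with a proof that $A$ is $G$-invariant precisely when the reverse inclusion $\mathfrak{g}_\S(A) \subseteq A$ holds. The equality $A = \mathfrak{g}_\S(A)$ then splits into the trivial inclusion (from the lemma) and the invariance-equivalent inclusion, so it suffices to establish the two-way implication ``$A$ is $G$-invariant $\iff \mathfrak{g}_\S(A) \subseteq A$''. The bridge between the two formulations is the identity
\[
G_{\S,x}(v) = (x+v) - \vor[\S]{x+v},
\]
which lets me pass back and forth between the map $G$ (indexed by a request $x \in \P$ and applied to an error $v \in A$) and the operator $\mathfrak{g}_\S$, which collects, over all $c \in \S$, the translates $\vsect{(\P + A)} - c$ of the pieces of $\P + A$ lying in the Voronoi cell of $c$.

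For the direction $\mathfrak{g}_\S(A) \subseteq A \Rightarrow$ invariance I would fix $x \in \P$ and $v \in A$, set $z := x + v \in \P + A$, and let $c := \vor[\S]{z}$. Since $c$ is a closest point of $z$ we have $z \in V_\S(c)$, hence $z \in \vsect{(\P+A)}$ and therefore $z - c \in \vsect{(\P+A)} - c \subseteq \mathfrak{g}_\S(A) \subseteq A$. But $z - c = G_{\S,x}(v)$, so $G_{\S,x}(v) \in A$; as $x$ and $v$ were arbitrary, $A$ is $G$-invariant. This direction is routine.

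For the converse I would take an arbitrary $w \in \mathfrak{g}_\S(A)$ and unwind the definition: there is a $c \in \S$ and a point $z \in (\P+A) \cap V_\S(c)$ with $w = z - c$, and writing $z = x + v$ with $x \in \P$ and $v \in A$, the goal is $z - c \in A$. When $c$ is the value returned by the closest-point operator at $z$ (which one may arrange, since $c$ is a genuine closest point and the operator is only constrained to return some element of the $\arg\min$), the identity above gives $z - c = G_{\S,x}(v) \in A$ by invariance, and we are done.

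The main obstacle is exactly this tie-breaking step. If one insists on a single fixed closest-point operator, then for a point $z$ on the boundary between two Voronoi cells $V_\S(c)$ and $V_\S(c')$ the operator returns only one tied point, whereas $\mathfrak{g}_\S(A)$ records $z - c$ for \emph{every} closest point. I would close this gap with a limiting argument using the standing assumption that $A$ is closed: perturb $z$ slightly into the interior of $V_\S(c)$, where $c$ becomes the unique closest point, while keeping the perturbed point inside $\P + A$. Strict convexity of the $\ell_2$-ball guarantees that an interior-pointing direction exists (no closest point of $z$ can be a convex combination of the remaining equidistant ones), so one obtains $z_k \to z$ with $z_k - c = G_{\S,x_k}(v) \in A$, and closedness of $A$ forces $z - c \in A$. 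Checking that this perturbation can be realized within $\P + A$ — aligning a feasible direction of $\P$ at $x$ with an interior-pointing direction of $V_\S(c)$ at $z$ — is the one technical point that needs care, and it is precisely where the hypotheses (closed sets, Euclidean projection) enter.
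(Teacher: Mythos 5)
You take essentially the same route as the paper: both proofs use \reflem{A_in_g} to reduce the equality to the single inclusion $\mathfrak{g}_\S(A)\subseteq A$, and both establish the direction ``$\mathfrak{g}_\S(A)\subseteq A\Rightarrow$ invariance'' by writing $z=x+v\in\P+A$, noting $z\in V_\S(\vor[\S]{z})$, and concluding $G_{\S,x}(v)=z-\vor[\S]{z}\in\mathfrak{g}_\S(A)\subseteq A$. Where you diverge is the converse: the paper simply asserts that every element of $\mathfrak{g}_\S(A)$ has the form $z-\vor[\S]{z}$ for some $z\in\P+A$ and derives a contradiction, staying silent on ties, whereas you correctly observe that a point $z$ on a Voronoi boundary contributes $z-c$ to $\mathfrak{g}_\S(A)$ for \emph{every} tied $c\in\S$, only one of which equals $z-\vor[\S]{z}$. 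Your closedness-based limiting argument is a sensible way to address this, but the step you yourself flag as ``needing care'' is a genuine obstruction rather than a formality: to first order, a direction $u$ moves $z$ into the region where $c$ is the unique closest point iff $\langle u,\,c'-c\rangle<0$ for every tied $c'$, and since $A$ is an arbitrary closed set the only perturbations available to you lie in the tangent cone of $\P$ at $x$, which need not meet that cone of good directions. So your patch, as written, does not completely dispose of the tie case. That said, the paper's own proof leaves exactly the same case unaddressed, so your argument establishes everything the paper's does and is more honest about the subtlety; the cleanest full repair is to read \refdef{invariance} as requiring invariance under every admissible tie-breaking of the closest-point operator, under which the converse becomes immediate with no limiting argument at all.
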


\begin{proof}
Note that by Lemma \ref{lem:A_in_g}, we only need to consider the condition $A \supseteq \mathfrak{g}_\S(A)$ for the purpose of the proof.

$(\Rightarrow)$ Assume that $A \supseteq \mathfrak{g}_\S(A)$. Let $x \in \P + A$. Also, by the definition of Voronoi cells, $x \in V_\S(\vor{x})$. Thus,
\[
x \in (\P + A)_{\{\vor{x}\}}
\]
and
\[
x - \vor{x} \in (\P + A)_{\{\vor{x}\}} - \vor{x} \subseteq \mathfrak{g}_\S(A) \subseteq A,
\]
where the last set inclusion follows by the hypothesis. 
Hence, by Definition \ref{def:invariance}, $A$ is an invariant set.

$(\Leftarrow)$ Assume that $A$ is an invariant set. Also, assume by the way of contradiction that $A \not\supseteq \mathfrak{g}_\S(A)$. Namely, there exists $v \in \mathfrak{g}_\S(A)$ such that $v \notin A$. But every $v \in \mathfrak{g}_\S(A)$ can be written as $v = x - \vor{x}$ for some $x \in \P + A$. In other words, there exists $x \in  \P + A$ such that $x - \vor{x} \notin A$, a contradiction to Definition \ref{def:invariance}.
\end{proof}

\begin{lemma}[Properties of $\mathfrak{g}_\S$] \label{lem:g_prop}
\begin{enumerate}
    \item[] 
    \item[(i)] (Monotonicity) If $A \subseteq B$ then $\mathfrak{g}_\S(A) \subseteq \mathfrak{g}_\S(B)$.
    \item[(ii)] (Additivity) $\mathfrak{g}_\S(A \cup B) = \mathfrak{g}_\S(A) \cup \mathfrak{g}_\S(B)$.
\end{enumerate}
\end{lemma}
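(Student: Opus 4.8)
The plan is to prove both properties by reducing them to elementary facts about how the three operations appearing in the definition of $\mathfrak{g}_\S$ interact with inclusion and with unions. Recall that $\mathfrak{g}_\S(Q) = \bigcup_{c\in\S}\bigl((\P+Q)\cap V_{\S}(c)\bigr) - c$, so each term is built by (a) forming the Minkowski sum with the fixed set $\P$, (b) intersecting with the fixed Voronoi cell $V_{\S}(c)$, and (c) translating by $-c$; the outer union then ranges over $c\in\S$. Every part of the argument will simply track one of these three operations.

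For part (i), monotonicity, I would observe that each building-block operation is monotone and then chain the inclusions. If $A\subseteq B$, then $\P + A \subseteq \P + B$ directly from the definition of the Minkowski sum, since every $p+a$ with $p\in\P$ and $a\in A$ also has $a\in B$. Intersecting both sides with the fixed set $V_{\S}(c)$ preserves the inclusion, and so does the subsequent translation by $-c$. Hence for every $c\in\S$ we obtain $\bigl((\P+A)\cap V_{\S}(c)\bigr) - c \subseteq \bigl((\P+B)\cap V_{\S}(c)\bigr) - c$, and taking the union over $c\in\S$ of these term-wise inclusions yields $\mathfrak{g}_\S(A) \subseteq \mathfrak{g}_\S(B)$.

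For part (ii), additivity, I would push the union $A\cup B$ through the definition using distributive laws. By \refprop{mink_union}, $\P + (A\cup B) = (\P+A)\cup(\P+B)$. Intersection distributes over union, so intersecting with $V_{\S}(c)$ splits the expression into $\bigl((\P+A)\cap V_{\S}(c)\bigr)\cup\bigl((\P+B)\cap V_{\S}(c)\bigr)$, and translation by $-c$ distributes over union as well. Finally I would interchange this (now doubled) inner union with the outer union over $c\in\S$, using the commutativity and associativity of union to regroup all the $A$-terms and all the $B$-terms separately, recovering exactly $\mathfrak{g}_\S(A)\cup\mathfrak{g}_\S(B)$.

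There is no genuine obstacle here; both statements are bookkeeping verifications built from \refprop{mink_union} and the distributivity of intersection and translation over union. The only point deserving a moment's care is the regrouping step in part (ii): one must check that commuting the two-term inner union past the outer union over the index set $\S$ is legitimate. Since union is commutative and associative over an arbitrary family of sets, this regrouping is valid regardless of whether $\S$ is finite or infinite, so the \emph{equality} (not merely an inclusion) in part (ii) holds in full generality.
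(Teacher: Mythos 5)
Your proof is correct and rests on the same key ingredient as the paper's, namely the distributivity of the Minkowski sum over unions (\refprop{mink_union}); the paper likewise dismisses (i) as straightforward and uses that proposition for (ii). The only (cosmetic) difference is that you establish (ii) as a single chain of equalities by distributing the intersection, the translation, and the outer union, whereas the paper proves the two inclusions separately --- the forward one via monotonicity (i) and the reverse one by element-chasing --- so your version is marginally more self-contained but not a different argument.
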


\begin{proof}
Property $(i)$ is straightforward. To prove (ii), first note that by the monotonicity property (i), we have that $\mathfrak{g}_\S(A) \subseteq \mathfrak{g}_\S(A \cup B)$ and $\mathfrak{g}_\S(B) \subseteq \mathfrak{g}_\S(A \cup B)$, hence $\mathfrak{g}_\S(A) \cup \mathfrak{g}_\S(B) \subseteq \mathfrak{g}_\S(A \cup B)$. For the other direction, let $v \in \mathfrak{g}_\S(A \cup B)$. Then, $v = x - \vor{x}$ for some $x \in \P + A \cup B = (\P + A) \cup (\P + B)$, where the last equality follows by \refprop{mink_union}. Therefore, $v \in \mathfrak{g}_\S(A)$ or $v \in \mathfrak{g}_\S(B)$. This implies that $\mathfrak{g}_\S(A) \cup \mathfrak{g}_\S(B) \supseteq \mathfrak{g}_\S(A \cup B)$ and completes the proof of the Lemma.
\end{proof}

\begin{definition}[Iterates of $\mathfrak{g}_\S(A)$] \label{def:g_iter}
In the setting of Definition \ref{def:g}, we let
\[
\mathfrak{g}^0_\S(A) := A
\]
and
\[
\mathfrak{g}^n_\S(A) := \mathfrak{g}_\S(\mathfrak{g}^{n-1}_\S(A)) \quad \text{for } n\in \natnum, n \geq 1.
\]
\end{definition}

\begin{proposition}[Monotonicity of the iterates] \label{prop:mono}
Let $A \subseteq \reals^d$. Then the iterates $\mathfrak{g}^n_\S(A)$ are monotonic, in the sense that $\mathfrak{g}^n_\S(A) \subseteq \mathfrak{g}^{n'}_\S(A)$ for all $n \leq n'$. Thus, there exists
\[
\mathfrak{g}^\infty_\S(A) := \lim_{n \rightarrow \infty} \mathfrak{g}^n_\S(A) = \bigcup_{n\geq0} \mathfrak{g}^n_\S(A).
\]
\end{proposition}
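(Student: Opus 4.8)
The plan is to prove the chain of inclusions by first establishing the single-step inclusion $\mathfrak{g}^n_\S(A) \subseteq \mathfrak{g}^{n+1}_\S(A)$ for every $n \geq 0$, and then chaining these together by transitivity of set inclusion. The two ingredients already available are exactly what is needed: \reflem{A_in_g}, which gives the ``extensivity'' statement $B \subseteq \mathfrak{g}_\S(B)$ for every set $B$, and the monotonicity property \reflem{g_prop}(i), which propagates an inclusion through one application of the operator.

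First I would prove $\mathfrak{g}^n_\S(A) \subseteq \mathfrak{g}^{n+1}_\S(A)$ by induction on $n$. For the base case $n=0$, this reads $A = \mathfrak{g}^0_\S(A) \subseteq \mathfrak{g}^1_\S(A) = \mathfrak{g}_\S(A)$, which is precisely \reflem{A_in_g} applied to the set $A$. For the inductive step, suppose $\mathfrak{g}^{n-1}_\S(A) \subseteq \mathfrak{g}^{n}_\S(A)$ holds. Applying the operator $\mathfrak{g}_\S$ to both sides and invoking \reflem{g_prop}(i) yields $\mathfrak{g}_\S(\mathfrak{g}^{n-1}_\S(A)) \subseteq \mathfrak{g}_\S(\mathfrak{g}^{n}_\S(A))$, which by the definition of the iterates (\refdef{g_iter}) is exactly $\mathfrak{g}^{n}_\S(A) \subseteq \mathfrak{g}^{n+1}_\S(A)$. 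This closes the induction.

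Next, for arbitrary $n \leq n'$, I would chain the consecutive single-step inclusions $\mathfrak{g}^n_\S(A) \subseteq \mathfrak{g}^{n+1}_\S(A) \subseteq \cdots \subseteq \mathfrak{g}^{n'}_\S(A)$ and conclude $\mathfrak{g}^n_\S(A) \subseteq \mathfrak{g}^{n'}_\S(A)$ by transitivity, which is the asserted monotonicity. Finally, since $\{\mathfrak{g}^n_\S(A)\}_{n \geq 0}$ is a nested increasing family of sets, its set-theoretic limit exists and coincides with the union $\bigcup_{n \geq 0} \mathfrak{g}^n_\S(A)$; this is the standard fact that the limit of a monotone-increasing sequence of sets is their union.

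There is no genuine obstacle here: the statement is a routine monotone-fixed-point-style argument, and the only care required is to feed the inductive hypothesis through the operator via \reflem{g_prop}(i) (rather than attempting to re-derive the inclusion directly from the definition of $\mathfrak{g}_\S$), and to note explicitly that the word ``limit'' is to be read as the union of the nested family, so that $\mathfrak{g}^\infty_\S(A)$ is well-defined.
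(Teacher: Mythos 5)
Your proof is correct and follows exactly the route the paper takes: the paper's own proof is a one-line remark that the proposition is a direct consequence of Lemma~\ref{lem:A_in_g} (extensivity, giving the base case) and Lemma~\ref{lem:g_prop}(i) (monotonicity of $\mathfrak{g}_\S$, giving the inductive step). You have merely made the induction and the identification of the limit with the union explicit.
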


\begin{proof}
This proposition is a direct consequence of Lemma \ref{lem:g_prop} (i) and Lemma \ref{lem:A_in_g}.  
\end{proof}

\begin{theorem} \label{theo:main_single}
Let $A \subseteq \reals^d$. The set $\mathfrak{g}^\infty_\S(A)$ is the \emph{minimal} $G$-invariant set containing the set $A$. 
\end{theorem}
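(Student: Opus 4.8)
The plan is to verify the three defining properties of a minimal $G$-invariant set: that $\mathfrak{g}^\infty_\S(A)$ (i)~contains $A$, (ii)~is $G$-invariant, and (iii)~is contained in every $G$-invariant set that contains $A$. Property~(i) is immediate, since $A = \mathfrak{g}^0_\S(A) \subseteq \bigcup_{n\geq 0}\mathfrak{g}^n_\S(A) = \mathfrak{g}^\infty_\S(A)$, and the existence of this limit as a monotone union is already guaranteed by Proposition~\ref{prop:mono}. So the work lies in (ii) and (iii).

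For invariance~(ii), by Proposition~\ref{prop:inv} it suffices to prove $\mathfrak{g}_\S(\mathfrak{g}^\infty_\S(A)) = \mathfrak{g}^\infty_\S(A)$. The crux is that $\mathfrak{g}_\S$ commutes with the increasing union defining $\mathfrak{g}^\infty_\S(A)$. I would establish $\mathfrak{g}_\S\bigl(\bigcup_{n}\mathfrak{g}^n_\S(A)\bigr) = \bigcup_{n}\mathfrak{g}_\S(\mathfrak{g}^n_\S(A))$ by element-chasing: any $v$ in the left-hand side has the form $v = x-c$ for some $c \in \S$ with $x \in \bigl(\P + \bigcup_n \mathfrak{g}^n_\S(A)\bigr)\cap V_\S(c)$; since the Minkowski sum distributes over unions (Proposition~\ref{prop:mink_union}), $x \in \P + \mathfrak{g}^m_\S(A)$ for some finite $m$, whence $v \in \mathfrak{g}_\S(\mathfrak{g}^m_\S(A))$, giving the inclusion $\subseteq$. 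The reverse inclusion is monotonicity (Lemma~\ref{lem:g_prop}(i)). Consequently $\mathfrak{g}_\S(\mathfrak{g}^\infty_\S(A)) = \bigcup_{n\geq 0}\mathfrak{g}^{n+1}_\S(A) = \bigcup_{m\geq 1}\mathfrak{g}^m_\S(A)$, which equals $\mathfrak{g}^\infty_\S(A)$ because $\mathfrak{g}^0_\S(A) = A \subseteq \mathfrak{g}^1_\S(A)$ by Lemma~\ref{lem:A_in_g}.

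For minimality~(iii), let $B \supseteq A$ be an arbitrary $G$-invariant set; I would show $\mathfrak{g}^n_\S(A) \subseteq B$ for every $n$ by induction. The base case $n=0$ is the hypothesis $A \subseteq B$. For the inductive step, monotonicity (Lemma~\ref{lem:g_prop}(i)) together with the fixed-point characterization of invariance $\mathfrak{g}_\S(B) = B$ (Proposition~\ref{prop:inv}) gives $\mathfrak{g}^{n+1}_\S(A) = \mathfrak{g}_\S(\mathfrak{g}^n_\S(A)) \subseteq \mathfrak{g}_\S(B) = B$. Taking the union over $n$ yields $\mathfrak{g}^\infty_\S(A) \subseteq B$, completing the proof.

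The main obstacle is the infinite-union commutation in step~(ii): Lemma~\ref{lem:g_prop}(ii) supplies only \emph{finite} additivity, so the countable increasing union must be handled separately, which is exactly where the element-chasing argument — relying on the fact that each point of $\mathfrak{g}_\S(Q)$ originates from a single $x \in \P + Q$ — does the real work. A secondary technical point to watch is closedness: an increasing union of closed sets need not be closed, so one should either interpret $\mathfrak{g}^\infty_\S(A)$ via closure or verify that the containment and fixed-point arguments above are unaffected by this issue.
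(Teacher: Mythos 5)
Your proposal is correct and follows essentially the same route as the paper's proof: invariance via commuting $\mathfrak{g}_\S$ with the increasing union and then invoking Proposition~\ref{prop:inv}, and minimality via the inductive argument $Q \supseteq \mathfrak{g}_\S(Q) \supseteq \mathfrak{g}^n_\S(A)$. If anything you are more careful than the paper, which justifies the countable-union commutation by citing only the finite additivity of Lemma~\ref{lem:g_prop}(ii), whereas your element-chasing argument (each point of $\mathfrak{g}_\S(Q)$ originates from a single $x \in \P+Q$, and the Minkowski sum distributes over arbitrary unions) supplies the missing justification for that step.
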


\begin{proof}
We first prove the invariance of $\mathfrak{g}^\infty_\S(A)$. We have that
\begin{eqnarray*}
\mathfrak{g}_\S( \mathfrak{g}^\infty_\S(A)) &=& \mathfrak{g}_\S\left( \bigcup_{n \geq 0}  \mathfrak{g}^n_\S(A) \right) \\
&=& \bigcup_{n \geq 0} \mathfrak{g}_\S\left(\mathfrak{g}^n_\S(A) \right) \\
&=& \bigcup_{n \geq 1} \mathfrak{g}^n_\S(A) \\
&=& A \cup \bigcup_{n \geq 1} \mathfrak{g}^n_\S(A) = \mathfrak{g}^\infty_\S(A),
\end{eqnarray*}
where the second equality follows by Lemma \ref{lem:g_prop} (ii). Thus, by Proposition \ref{prop:inv},  $\mathfrak{g}^\infty_\S(A)$ is invariant. It also contains $A$ by its definition.

To prove minimality, let $Q$ be any invariant set containing $A$. Then
\[
Q \supseteq \mathfrak{g}_\S( Q ) \supseteq \mathfrak{g}_\S( A ),
\]
where the first inclusion follows by Proposition \ref{prop:inv} and the second inclusion follows by the monotonicity property (Lemma \ref{lem:g_prop} (i)). Applying these rules recursively, we obtain that
\[
Q \supseteq \mathfrak{g}_\S^n( A ), \quad n \geq 0.
\]
This implies that
\[
Q \supseteq \bigcup_{n=0}^N \mathfrak{g}^n_\S(A), \quad N \geq 0
\]
and hence
\[
Q \supseteq \mathfrak{g}^\infty_\S(A).
\]
This completes the proof of the Theorem.
\end{proof}

\subsubsection{Convex Case} \label{sec:proof_convex}
We next prove a variant of Theorem \ref{theo:main_single} for the case of convex minimal $G$-invariant sets.

\begin{definition}[Iterates of $\mathfrak{G}_\S(A)$] \label{def:g_iter_conv}
In the setting of Definition \ref{def:g_conv}, we let
\[
\mathfrak{G}^0_\S(A) := A
\]
and
\[
\mathfrak{G}^n_\S(A) := \mathfrak{G}_\S(\mathfrak{G}^{n-1}_\S(A)) \quad \text{for } n\in \natnum, n \geq 1.
\]
\end{definition}

\begin{proposition} \label{prop:inv_conv}
A \emph{convex} set $A \subseteq \reals^d$ is invariant if and only if 
\[
A = \mathfrak{G}_\S(A)
\]
\end{proposition}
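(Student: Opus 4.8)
The plan is to reduce everything to the characterization of invariance already established for (possibly non-convex) sets, namely \refprop{inv}, which states that $A$ is $G$-invariant with respect to $\S$ exactly when $A = \mathfrak{g}_\S(A)$, together with \reflem{A_in_g}, which supplies the universal inclusion $A \subseteq \mathfrak{g}_\S(A)$. The key structural observation is that, by \refdef{g_conv}, the convex operator is obtained from the plain one by a single closing convex hull, i.e. $\mathfrak{G}_\S(A) = \conv(\mathfrak{g}_\S(A))$. Consequently, for a convex $A$ the two fixed-point equations $A = \mathfrak{g}_\S(A)$ and $A = \mathfrak{G}_\S(A)$ will be equivalent, and the proposition drops out.

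For the forward direction I would assume that $A$ is convex and $G$-invariant. Then \refprop{inv} immediately gives $A = \mathfrak{g}_\S(A)$. Applying the convex hull to both sides and using that the convex hull is idempotent on convex sets ($\conv A = A$) yields $\mathfrak{G}_\S(A) = \conv(\mathfrak{g}_\S(A)) = \conv A = A$, as required.

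For the converse I would assume $A = \mathfrak{G}_\S(A) = \conv(\mathfrak{g}_\S(A))$. Extensivity of the convex hull gives $\mathfrak{g}_\S(A) \subseteq \conv(\mathfrak{g}_\S(A)) = A$, hence $\mathfrak{g}_\S(A) \subseteq A$; combining this with the reverse inclusion $A \subseteq \mathfrak{g}_\S(A)$ from \reflem{A_in_g} produces $A = \mathfrak{g}_\S(A)$, and \refprop{inv} then certifies that $A$ is invariant.

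I do not anticipate a genuine obstacle: the statement is essentially a bookkeeping consequence of \refprop{inv} and \reflem{A_in_g}. The only point requiring mild care is that the convex-hull operator is both extensive ($B \subseteq \conv B$ for every $B$) and idempotent on convex sets ($\conv A = A$ when $A$ is convex); these two facts are precisely what allow the closing convex hull in $\mathfrak{G}_\S$ to be absorbed in each direction. It is worth remarking that convexity of $A$ is really used only in the forward implication, to collapse $\conv A$ back to $A$; in the converse the hypothesis $A = \mathfrak{G}_\S(A)$ already forces $A$ to be convex, since it equals a convex hull.
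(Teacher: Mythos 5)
Your proposal is correct and follows essentially the same route as the paper: both directions are reduced to the non-convex characterization $A = \mathfrak{g}_\S(A)$ (Proposition~\ref{prop:inv}) together with the universal inclusion $A \subseteq \mathfrak{g}_\S(A)$. The only cosmetic difference is that the paper establishes $\mathfrak{G}_\S(A)\subseteq A$ for an invariant convex $A$ by contradiction, writing points of the hull as explicit convex combinations, whereas you absorb the hull via idempotence of $\conv$ on convex sets; the two arguments are interchangeable.
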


\begin{proof}
Observe that by Lemma \ref{lem:A_in_g}, we have for any $A$ that
\[
A \subseteq \mathfrak{g}_\S(A) \subseteq \mathfrak{G}_\S(A).
\]
Thus, we next focus on the condition $A \supseteq \mathfrak{G}_\S(A)$.

$(\Rightarrow)$ Assume that $A \supseteq \mathfrak{G}_\S(A)$. Then clearly $A \supseteq \mathfrak{g}_\S(A)$, and by Proposition \ref{prop:inv}, $A$ is an invariant set.

$(\Leftarrow)$ Assume that $A$ is a convex invariant set. Also, assume by the way of contradiction that $A \not\supseteq \mathfrak{G}_\S(A)$. Namely, there exists $v \in \mathfrak{G}_\S(A)$ such that $v \notin A$. But every $v \in \mathfrak{G}_\S(A)$ can be written as $v = \sum_i \beta_i v_i$ with $v_i \in \mathfrak{g}_\S(A)$ and $\sum_i \beta_i = 1$, $\beta \geq 0$. By the invariance of $A$, we thus have that $v_i \in A$, but $v = \sum_i \beta_i v_i \notin A$, a contradiction to convexity of $A$.
\end{proof}

\begin{lemma}[Monotonicity of $\mathfrak{G}_\S$] \label{lem:g_prop_conv}
If $A \subseteq B$ then $\mathfrak{G}_\S(A) \subseteq \mathfrak{G}_\S(B)$.
\end{lemma}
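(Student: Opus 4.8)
The plan is to prove this monotonicity statement by composing two monotonicity facts that are already available in the excerpt. Recall that $\mathfrak{G}_\S(A) = \conv{\left(\mathfrak{g}_\S(A)\right)}$ by \refdef{g_conv}, so the claim reduces to understanding how the two operators $\mathfrak{g}_\S$ and $\conv$ interact with set inclusion. Since $\mathfrak{G}_\S$ is by definition the composition $\conv \circ \, \mathfrak{g}_\S$, and inclusion is preserved under each factor, it will be preserved under the composition.

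Concretely, I would carry out two short steps. First, I would invoke the monotonicity of $\mathfrak{g}_\S$ established in \reflem{g_prop}~(i): from the hypothesis $A \subseteq B$ we immediately obtain
\[
\mathfrak{g}_\S(A) \subseteq \mathfrak{g}_\S(B).
\]
Second, I would apply the fact that the convex hull is non-decreasing (Proposition~\ref{lem:conv_mono}) to this inclusion, yielding
\[
\conv{\left(\mathfrak{g}_\S(A)\right)} \subseteq \conv{\left(\mathfrak{g}_\S(B)\right)},
\]
which is precisely $\mathfrak{G}_\S(A) \subseteq \mathfrak{G}_\S(B)$ by \refdef{g_conv}, completing the argument.

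There is essentially no hard part here: the lemma is a direct corollary of two previously established monotonicity results, and the entire content lies in observing that $\mathfrak{G}_\S$ factors as the convex hull applied after $\mathfrak{g}_\S$. The only thing to be careful about is to reference the correct prerequisite for each operator, namely \reflem{g_prop}~(i) for the inner operator and the convex-hull monotonicity for the outer operator, and to note that both of these hold for arbitrary (not necessarily convex) sets $A$ and $B$, so no additional hypotheses on $A$ or $B$ are needed.
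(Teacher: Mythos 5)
Your proposal is correct and follows exactly the same route as the paper's own proof: apply \reflem{g_prop}~(i) to get $\mathfrak{g}_\S(A) \subseteq \mathfrak{g}_\S(B)$, then apply the monotonicity of the convex hull (Proposition~\ref{lem:conv_mono}). Your version just spells out the two-step composition more explicitly than the paper's one-line justification.
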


\begin{proof}
The result follows by Lemma \ref{lem:g_prop} (i) and the monotonicity of the convex hull (Lemma \ref{lem:conv_mono}).
\end{proof}

\begin{proposition}[Monotonicity of the iterates] \label{prop:mono_conv}
Let $A \subseteq \reals^d$ be a convex set. Then the iterates $\mathfrak{G}^n_\S(A)$ are monotonic, in the sense that $\mathfrak{G}^n_\S(A) \subseteq \mathfrak{G}^{n'}_\S(A)$ for all $n \leq n'$. Thus, there exists
\[
\mathfrak{G}^\infty_\S(A) := \lim_{n \rightarrow \infty} \mathfrak{G}^n_\S(A) = \bigcup_{n\geq0} \mathfrak{G}^n_\S(A).
\]
\end{proposition}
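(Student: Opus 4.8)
The plan is to mirror the proof of \refprop{mono} line for line, substituting its two ingredients --- the extensivity \reflem{A_in_g} and the monotonicity of $\mathfrak{g}_\S$ from \reflem{g_prop}(i) --- by their convex analogues.

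First I would record the \emph{extensivity} of $\mathfrak{G}_\S$, namely that $A \subseteq \mathfrak{G}_\S(A)$ for every set $A$. This is obtained by chaining \reflem{A_in_g}, which yields $A \subseteq \mathfrak{g}_\S(A)$, with the extensivity of the convex-hull operator, giving $\mathfrak{g}_\S(A) \subseteq \conv\left(\mathfrak{g}_\S(A)\right) = \mathfrak{G}_\S(A)$; this exact inclusion already appears at the start of the proof of \refprop{inv_conv}.

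Next I would show $\mathfrak{G}^n_\S(A) \subseteq \mathfrak{G}^{n+1}_\S(A)$ for all $n \geq 0$ by induction on $n$. For the base case, \refdef{g_iter_conv} sets $\mathfrak{G}^0_\S(A) = A$, and the extensivity just recorded gives $A \subseteq \mathfrak{G}_\S(A) = \mathfrak{G}^1_\S(A)$. For the inductive step I would apply $\mathfrak{G}_\S$ to both sides of the hypothesis $\mathfrak{G}^n_\S(A) \subseteq \mathfrak{G}^{n+1}_\S(A)$ and use the monotonicity of $\mathfrak{G}_\S$ (\reflem{g_prop_conv}) to conclude $\mathfrak{G}^{n+1}_\S(A) \subseteq \mathfrak{G}^{n+2}_\S(A)$. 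Transitivity of set inclusion then upgrades this one-step bound to $\mathfrak{G}^n_\S(A) \subseteq \mathfrak{G}^{n'}_\S(A)$ for all $n \leq n'$.

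Finally, a nested non-decreasing family of sets has a set-theoretic limit equal to its union, so both the existence of $\mathfrak{G}^\infty_\S(A)$ and the identity $\mathfrak{G}^\infty_\S(A) = \bigcup_{n \geq 0}\mathfrak{G}^n_\S(A)$ follow immediately. I expect no genuine obstacle: the argument is purely formal and structurally identical to the non-convex \refprop{mono}. The convexity hypothesis on $A$ plays no role in the monotonicity itself --- it merely guarantees that $\mathfrak{G}^0_\S(A) = A$ is already convex, keeping the entire chain of iterates convex and consistent with the invariance characterization of \refprop{inv_conv}.
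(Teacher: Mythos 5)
Your proposal is correct and follows essentially the same route as the paper: the paper's proof likewise derives $A \subseteq \mathfrak{G}_\S(A)$ from \reflem{A_in_g} (together with the extensivity of the convex hull, which you make explicit) and then applies the monotonicity of $\mathfrak{G}_\S$ from \reflem{g_prop_conv} recursively, with the limit being the union of the nested iterates. Your write-up is, if anything, slightly more explicit about the chaining through $\conv$ and the induction, but there is no substantive difference.
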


\begin{proof}
By Lemma \ref{lem:A_in_g}, we have that $A \subseteq \mathfrak{G}_\S(A)$. The result then follows by applying the monotonicity property of $\mathfrak{G}_\S$ (Lemma \ref{lem:g_prop_conv}) for this inclusion recursively, and using Definition \ref{def:g_iter}.
\end{proof}

\begin{theorem} \label{theo:main_single_conv}
Let $A \subseteq \reals^d$ be a convex set. The set $\mathfrak{G}^\infty_\S(A)$ is the \emph{minimal convex} invariant set containing the set $A$. 
\end{theorem}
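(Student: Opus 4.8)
The plan is to follow the same two-part template as the proof of Theorem~\ref{theo:main_single} (invariance first, then minimality), with one genuinely new ingredient forced on us by the convex hull appearing in $\mathfrak{G}_\S$.

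First I would record the structural fact that makes everything work: since the iterates $\mathfrak{G}^n_\S(A)$ form an increasing chain (Proposition~\ref{prop:mono_conv}) of \emph{convex} sets --- each $\mathfrak{G}^n_\S(A)$ for $n\geq 1$ is a convex hull, and $\mathfrak{G}^0_\S(A)=A$ is convex by hypothesis --- their union $\mathfrak{G}^\infty_\S(A)$ is again convex. Indeed, any two points of the union lie together in a single $\mathfrak{G}^N_\S(A)$ (take $N$ to be the larger of the two indices), and that set is convex, so the segment between them stays inside it and hence inside the union. This is the crucial difference from the non-convex case: convexity is preserved under \emph{increasing} unions, even though the convex hull does not commute with arbitrary unions.

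For invariance I would show $\mathfrak{G}_\S(\mathfrak{G}^\infty_\S(A)) = \mathfrak{G}^\infty_\S(A)$ and then invoke Proposition~\ref{prop:inv_conv}, which applies because $\mathfrak{G}^\infty_\S(A)$ is convex by the previous step. The computation runs as follows: the operator $\mathfrak{g}_\S$ commutes with the increasing union (the additivity argument of Lemma~\ref{lem:g_prop} extends verbatim to arbitrary unions, using that $\conv{\S} + \bigcup_n B_n = \bigcup_n(\conv{\S} + B_n)$), so $\mathfrak{g}_\S(\mathfrak{G}^\infty_\S(A)) = \bigcup_n \mathfrak{g}_\S(\mathfrak{G}^n_\S(A))$. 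Taking convex hulls, I then need the identity $\conv\!\big(\bigcup_n B_n\big) = \bigcup_n \conv(B_n)$ for the increasing chain $B_n := \mathfrak{g}_\S(\mathfrak{G}^n_\S(A))$ (increasing by monotonicity of the iterates and of $\mathfrak{g}_\S$, Lemma~\ref{lem:g_prop}(i)); this holds because $\bigcup_n \conv(B_n)$ is an increasing union of convex sets, hence convex, and it contains $\bigcup_n B_n$, so it contains the convex hull of the latter, while the reverse inclusion is trivial. This rewrites the left-hand side as $\bigcup_n \conv(\mathfrak{g}_\S(\mathfrak{G}^n_\S(A))) = \bigcup_{n\geq 1}\mathfrak{G}^n_\S(A) = \mathfrak{G}^\infty_\S(A)$, where the last equality uses $A = \mathfrak{G}^0_\S(A) \subseteq \mathfrak{G}^1_\S(A)$ (itself a consequence of Lemma~\ref{lem:A_in_g}).

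Minimality then goes exactly as in Theorem~\ref{theo:main_single}: given any convex invariant set $Q \supseteq A$, Proposition~\ref{prop:inv_conv} gives $\mathfrak{G}_\S(Q) = Q$, and monotonicity of $\mathfrak{G}_\S$ (Lemma~\ref{lem:g_prop_conv}) yields $\mathfrak{G}^n_\S(A) \subseteq \mathfrak{G}^n_\S(Q) = Q$ by induction on $n$, so that $\mathfrak{G}^\infty_\S(A) = \bigcup_n \mathfrak{G}^n_\S(A) \subseteq Q$. The main obstacle --- and the only place the argument departs from the non-convex proof --- is the interchange of convex hull and union in the invariance step; everything hinges on restricting that interchange to the increasing chain of iterates, where the preservation of convexity under increasing unions rescues an identity that would otherwise fail.
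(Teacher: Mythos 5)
Your proof is correct and follows the paper's two-part template (invariance of the limit, then minimality via monotonicity), and your minimality argument is identical to the paper's. The one place you diverge is the invariance step: the paper never interchanges convex hull with the union at all. It applies the \emph{non-convex} operator $\mathfrak{g}_\S$ to $\mathfrak{G}^\infty_\S(A)$, uses additivity of $\mathfrak{g}_\S$ over the union, bounds $\mathfrak{g}_\S(\mathfrak{G}^n_\S(A)) \subseteq \mathfrak{G}_\S(\mathfrak{G}^n_\S(A)) = \mathfrak{G}^{n+1}_\S(A)$, and concludes $\mathfrak{g}_\S(\mathfrak{G}^\infty_\S(A)) \subseteq \mathfrak{G}^\infty_\S(A)$, which already gives invariance by Proposition~\ref{prop:inv}; the convex-hull/union interchange you identify as the main obstacle is thereby sidestepped entirely. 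Your route --- proving the stronger fixed-point identity $\mathfrak{G}_\S(\mathfrak{G}^\infty_\S(A)) = \mathfrak{G}^\infty_\S(A)$ via the identity $\conv\bigl(\bigcup_n B_n\bigr) = \bigcup_n \conv(B_n)$ for increasing chains --- is also valid, just slightly more work. On the other hand, you make explicit something the paper leaves implicit but actually needs for the statement to make sense, namely that $\mathfrak{G}^\infty_\S(A)$ is itself convex as an increasing union of convex sets; that observation is a genuine (small) improvement in completeness over the paper's write-up.
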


\begin{proof}
The proof is similar to that of Theorem \ref{theo:main_single}. The invariance follows by
\begin{eqnarray*}
\mathfrak{g}_\S( \mathfrak{G}^\infty_\S(A)) &=& \mathfrak{g}_\S\left( \bigcup_{n \geq 0}  \mathfrak{G}^n_\S(A) \right) \\
&=& \bigcup_{n \geq 0} \mathfrak{g}_\S\left(\mathfrak{G}^n_\S(A) \right) \\
&\subseteq& \bigcup_{n \geq 0} \mathfrak{G}_\S\left(\mathfrak{G}^n_\S(A) \right) \\
&=&\bigcup_{n \geq 1} \mathfrak{G}^n_\S(A) \\
&=& A \cup \bigcup_{n \geq 1} \mathfrak{G}^n_\S(A) = \mathfrak{G}^\infty_\S(A).
\end{eqnarray*}

To prove minimality, let $Q$ be any invariant convex set containing $A$. Then
\[
Q = \mathfrak{G}_\S( Q ) \supseteq \mathfrak{G}_\S( A ),
\]
where the first inclusion follows by Proposition \ref{prop:inv_conv} and the second inclusion follows by the monotonicity property (Lemma \ref{lem:g_prop_conv}). Applying these rules recursively, we obtain that
\[
Q \supseteq \mathfrak{G}_\S^n( A ), \quad n \geq 0.
\]
This implies that
\[
Q \supseteq \bigcup_{n=0}^N \mathfrak{G}^n_\S(A), \quad N \geq 0
\]
and hence
\[
Q \supseteq \mathfrak{G}^\infty_\S(A).
\]

This completes the proof of the Theorem.
\end{proof}

\subsubsection{Multiple-Set Case} \label{sec:proof_mult_g}

Recall \refdef{g} of the operator $\mathfrak{g}_\SS$. We next give the proofs of Proposition \ref{prop:inv_mult} and Theorem \ref{theo:min_G} presented in \refsec{perf}. We also prove the convex counterpart of Theorem \ref{theo:min_G}, namely Theorem \ref{theo:min_G_conv} (i). The proof of boundedness of the minimal invariant sets (namely, proofs of Theorem \ref{theo:bound_G} and Theorem \ref{theo:min_G_conv} (ii)) is given in \refsec{proof_bounded}.

\begin{proof}[Proof of Proposition \ref{prop:inv_mult}]
By Lemma \ref{lem:A_in_g}, $A \subseteq \mathfrak{g}_{\S} (A)$ for every $\S \in \SS$. 
Hence, $A \subseteq \mathfrak{g}_\SS (A)$ and part (i) of the proposition follows.

We next prove the two directions of part (ii).

$(\Rightarrow)$ Assume $A \supseteq \mathfrak{g}_\SS (A)$. Then, also $A \supseteq \mathfrak{g}_{\S} (A)$ for all $\S \in \SS$. 
Thus, by Theorem \ref{theo:main_single}, $A$ is invariant for all $\S \in \SS$, 
hence jointly invariant.

$(\Leftarrow)$ Assume $A$ is invariant for all $\S \in \SS$. 
Then $A \supseteq \mathfrak{g}_{\S} (A)$ all $\S \in \SS$, 
therefore also $A \supseteq \bigcup_{\S \in \SS } \mathfrak{g}_{\S} (A) = \mathfrak{g}_\SS (A)$.
\end{proof}

To prove Theorem \ref{theo:min_G}, we will need the following auxiliary results.
\begin{lemma}[Properties of $\mathfrak{g}_\SS$] \label{lem:g_prop_mult}
\begin{enumerate}
    \item[] 
    \item[(i)] (Monotonicity) If $A \subseteq B$ then $\mathfrak{g}_\SS (A) \subseteq \mathfrak{g}_\SS (B)$.
    \item[(ii)] (Additivity) $\mathfrak{g}_\SS (A \cup B) = \mathfrak{g}_\SS (A) \cup \mathfrak{g}_\SS (B)$.
\end{enumerate}
    
\end{lemma}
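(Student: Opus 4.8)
The plan is to deduce both properties directly from their single-set counterparts in Lemma~\ref{lem:g_prop}, exploiting that $\mathfrak{g}_\SS$ is by definition nothing more than the union $\bigcup_{\S \in \SS} \mathfrak{g}_\S$ of the single-set operators. In other words, each property lifts from individual $\S$ to the whole collection because unions respect inclusions and commute with one another. No new geometric input is needed beyond what Lemma~\ref{lem:g_prop} already supplies.

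For part~(i), I would start from the hypothesis $A \subseteq B$ and apply the single-set monotonicity (Lemma~\ref{lem:g_prop}~(i)) to conclude that $\mathfrak{g}_\S(A) \subseteq \mathfrak{g}_\S(B)$ for every fixed $\S \in \SS$. Since an inclusion between families of sets is preserved when one forms unions termwise, taking the union over all $\S \in \SS$ yields
\[
\mathfrak{g}_\SS(A) = \bigcup_{\S \in \SS} \mathfrak{g}_\S(A) \subseteq \bigcup_{\S \in \SS} \mathfrak{g}_\S(B) = \mathfrak{g}_\SS(B),
\]
which is exactly the claim.

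For part~(ii), I would chain the definition of $\mathfrak{g}_\SS$ with the single-set additivity (Lemma~\ref{lem:g_prop}~(ii)) and then reorganize the resulting double union. Concretely,
\[
\mathfrak{g}_\SS(A \cup B) = \bigcup_{\S \in \SS} \mathfrak{g}_\S(A \cup B) = \bigcup_{\S \in \SS} \bigl( \mathfrak{g}_\S(A) \cup \mathfrak{g}_\S(B) \bigr) = \Bigl( \bigcup_{\S \in \SS} \mathfrak{g}_\S(A) \Bigr) \cup \Bigl( \bigcup_{\S \in \SS} \mathfrak{g}_\S(B) \Bigr) = \mathfrak{g}_\SS(A) \cup \mathfrak{g}_\SS(B),
\]
where the third equality is just the commutativity and associativity of set union (the standard fact that $\bigcup_i (X_i \cup Y_i) = (\bigcup_i X_i) \cup (\bigcup_i Y_i)$).

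There is no genuine obstacle here: the lemma is a purely formal consequence of Lemma~\ref{lem:g_prop} together with elementary properties of unions, and both parts are one-line deductions once the single-set versions are in hand. The only point requiring any care is the bookkeeping in the rearrangement of the nested union in part~(ii), but this is routine and requires no further hypotheses on $\SS$ (in particular, the collection need not be finite for this argument to go through).
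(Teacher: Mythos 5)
Your proof is correct and takes essentially the same route as the paper, which simply notes that the result follows from Lemma~\ref{lem:g_prop} together with the definition of $\mathfrak{g}_\SS$ as the union of the $\mathfrak{g}_\S$ over $\S \in \SS$. You have merely written out the (routine) union manipulations that the paper leaves implicit.
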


\begin{proof}
This result follows trivially by Lemma \ref{lem:g_prop} and by using the definition of $\mathfrak{g}_\SS$ as the union of $\mathfrak{g}_{\S}$ over $\S \in \SS$.
\end{proof}

\begin{proposition}[Monotonicity of the iterates] \label{prop:mono_mult}
Let $A \subseteq \reals^d$. Then the iterates $\mathfrak{g}^n_\SS (A)$ are monotonic, in the sense that $\mathfrak{g}^n_\SS (A) \subseteq \mathfrak{g}^{n'}_\SS (A)$ for all $n \leq n'$. Thus, there exists
\[
\mathfrak{g}^\infty_\SS (A) := \lim_{n \rightarrow \infty} \mathfrak{g}^n_\SS (A) = \bigcup_{n\geq0} \mathfrak{g}^n_\SS (A).
\]
\end{proposition}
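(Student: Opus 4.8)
The plan is to mirror exactly the proof of the single-set counterpart, \refprop{mono}, which derived monotonicity from just two ingredients: the extensivity property $A \subseteq \mathfrak{g}_\S(A)$ and the monotonicity of the operator. In the multiple-set setting these two ingredients are already available to me: \refprop{inv_mult}~(i) supplies the extensivity inclusion $A \subseteq \mathfrak{g}_\SS(A)$ for every $A \subseteq \reals^d$, and \reflem{g_prop_mult}~(i) supplies monotonicity of $\mathfrak{g}_\SS$, namely that $A \subseteq B$ implies $\mathfrak{g}_\SS(A) \subseteq \mathfrak{g}_\SS(B)$.

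First I would establish the single-step inclusion $\mathfrak{g}^n_\SS(A) \subseteq \mathfrak{g}^{n+1}_\SS(A)$ for all $n \geq 0$ by induction. The base case $n=0$ reads $A = \mathfrak{g}^0_\SS(A) \subseteq \mathfrak{g}_\SS(A) = \mathfrak{g}^1_\SS(A)$, which is precisely \refprop{inv_mult}~(i). For the inductive step, assuming $\mathfrak{g}^{n-1}_\SS(A) \subseteq \mathfrak{g}^{n}_\SS(A)$, I would apply the operator $\mathfrak{g}_\SS$ to both sides and invoke its monotonicity (\reflem{g_prop_mult}~(i)) to obtain $\mathfrak{g}^{n}_\SS(A) = \mathfrak{g}_\SS(\mathfrak{g}^{n-1}_\SS(A)) \subseteq \mathfrak{g}_\SS(\mathfrak{g}^{n}_\SS(A)) = \mathfrak{g}^{n+1}_\SS(A)$.

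Having the chain $\mathfrak{g}^0_\SS(A) \subseteq \mathfrak{g}^1_\SS(A) \subseteq \mathfrak{g}^2_\SS(A) \subseteq \cdots$, the general statement $\mathfrak{g}^n_\SS(A) \subseteq \mathfrak{g}^{n'}_\SS(A)$ for $n \leq n'$ follows by transitivity of set inclusion. Since the iterates form a nondecreasing nested sequence of sets, the limit exists and is given by the union $\mathfrak{g}^\infty_\SS(A) = \bigcup_{n \geq 0} \mathfrak{g}^n_\SS(A)$, which is the asserted characterization.

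I do not expect any genuine obstacle here: the statement is a formal consequence of extensivity plus monotonicity, exactly as in \refprop{mono}, and the only thing to be careful about is citing the \emph{collection} versions \refprop{inv_mult}~(i) and \reflem{g_prop_mult}~(i) rather than their single-set analogues. In fact, the cleanest writeup simply states that the proposition is a direct consequence of these two results, in complete parallel to the one-line proof of \refprop{mono}.
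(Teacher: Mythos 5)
Your proposal is correct and matches the paper's argument exactly: the paper proves this proposition in one line as a direct consequence of \reflem{g_prop_mult}~(i) (monotonicity of $\mathfrak{g}_\SS$) and \refprop{inv_mult}~(i) (extensivity $A \subseteq \mathfrak{g}_\SS(A)$), which are precisely the two ingredients you identify and use. Your explicit induction merely spells out the details the paper leaves implicit.
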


\begin{proof}
This proposition is a direct consequence of Lemma \ref{lem:g_prop_mult} (i) and \refprop{inv_mult} (i).  
\end{proof}

\begin{proof}[Proof of Theorem \ref{theo:min_G}]
The proof is exactly the same as that of Theorem \ref{theo:main_single} by using the properties of $\mathfrak{g}^\infty_\SS$ established in Lemma \ref{lem:g_prop_mult} and Proposition \ref{prop:mono_mult}.
\end{proof}

Finally, the proof of Theorem \ref{theo:min_G_conv} (i) is exactly the same as that of Theorem \ref{theo:main_single_conv} given the results for the operator $\mathfrak{g}_\SS (A)$ proved above.

\subsubsection{Proof of Boundedness of the Minimal $G$-Invariant Set} \label{sec:proof_bounded}
In this section, we prove Theorem \ref{theo:bound_G} and Theorem \ref{theo:min_G_conv} (ii). To that end, we use the results from \cite{adler2005, tresser2007}, where it was proven that there exists a convex bounded invariant set that contains the origin in the special case where a set $\S$ is the collection of \emph{corner points} (vertexes) of $\conv \S$. 

For any $\S \in \SS$, consider a partition of $\S$ imposed by $\conv S$ into two disjoint sets: 
\begin{enumerate}
    \item $\S_c$: the ``corner points'' of $\S$, namely $\S_c \subseteq \S$ such that $\S_c \subseteq \partial \, \conv{S}$.
    \item $\S_{nc}$: the ``inner points'' of $\S$, namely $\S_{nc} \subseteq \S$ such that $\S_{nc} \not\subseteq \partial \, \conv{S}$.
\end{enumerate}
Note that in this definition, we include in the corner points also any points in $\S$ that lie on the edges of $\conv{\S}$. It is easy to see that the Voronoi cells associated with $\S_{nc}$ are bounded, while the Voronoi cells associated with $\S_{c}$ are unbounded. Moreover, $\conv{\S} = \conv{\S_c}$.

We have the following basic result about the Voronoi cells of $\S$ and $\S_c$.

\begin{lemma} \label{lem:vor_Sc}
We have that $V_\S(c) \subseteq V_{\S_c}(c)$ for all $c \in \S_c$.
\end{lemma}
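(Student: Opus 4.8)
The plan is to observe that this inclusion is nothing more than the monotonicity of Voronoi cells under enlargement of the generating set, and the work is entirely in unwinding the definition. First I would note that since $\S_c \subseteq \S$ by construction and $c \in \S_c$, the point $c$ also lies in $\S$, so both cells $V_\S(c)$ and $V_{\S_c}(c)$ are well-defined. Recalling the definition, $V_\S(c)$ is the set of $x \in \reals^d$ satisfying the family of inequalities $\|x-c\| \leq \|x-c'\|$ indexed by all $c' \in \S$, whereas $V_{\S_c}(c)$ is cut out by the same inequalities indexed only over $c' \in \S_c$. Because $\S_c \subseteq \S$, the defining family for $V_\S(c)$ is a superset of the defining family for $V_{\S_c}(c)$; imposing more constraints can only shrink the feasible set, which gives the claimed inclusion.

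Concretely, I would take an arbitrary $x \in V_\S(c)$, so that $\|x-c\| \leq \|x-c'\|$ holds for every $c' \in \S$, and then simply restrict attention to those $c'$ lying in the subset $\S_c$. This yields $\|x-c\| \leq \|x-c'\|$ for all $c' \in \S_c$, which is exactly the membership condition $x \in V_{\S_c}(c)$. Hence $V_\S(c) \subseteq V_{\S_c}(c)$.

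There is essentially no obstacle in this argument: the only points worth flagging are that the non-strict inequality $\leq$ in the definition makes the reasoning insensitive to ties on cell boundaries, and that one must confirm $c$ generates a cell in both diagrams, which is immediate from $c \in \S_c \subseteq \S$. The lemma is thus a routine set-theoretic monotonicity statement; its real role is downstream, where it lets one bound the (bounded) Voronoi cells attached to inner points $\S_{nc}$ by comparison with the corner-point diagram, connecting to the uniform-boundedness hypothesis (iii) of Theorem~\ref{theo:bound_G}.
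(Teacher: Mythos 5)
Your proof is correct and follows exactly the same argument as the paper's: take $x \in V_\S(c)$, note the defining inequalities hold for all $c' \in \S$, and restrict to the subset $\S_c \subseteq \S$ to conclude $x \in V_{\S_c}(c)$. Nothing further is needed.
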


\begin{proof}
Let $c \in \S_c$ and $x \in V_\S(c)$. By the definition of the Voronoi cell
\[
\|x - c\| \leq \|x - c'\|, \quad \forall c' \in \S.
\]
However, since $\S_c \subseteq \S$, it is also true that
\[
\|x - c\| \leq \|x - c'\|, \quad \forall c' \in \S_c,
\]
implying that $x \in V_{\S_c}(c)$.
\end{proof}

In \cite{adler2005, tresser2007}, it was shown that for the collection $\SS_c = \{\S_c, \S \in \SS\}$, under the hypothesis of Theorem \ref{theo:bound_G},  for any $r \geq 0$ large enough, one can construct a \emph{bounded convex} invariant set $Q_r$ which contains a ball with radius $r$ centered in the origin. That is
\begin{equation} \label{eqn:inv_Sc}
\forall \S_c \in \SS_c, \quad \mathfrak{g}_{\S_c}(Q_r) = Q_r.
\end{equation}
We next extend this proof to the case where $\S_{nc} \neq \emptyset$.

By the definition of $\mathfrak{g}$, we have that 
\begin{eqnarray*}
&& \mathfrak{g}_{\S}(Q_r) =  \bigcup_{c \in \S } (\P+Q_r) \cap V_{\S}(c) -c \\
&=& \left [ \bigcup_{c \in \S_c } (\P+Q_r) \cap V_{\S}(c) -c \right ] \bigcup \\
&& \hspace{2cm} \left [ \bigcup_{c \in \S_{nc} } (\P+Q_r) \cap V_{\S}(c) -c \right ] \\
&\subseteq& \left [ \bigcup_{c \in \S_c } (\P+Q_r) \cap V_{\S_c}(c) -c \right ] \bigcup  \\
&& \hspace{2cm}\left [ \bigcup_{c \in \S_{nc} } (\P+Q_r) \cap V_{\S}(c) -c \right ] \\
&=& \mathfrak{g}_{\S_c}(Q_r) \bigcup \left [ \bigcup_{c \in \S_{nc} } (\P+Q_r) \cap V_{\S}(c) -c \right ], 
\end{eqnarray*}
where the set inclusion follows by \reflem{vor_Sc}. 
Now observe that there exists $r_0 \geq 0$ such that for all $r \geq r_0$,
\[
(\P+Q_r) \cap V_{\S}(c)  = V_{\S}(c), \quad c \in \S_{nc}
\]
as the Voronoi cells for $c \in \S_{nc}$ are bounded. Thus, for $r \geq r_0$, 
\[
\bigcup_{c \in \S_{nc} } (\P+Q_r) \cap V_{\S}(c) -c
\]
is a bounded set. Moreover, since the Voronoi cells for $c \in \S_{c}$ are unbounded, 
\[
\lim_{r \rightarrow \infty} \bigcup_{c \in \S_c } (\P+B(0, r)) \cap V_{\S_c}(c) -c = \reals^d.
\]
Hence, there exists $r_1 \geq r_0$, such that for all $r \geq r_1$
\begin{align}
\bigcup_{c \in \S_{nc} }& (\P+Q_r) \cap V_{\S}(c) -c \notag \\
&\subseteq \bigcup_{c \in \S_c } (\P+B(0, r)) \cap V_{\S_c}(c) -c \notag \\
&\subseteq \bigcup_{c \in \S_c } (\P+Q_r) \cap V_{\S_c}(c) -c \notag \\
&= \mathfrak{g}_{\S_c}(Q_r) \label{eqn:unbounded_domin}. 
\end{align}

By the uniform boundedness of $V_\S(c)$ for $c\in \S_{nc}$ and $\S \in \SS$, it follows that there exists $r_1^* < \infty$ such that \eqref{eqn:unbounded_domin} holds for all $r \geq r_1^*$ and all $\S \in \SS$.
Thus, for all $\S \in \SS$, 
 $\mathfrak{g}_{\S}(Q_r) \subseteq \mathfrak{g}_{\S_c}(Q_r) = Q_r$, where the last equality follows by the invariance of $Q_r$ for $\SS_c$ \eqref{eqn:inv_Sc}. Therefore,  for $r \geq r_1^*$,
\[
\mathfrak{g}_{\SS}(Q_r) = \bigcup_{\S \in \SS} \mathfrak{g}_{\S}(Q_r) \subseteq Q_r,
\]
implying that $Q_r$ is a $G$-invariant set with respect to $\SS$ by \refprop{inv_mult}. 
Since $Q_r$ is a bounded convex set, the results of both Theorem \ref{theo:bound_G} and Theorem \ref{theo:min_G_conv} (ii)  follow.

\subsection{Proofs for the Case of Persistent Prediction}

In this section, we present the proofs for the results of \refsec{persist}.

\subsubsection{Single-Set Case} \label{sec:proof_single_p}
We note that the proofs for minimality of $F$-invariant sets for the case of a single set $\S$ (namely, involving the operator $\mathfrak{p}_\S$ in \refdef{p}) are given in \cite{nowicki2004} for the case where a set $\S$ is the collection of \emph{corner points} (vertexes) of $\conv \S$. This proof extends exactly in the same way as that of $G$-invariance provided in \refsec{proof_perf}, thus we present the next results without a proof.


\begin{lemma}[Properties of $\mathfrak{p}_\S$; Extension of Lem. 3.11 in \cite{nowicki2004}] \label{lem:p_prop}
\begin{enumerate}
    \item[] 
    \item[(i)] (Monotonicity) If $A \subseteq B$ then $\mathfrak{p}_\S(A) \subseteq \mathfrak{p}_\S(B)$.
    \item[(ii)] (Additivity) $\mathfrak{p}_\S(A \cup B) = \mathfrak{p}_\S(A) \cup \mathfrak{p}_\S(B)$.
\end{enumerate}
\end{lemma}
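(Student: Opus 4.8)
The plan is to prove both parts by unwinding \refdef{p} and applying elementary operations on sets. Recall that
\[
\mathfrak{p}_\S(D) = \P + \bigcup_{c \in \S} \vsect{D} - c,
\]
where $\vsect{D} = D \cap V_\S(c)$. Unlike the operator $\mathfrak{g}_\S$, here the Voronoi-cell intersection is applied to $D$ \emph{directly} and $\P$ is Minkowski-added only at the outermost layer. This makes the argument a layer-by-layer verification that each of the four operations --- intersection with $V_\S(c)$, translation by $-c$, union over $c \in \S$, and Minkowski sum with $\P$ --- respects the relevant set-algebraic identity.

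For part (i), I would fix $A \subseteq B$ and propagate the inclusion through each layer. Intersection is monotone, so for every $c \in \S$ we have $\vsect{A} = A \cap V_\S(c) \subseteq B \cap V_\S(c) = \vsect{B}$. Translation by $-c$ preserves inclusions, hence $\vsect{A} - c \subseteq \vsect{B} - c$, and taking the union over $c \in \S$ preserves it as well. Finally the Minkowski sum is monotone in each argument, so adding $\P$ yields $\mathfrak{p}_\S(A) \subseteq \mathfrak{p}_\S(B)$.

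For part (ii), I would start from $\mathfrak{p}_\S(A \cup B)$ and push the union outward through each layer. By distributivity of intersection over union, $(A \cup B) \cap V_\S(c) = (A \cap V_\S(c)) \cup (B \cap V_\S(c))$ for every $c$, and translation by $-c$ then distributes over this union. Taking the union over $c \in \S$ and regrouping gives
\[
\bigcup_{c \in \S} \big( (A \cup B) \cap V_\S(c) \big) - c = \Big( \bigcup_{c \in \S} \vsect{A} - c \Big) \cup \Big( \bigcup_{c \in \S} \vsect{B} - c \Big).
\]
Adding $\P$ and invoking \refprop{mink_union} (the Minkowski sum distributes over unions) pushes $\P$ into both terms, which are then recognized as $\mathfrak{p}_\S(A)$ and $\mathfrak{p}_\S(B)$, establishing the claimed equality.

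There is no genuine obstacle here; the only care required is tracking the order of the four operations and matching each to its monotonicity or distributive law. It is worth noting that additivity is special to the non-convex operator $\mathfrak{p}_\S$: for its convex-hull variant $\mathfrak{P}_\S$ only the inclusion $\mathfrak{P}_\S(A) \cup \mathfrak{P}_\S(B) \subseteq \mathfrak{P}_\S(A \cup B)$ survives, since the convex hull does not distribute over unions. Consequently, monotonicity of the convex iterates must instead be derived from $D \subseteq \mathfrak{p}_\S(D)$ together with monotonicity of the convex hull (\reflem{conv_mono}), exactly as in the perfect-prediction case.
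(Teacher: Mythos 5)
Your proof is correct, and it matches the paper's treatment: the paper states this lemma without proof, noting that it ``extends exactly in the same way'' as the corresponding properties of $\mathfrak{g}_\S$ (Lemma~\ref{lem:g_prop}), whose proof rests on the same elementary facts you use --- monotonicity/distributivity of intersection, translation and union, plus Proposition~\ref{prop:mink_union} for the Minkowski sum. Your layer-by-layer verification (and the closing remark on why only monotonicity survives for $\mathfrak{P}_\S$) is exactly the intended argument.
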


\begin{proposition}[Extension of Lemma 3.2 and Proposition 3.12 in \cite{nowicki2004}] \label{prop:inv_p}
Any $D \subseteq \reals^d$ satisfies $D \subseteq \mathfrak{p}_\S(D)$. Moreover, 
$D$ is $F$-invariant with respect to $\S$ if and only if 
$
\mathfrak{p}_\S(D) \subseteq D.
$
\end{proposition}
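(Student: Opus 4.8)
The plan is to prove this proposition as the persistent-prediction counterpart of \reflem{A_in_g} and \refprop{inv}, observing that $\mathfrak{p}_\S$ is to the map $F_{\S,x}(z) = z + x - \vor[\S]{z}$ what $\mathfrak{g}_\S$ is to $G$. The only structural difference is that in $F$ the request $x \in \P$ is added \emph{after} projecting, so in $\mathfrak{p}_\S$ it appears as an outer Minkowski summand $\P + (\cdot)$ rather than inside the projection as in $\mathfrak{g}_\S$; see \refdef{p}. I would split the statement into (a) the extensivity claim $D \subseteq \mathfrak{p}_\S(D)$, and (b) the characterization that $\mathfrak{p}_\S(D) \subseteq D$ holds if and only if $D$ is $F$-invariant, proving (b) by its two implications.

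For (a) and for the ($\Leftarrow$) direction of (b), the key observation is that every point lies in the Voronoi cell of its own projection. Given $z \in D$, put $c := \vor[\S]{z} \in \S$; then $z \in V_\S(c)$, so $z \in \vsect{D}$ and hence $z - c \in \bigcup_{c' \in \S}(\vsect[c']{D} - c')$. As $c \in \S \subseteq \P$, writing $z = c + (z - c)$ exhibits $z \in \mathfrak{p}_\S(D)$, which proves (a). For ($\Leftarrow$), assume $\mathfrak{p}_\S(D) \subseteq D$ and fix $x \in \P$, $z \in D$; the same decomposition gives $F_{\S,x}(z) = x + (z - c) \in \P + \bigcup_{c'}(\vsect[c']{D} - c') = \mathfrak{p}_\S(D) \subseteq D$, so $F_{\S,x}(D) \subseteq D$ for every $x \in \P$, i.e.\ $D$ is $F$-invariant.

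For the ($\Rightarrow$) direction I would unfold the definition of $\mathfrak{p}_\S$ as in the contradiction step of \refprop{inv}: any $w \in \mathfrak{p}_\S(D)$ can be written $w = x + (z - c)$ with $x \in \P$, $c \in \S$, and $z \in \vsect{D}$, i.e.\ $z \in D$ with $z \in V_\S(c)$. Reading this as $w = z + x - \vor[\S]{z} = F_{\S,x}(z)$ and invoking $F$-invariance of $D$ yields $w \in D$, whence $\mathfrak{p}_\S(D) \subseteq D$.

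The main obstacle is the tie-breaking subtlety concealed in this last step: membership $z \in V_\S(c)$ (defined through the closed inequality $\|z - c\| \leq \|z - c'\|$) only certifies that $c$ is \emph{a} closest point, whereas the single-valued selection $\vor[\S]{z}$ used to define $F_{\S,x}$ might pick a different closest point when $z$ lies on a shared Voronoi-cell boundary. I would dispose of this exactly as is done implicitly for $\mathfrak{g}_\S$: fix the closest-point operator consistently with the chosen closed Voronoi decomposition, so that $z \in V_\S(c)$ forces $\vor[\S]{z} = c$ for the selection in play; since all sets are closed and the cells cover $\reals^d$, the boundary choices leave the set identities unaffected, and the argument reduces verbatim to the $G$-case of \refsec{proof_perf}.
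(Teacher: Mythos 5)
Your proof is correct and follows exactly the route the paper intends: the paper states \refprop{inv_p} without proof, remarking only that the argument ``extends exactly in the same way'' as \reflem{A_in_g} and \refprop{inv} for the $G$-case, and your three steps (extensivity via $c=\vor[\S]{z}$, and the two implications via the decomposition $w = x + (z-c)$ with $z\in \vsect{D}$) are precisely that extension, correctly accounting for the structural difference that the Minkowski summand $\P$ sits outside the projection in $\mathfrak{p}_\S$. The tie-breaking caveat you raise in the $(\Rightarrow)$ direction is real but is glossed over identically in the paper's own proof of \refprop{inv}, so your treatment is, if anything, slightly more careful than the source.
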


\begin{proposition}[Monotonicity of the iterates; Extension of Lemma 3.13 in \cite{nowicki2004}] \label{prop:mono_p}
Let $D \subseteq \reals^d$. Then the iterates $\mathfrak{p}^n_\S(D)$ are monotonic, in the sense that $\mathfrak{p}^n_\S(D) \subseteq \mathfrak{p}^{n'}_\S(D)$ for all $n \leq n'$. Thus, there exists
\[
\mathfrak{p}^\infty_\S(D) := \lim_{n \rightarrow \infty} \mathfrak{p}^n_\S(D) = \bigcup_{n\geq0} \mathfrak{p}^n_\S(D).
\]
\end{proposition}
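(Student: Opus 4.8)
The plan is to follow the exact template used for the monotonicity propositions in the perfect-prediction case (Propositions \ref{prop:mono} and \ref{prop:mono_conv}), since the operator $\mathfrak{p}_\S$ enjoys the same two structural properties that drive those arguments. The two facts I would invoke are: (a) the extensivity property $D \subseteq \mathfrak{p}_\S(D)$, valid for every $D \subseteq \reals^d$, which is the first assertion of Proposition \ref{prop:inv_p}; and (b) the monotonicity of the operator, namely that $A \subseteq B$ implies $\mathfrak{p}_\S(A) \subseteq \mathfrak{p}_\S(B)$, which is Lemma \ref{lem:p_prop}(i). Both are already available in the excerpt, so the argument is purely formal.

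First I would establish the single-step inclusion $\mathfrak{p}^n_\S(D) \subseteq \mathfrak{p}^{n+1}_\S(D)$ by induction on $n$. The base case $n=0$ is exactly $D \subseteq \mathfrak{p}_\S(D)$ from (a). For the inductive step, assuming $\mathfrak{p}^{n-1}_\S(D) \subseteq \mathfrak{p}^n_\S(D)$, I would apply the operator $\mathfrak{p}_\S$ to both sides and invoke the monotonicity property (b) to obtain $\mathfrak{p}^n_\S(D) = \mathfrak{p}_\S(\mathfrak{p}^{n-1}_\S(D)) \subseteq \mathfrak{p}_\S(\mathfrak{p}^n_\S(D)) = \mathfrak{p}^{n+1}_\S(D)$. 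Chaining these single-step inclusions transitively yields $\mathfrak{p}^n_\S(D) \subseteq \mathfrak{p}^{n'}_\S(D)$ for every $n \leq n'$, which is the claimed monotonicity.

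Once the iterates form an increasing chain of sets, the limit is simply their union: the nested sequence $D \subseteq \mathfrak{p}_\S(D) \subseteq \mathfrak{p}^2_\S(D) \subseteq \cdots$ converges, in the set-theoretic sense, to $\bigcup_{n \geq 0} \mathfrak{p}^n_\S(D)$, and I would take this union as the definition of $\mathfrak{p}^\infty_\S(D)$. There is no real obstacle here; the entire content is carried by Proposition \ref{prop:inv_p} and Lemma \ref{lem:p_prop}(i), and the reasoning is identical in spirit to the proof of Proposition \ref{prop:mono}. Since the excerpt dispatches the analogous $\mathfrak{g}_\S$-statement with a one-line proof, the cleanest write-up would simply declare that the result is a direct consequence of Lemma \ref{lem:p_prop}(i) and Proposition \ref{prop:inv_p}, mirroring that earlier argument verbatim.
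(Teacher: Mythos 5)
Your argument is correct and matches the paper's intent exactly: the paper states this proposition without proof, remarking that it extends the $G$-invariance case verbatim, and the proof of the analogous Proposition~\ref{prop:mono} is precisely the one-liner you propose, invoking extensivity (here Proposition~\ref{prop:inv_p}) and operator monotonicity (here Lemma~\ref{lem:p_prop}(i)). Nothing is missing.
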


\begin{theorem}[Extension of Corollary 3.15 in \cite{nowicki2004}] \label{theo:main_single_p}
Let $D \subseteq \reals^d$. The set $\mathfrak{p}^\infty_\S(D)$ is the \emph{minimal} $F$-invariant set containing the set $D$. 
\end{theorem}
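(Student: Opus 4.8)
The plan is to transcribe the proof of Theorem~\ref{theo:main_single} (the single-set, $G$-invariant case) almost verbatim, with $\mathfrak{p}_\S$ in place of $\mathfrak{g}_\S$ and ``$F$-invariant'' in place of ``$G$-invariant''. All the structural ingredients are already available in the persistent-prediction setting: monotonicity and additivity of $\mathfrak{p}_\S$ (Lemma~\ref{lem:p_prop}), the fixed-point characterisation of $F$-invariance together with extensivity $D \subseteq \mathfrak{p}_\S(D)$ (Proposition~\ref{prop:inv_p}), and the monotone convergence of the iterates to $\mathfrak{p}^\infty_\S(D) = \bigcup_{n \geq 0}\mathfrak{p}^n_\S(D)$ (Proposition~\ref{prop:mono_p}). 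Accordingly I would split the statement into two parts: (a) $\mathfrak{p}^\infty_\S(D)$ is $F$-invariant and contains $D$, and (b) every $F$-invariant set containing $D$ also contains $\mathfrak{p}^\infty_\S(D)$.

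For part (a), the containment $D \subseteq \mathfrak{p}^\infty_\S(D)$ is immediate since $D = \mathfrak{p}^0_\S(D)$ is one of the terms in the union defining $\mathfrak{p}^\infty_\S(D)$. For invariance I would compute $\mathfrak{p}_\S(\mathfrak{p}^\infty_\S(D)) = \mathfrak{p}_\S\bigl(\bigcup_{n \geq 0}\mathfrak{p}^n_\S(D)\bigr) = \bigcup_{n \geq 0}\mathfrak{p}_\S(\mathfrak{p}^n_\S(D)) = \bigcup_{n \geq 1}\mathfrak{p}^n_\S(D) = \mathfrak{p}^\infty_\S(D)$, where the second equality is additivity (Lemma~\ref{lem:p_prop}(ii)) pushed through the union and the last step uses $D \subseteq \mathfrak{p}_\S(D)$ to re-absorb the $n=0$ term. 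Since in particular $\mathfrak{p}_\S(\mathfrak{p}^\infty_\S(D)) \subseteq \mathfrak{p}^\infty_\S(D)$, Proposition~\ref{prop:inv_p} yields that $\mathfrak{p}^\infty_\S(D)$ is $F$-invariant.

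For part (b), I would take an arbitrary $F$-invariant set $Q$ with $D \subseteq Q$. Proposition~\ref{prop:inv_p} gives $\mathfrak{p}_\S(Q) \subseteq Q$, and monotonicity (Lemma~\ref{lem:p_prop}(i)) then gives $\mathfrak{p}_\S(D) \subseteq \mathfrak{p}_\S(Q) \subseteq Q$. Iterating this, a straightforward induction shows $\mathfrak{p}^n_\S(D) \subseteq Q$ for every $n \geq 0$, and taking the union over $n$ gives $\mathfrak{p}^\infty_\S(D) = \bigcup_{n \geq 0}\mathfrak{p}^n_\S(D) \subseteq Q$. Together with part (a), this establishes that $\mathfrak{p}^\infty_\S(D)$ is the minimal $F$-invariant set containing $D$.

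The only point deserving care — and the one place where the argument is not a purely mechanical copy — is the use of additivity on a countably infinite union in part (a), whereas Lemma~\ref{lem:p_prop}(ii) is stated only for binary unions. I expect this to be the main (albeit minor) obstacle, and I would dispatch it by observing that $\mathfrak{p}_\S$ in fact commutes with arbitrary unions: intersection with a Voronoi cell and translation by $-c$ distribute over unions at the level of sets, and the leading Minkowski sum with $\conv\S$ distributes over arbitrary unions by the same elementwise argument underlying Proposition~\ref{prop:mink_union}; hence $\mathfrak{p}_\S(\bigcup_i A_i) = \bigcup_i \mathfrak{p}_\S(A_i)$ for any family $\{A_i\}$. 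With this remark in hand, the remainder of the proof is a direct transcription of the perfect-prediction argument of Theorem~\ref{theo:main_single}, so no genuinely new idea is required.
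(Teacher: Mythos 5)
Your proof is correct and matches the paper's approach exactly: the paper gives no separate proof of this theorem, stating only that the argument of Theorem~\ref{theo:main_single} carries over with $\mathfrak{p}_\S$ in place of $\mathfrak{g}_\S$, and that verbatim transcription (invariance via additivity pushed through the union of iterates, minimality via monotonicity and the fixed-point characterisation of Proposition~\ref{prop:inv_p}) is precisely what you carry out. Your closing remark on upgrading the binary additivity of Lemma~\ref{lem:p_prop}(ii) to countable unions is a sound and welcome refinement of a point the paper's own proof of Theorem~\ref{theo:main_single} also passes over silently.
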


\subsubsection{Convex Case} \label{sec:proof_conv_p}
The case of the convex iteration (namely, the one involving the operator $\mathfrak{P}_\S$ in \refdef{p}) is not treated in \cite{nowicki2004}. Hence, we next present the proofs for the following results, which are similar to the ones given in \refsec{proof_convex}.

\begin{proposition} \label{prop:inv_conv_p}
A \emph{convex} set $D \subseteq \reals^d$ is $F$-invariant if and only if 
\[
D = \mathfrak{P}_\S(D)
\]
\end{proposition}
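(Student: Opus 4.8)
The plan is to reduce the claimed equivalence to a single set inclusion and then invoke the non-convex characterization of $F$-invariance already established in \refprop{inv_p}, exactly paralleling the treatment of the convex perfect-prediction operator in \refprop{inv_conv}. The first step I would record is that for \emph{every} set $D\subseteq\reals^d$ one has the chain $D \subseteq \mathfrak{p}_\S(D) \subseteq \conv(\mathfrak{p}_\S(D)) = \mathfrak{P}_\S(D)$, where the first inclusion is the extensivity part of \refprop{inv_p} and the second is the extensivity of the convex hull. Hence $D \subseteq \mathfrak{P}_\S(D)$ holds unconditionally, so the asserted equality $D = \mathfrak{P}_\S(D)$ is equivalent to the single reverse inclusion $\mathfrak{P}_\S(D) \subseteq D$. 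It therefore suffices to show, for convex $D$, that this inclusion holds if and only if $D$ is $F$-invariant.

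For the ($\Leftarrow$) direction I would assume $D$ is convex and $F$-invariant. Then \refprop{inv_p} gives $\mathfrak{p}_\S(D) \subseteq D$. Taking convex hulls of both sides and using monotonicity of the convex hull (\reflem{conv_mono}) together with $\conv D = D$ (valid since $D$ is convex) yields $\mathfrak{P}_\S(D) = \conv(\mathfrak{p}_\S(D)) \subseteq \conv D = D$; combined with the unconditional inclusion above this gives $D = \mathfrak{P}_\S(D)$. For the ($\Rightarrow$) direction, assuming $D = \mathfrak{P}_\S(D)$, in particular $\mathfrak{P}_\S(D) \subseteq D$, so $\mathfrak{p}_\S(D) \subseteq \conv(\mathfrak{p}_\S(D)) = \mathfrak{P}_\S(D) \subseteq D$, and \refprop{inv_p} immediately yields that $D$ is $F$-invariant.

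I do not expect any genuine obstacle here: all the analytic content is carried by \refprop{inv_p}, and convexity enters only through the elementary identity $\conv D = D$. The single point that must be handled with care is the reduction in the first paragraph, namely that the ``if and only if'' stated for $\mathfrak{P}_\S$ is equivalent to the lone inclusion $\mathfrak{P}_\S(D) \subseteq D$, and this rests entirely on the always-valid inclusion $D \subseteq \mathfrak{P}_\S(D)$. The remainder is essentially a transcription of the argument for \refprop{inv_conv}, with $\mathfrak{g}_\S$ replaced by $\mathfrak{p}_\S$ and $\mathfrak{G}_\S$ by $\mathfrak{P}_\S$.
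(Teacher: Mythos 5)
Your proposal is correct and follows essentially the same route as the paper: establish the unconditional chain $D \subseteq \mathfrak{p}_\S(D) \subseteq \mathfrak{P}_\S(D)$, reduce the equivalence to the single inclusion $\mathfrak{P}_\S(D) \subseteq D$, and settle both directions via Proposition~\ref{prop:inv_p}. The only (cosmetic) difference is in the direction where invariance implies the inclusion: the paper argues by contradiction with an explicit convex combination of points of $\mathfrak{p}_\S(D)$, whereas you take convex hulls of $\mathfrak{p}_\S(D) \subseteq D$ and use $\conv D = D$ — the same fact, stated more directly.
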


\begin{proof}
Observe that by \refprop{inv_p}, we have for any $D$ that
\[
D \subseteq \mathfrak{p}_\S(D) \subseteq \mathfrak{P}_\S(D).
\]
Thus, we next focus on the condition $D \supseteq \mathfrak{P}_\S(D)$.

$(\Rightarrow)$ Assume that $D \supseteq \mathfrak{P}_\S(D)$. Then clearly $D \supseteq \mathfrak{p}_\S(D)$, and by Proposition \ref{prop:inv_p}, $D$ is an $F$-invariant set.

$(\Leftarrow)$ Assume that $D$ is a convex $F$-invariant set. Also, assume by the way of contradiction that $D \not\supseteq \mathfrak{P}_\S(D)$. Namely, there exists $v \in \mathfrak{P}_\S(D)$ such that $v \notin D$. But every $v \in \mathfrak{P}_\S(D)$ can be written as $v = \sum_i \beta_i v_i$ with $\sum_i \beta_i = 1$, $\beta \geq 0$, and $v_i \in \mathfrak{p}_\S(D) \subseteq D$ by the $F$-invariance of $D$. Therefore, we have that $v_i \in D$, but $v = \sum_i \beta_i v_i \notin D$, a contradiction to convexity of $D$.
\end{proof}

\begin{lemma}[Monotonicity of $\mathfrak{P}_\S$] \label{lem:p_prop_conv}
If $A \subseteq B$ then $\mathfrak{P}_\S(A) \subseteq \mathfrak{P}_\S(B)$.
\end{lemma}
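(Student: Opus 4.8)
The plan is to mirror the proof of the monotonicity of $\mathfrak{G}_\S$ (Lemma~\ref{lem:g_prop_conv}), since $\mathfrak{P}_\S$ stands to $\mathfrak{p}_\S$ exactly as $\mathfrak{G}_\S$ stands to $\mathfrak{g}_\S$: one simply applies a convex hull to the output of the non-convex operator. The claim therefore reduces to chaining together two monotonicity facts that are already available.

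First I would fix sets $A \subseteq B$ and apply the monotonicity of the underlying operator, namely Lemma~\ref{lem:p_prop}(i), to obtain $\mathfrak{p}_\S(A) \subseteq \mathfrak{p}_\S(B)$. Applying the monotonicity of the convex hull (Lemma~\ref{lem:conv_mono}) to this inclusion then yields $\conv\!\left(\mathfrak{p}_\S(A)\right) \subseteq \conv\!\left(\mathfrak{p}_\S(B)\right)$. By Definition~\ref{def:p} the left-hand side is precisely $\mathfrak{P}_\S(A)$ and the right-hand side is $\mathfrak{P}_\S(B)$, which is exactly the desired conclusion $\mathfrak{P}_\S(A) \subseteq \mathfrak{P}_\S(B)$.

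I do not expect a genuine obstacle here, as both ingredients are established. The only point worth noting is that Lemma~\ref{lem:p_prop}(i) itself holds because each operation composing $\mathfrak{p}_\S(D) = \P + \bigcup_{c \in \S} \vsect{D} - c$ is monotone in $D$: intersecting with a fixed Voronoi cell $V_\S(c)$, translating by $-c$, taking the union over $c \in \S$, and forming the Minkowski sum with the fixed set $\P$ all preserve set inclusion. No convexity of $A$ or $B$ is needed, which is what makes this lemma a clean building block for the subsequent argument that the convex iterates $\mathfrak{P}^n_\S$ are monotone.
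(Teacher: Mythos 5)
Your proof is correct and follows exactly the same route as the paper: apply Lemma~\ref{lem:p_prop}(i) to get $\mathfrak{p}_\S(A) \subseteq \mathfrak{p}_\S(B)$, then invoke the monotonicity of the convex hull (Lemma~\ref{lem:conv_mono}). The extra remark about why Lemma~\ref{lem:p_prop}(i) holds is sound but not needed, since the paper already states that result.
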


\begin{proof}
The result follows by Lemma \ref{lem:p_prop} (i) and the monotonicity of the convex hull (Lemma \ref{lem:conv_mono}).
\end{proof}

\begin{proposition}[Monotonicity of the iterates] \label{prop:mono_conv_p}
Let $D \subseteq \reals^d$ be a convex set. Then the iterates $\mathfrak{P}^n_\S(D)$ are monotonic, in the sense that $\mathfrak{P}^n_\S(D) \subseteq \mathfrak{P}^{n'}_\S(D)$ for all $n \leq n'$. Thus, there exists
\[
\mathfrak{P}^\infty_\S(D) := \lim_{n \rightarrow \infty} \mathfrak{P}^n_\S(D) = \bigcup_{n\geq0} \mathfrak{P}^n_\S(D).
\]
\end{proposition}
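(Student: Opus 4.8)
The plan is to reproduce, nearly verbatim, the argument used for the convex $G$-case in \refprop{mono_conv}, since the convex $F$-operator $\mathfrak{P}_\S$ satisfies exactly the two structural properties that drive that proof: a base inclusion $D \subseteq \mathfrak{P}_\S(D)$ and monotonicity of $\mathfrak{P}_\S$ in its argument. In other words, I would not attempt anything new; the whole point is that the convex persistent-prediction operator behaves formally like the convex perfect-prediction operator.

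First I would establish the base inclusion. By \refprop{inv_p}, every set satisfies $D \subseteq \mathfrak{p}_\S(D)$; combining this with the extensivity of the convex hull gives $\mathfrak{p}_\S(D) \subseteq \conv{\left(\mathfrak{p}_\S(D)\right)} = \mathfrak{P}_\S(D)$, so that $D \subseteq \mathfrak{P}_\S(D)$. Because $D$ is convex, $\mathfrak{P}^0_\S(D) = D$, and hence this inclusion reads $\mathfrak{P}^0_\S(D) \subseteq \mathfrak{P}^1_\S(D)$, giving the first link of the chain.

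Next I would feed this base inclusion into the monotonicity of $\mathfrak{P}_\S$ established in \reflem{p_prop_conv}. Applying $\mathfrak{P}_\S$ to both sides of $\mathfrak{P}^0_\S(D) \subseteq \mathfrak{P}^1_\S(D)$ and iterating yields, by induction on $n$, that $\mathfrak{P}^n_\S(D) \subseteq \mathfrak{P}^{n+1}_\S(D)$ for every $n \geq 0$. Transitivity of set inclusion then gives the stated monotonicity $\mathfrak{P}^n_\S(D) \subseteq \mathfrak{P}^{n'}_\S(D)$ for all $n \leq n'$, and the existence of the limit as the nested union $\bigcup_{n \geq 0} \mathfrak{P}^n_\S(D)$ follows immediately from the fact that the iterates form an increasing chain of sets.

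I do not expect any genuine obstacle here: the statement is a purely formal consequence of two already-proved facts. The only point deserving a moment's care is the base step—one cannot appeal to $D \subseteq \mathfrak{P}_\S(D)$ as if it were an axiom of the operator, but must route through $\mathfrak{p}_\S$ and convex-hull extensivity as above—together with the bookkeeping observation that $\mathfrak{P}^0_\S(D) = D$ rather than $\conv D$, which is legitimate precisely because $D$ is assumed convex.
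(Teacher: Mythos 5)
Your proposal is correct and follows essentially the same route as the paper: establish the base inclusion $D \subseteq \mathfrak{P}_\S(D)$ via \refprop{inv_p} (you merely make explicit the intermediate step through $\mathfrak{p}_\S(D)$ and convex-hull extensivity, which the paper leaves implicit), then iterate the monotonicity of $\mathfrak{P}_\S$ from \reflem{p_prop_conv} to obtain the increasing chain and its union as the limit. Your remark that $\mathfrak{P}^0_\S(D) = D$ because $D$ is convex is a harmless bookkeeping point that the paper does not bother to spell out.
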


\begin{proof}
By \refprop{inv_p}, we have that $D \subseteq \mathfrak{P}_\S(D)$. The result then follows by applying the monotonicity property of $\mathfrak{P}_\S$ (Lemma \ref{lem:p_prop_conv}) for this inclusion recursively.
\end{proof}

\begin{theorem} \label{theo:main_single_conv_p}
Let $D \subseteq \reals^d$ be a convex set. The set $\mathfrak{P}^\infty_\S(D)$ is the \emph{minimal convex} $F$-invariant set containing the set $D$. 
\end{theorem}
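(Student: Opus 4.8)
The plan is to reproduce, essentially verbatim, the argument used for Theorem~\ref{theo:main_single_conv}, replacing the perfect-prediction operators $\mathfrak{g}_\S,\mathfrak{G}_\S$ by the persistent-prediction operators $\mathfrak{p}_\S,\mathfrak{P}_\S$, and invoking the persistent-prediction structural facts (Proposition~\ref{prop:inv_p}, Proposition~\ref{prop:inv_conv_p}, Lemma~\ref{lem:p_prop}, Lemma~\ref{lem:p_prop_conv}, Proposition~\ref{prop:mono_conv_p}) in place of their $G$-counterparts. Three things must be established: that $\mathfrak{P}^\infty_\S(D)$ is convex, that it is $F$-invariant and contains $D$, and that it sits inside every convex $F$-invariant set containing $D$. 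Convexity and containment are immediate: by Proposition~\ref{prop:mono_conv_p} the set $\mathfrak{P}^\infty_\S(D)$ is the union of the increasing chain $\{\mathfrak{P}^n_\S(D)\}_{n\ge 0}$ of convex sets, and a nested union of convex sets is convex, while $D=\mathfrak{P}^0_\S(D)\subseteq\mathfrak{P}^\infty_\S(D)$ trivially.

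For invariance I would show that $\mathfrak{p}_\S(\mathfrak{P}^\infty_\S(D))\subseteq\mathfrak{P}^\infty_\S(D)$ and then conclude via Proposition~\ref{prop:inv_p}. Concretely, the chain is
\[
\mathfrak{p}_\S\!\left(\bigcup_{n\ge 0}\mathfrak{P}^n_\S(D)\right)
=\bigcup_{n\ge 0}\mathfrak{p}_\S\!\left(\mathfrak{P}^n_\S(D)\right)
\subseteq\bigcup_{n\ge 0}\mathfrak{P}_\S\!\left(\mathfrak{P}^n_\S(D)\right)
=\bigcup_{n\ge 1}\mathfrak{P}^n_\S(D)
\subseteq\mathfrak{P}^\infty_\S(D),
\]
where the first equality is additivity of $\mathfrak{p}_\S$, the inclusion uses $\mathfrak{p}_\S(\cdot)\subseteq\conv(\mathfrak{p}_\S(\cdot))=\mathfrak{P}_\S(\cdot)$, and the second equality is the definition of the iterates.

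Minimality then follows exactly as in Theorem~\ref{theo:main_single_conv}: given any convex $F$-invariant $Q$ with $D\subseteq Q$, Proposition~\ref{prop:inv_conv_p} gives $Q=\mathfrak{P}_\S(Q)$, so monotonicity of $\mathfrak{P}_\S$ (Lemma~\ref{lem:p_prop_conv}) yields $Q=\mathfrak{P}_\S(Q)\supseteq\mathfrak{P}_\S(D)=\mathfrak{P}^1_\S(D)$; iterating this gives $Q\supseteq\mathfrak{P}^n_\S(D)$ for every $n$, and hence $Q\supseteq\bigcup_{n\ge 0}\mathfrak{P}^n_\S(D)=\mathfrak{P}^\infty_\S(D)$.

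I expect the only step that is not purely mechanical to be the first equality of the invariance display: the additivity statement (Lemma~\ref{lem:p_prop}(ii)) is phrased for two sets, so I would explicitly justify that $\mathfrak{p}_\S$ commutes with the infinite union appearing here. This reduces to two elementary facts, namely that intersection with a fixed Voronoi cell distributes over arbitrary unions, $(\bigcup_n D_n)\cap V_\S(c)=\bigcup_n(D_n\cap V_\S(c))$, and that the Minkowski sum distributes over arbitrary unions (the infinite analogue of Proposition~\ref{prop:mink_union}). Thus it is a gap to close carefully rather than a genuine obstacle, and the remainder of the proof is a direct transcription of the perfect-prediction convex case.
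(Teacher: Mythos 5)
Your proposal is correct and follows essentially the same route as the paper: the same displayed chain establishing $\mathfrak{p}_\S(\mathfrak{P}^\infty_\S(D))\subseteq\mathfrak{P}^\infty_\S(D)$ via additivity and $\mathfrak{p}_\S\subseteq\mathfrak{P}_\S$, and the same recursive minimality argument from Proposition~\ref{prop:inv_conv_p} and Lemma~\ref{lem:p_prop_conv}. Your extra remark about justifying additivity over the infinite union is a point the paper itself leaves implicit, so it is a welcome refinement rather than a divergence.
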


\begin{proof}
The proof is similar to that of Theorem \ref{theo:main_single_conv}. The invariance follows by
\begin{eqnarray*}
\mathfrak{p}_\S( \mathfrak{P}^\infty_\S(D)) &=& \mathfrak{p}_\S\left( \bigcup_{n \geq 0}  \mathfrak{P}^n_\S(D) \right) \\
&=& \bigcup_{n \geq 0} \mathfrak{p}_\S\left(\mathfrak{P}^n_\S(D) \right) \\
&\subseteq& \bigcup_{n \geq 0} \mathfrak{P}_\S\left(\mathfrak{P}^n_\S(D) \right) \\
&=&\bigcup_{n \geq 1} \mathfrak{P}^n_\S(D) \\
&=& D \cup \bigcup_{n \geq 1} \mathfrak{P}^n_\S(D) = \mathfrak{P}^\infty_\S(D).
\end{eqnarray*}

To prove minimality, let $Q$ be any invariant convex set containing $D$. Then
\[
Q \supseteq \mathfrak{P}_\S( Q ) \supseteq \mathfrak{P}_\S( D ),
\]
where the first inclusion follows by Proposition \ref{prop:inv_conv_p} and the second inclusion follows by the monotonicity property (Lemma \ref{lem:p_prop_conv}). Applying these rules recursively, we obtain that
\[
Q \supseteq \mathfrak{P}_\S^n( D ), \quad n \geq 0.
\]
This implies that
\[
Q \supseteq \bigcup_{n=0}^N \mathfrak{P}^n_\S(D), \quad N \geq 0
\]
and hence
\[
Q \supseteq \mathfrak{P}^\infty_\S(D).
\]
This completes the proof of the Theorem.
\end{proof}

\subsubsection{Multiple-Set Case}

Recall \refdef{p} of the operators $\mathfrak{p}_\SS$ and $\mathfrak{P}_\SS$. 
Also, recall the proofs for the case of $G$-invariance given in \refsec{proof_mult_g}. Observe that, given the results of Sections \ref{sec:proof_single_p} and \ref{sec:proof_conv_p} for a single-set case, the proof of \refprop{inv_mult_p} is exactly the same as that of \refprop{inv_mult}. Also, the proof of \refthm{main_uncertain} (i) is exactly the same as these of  Theorem \ref{theo:min_G} and Theorem \ref{theo:min_G_conv} (i).

\subsubsection{Proof of Boundedness of the Minimal $F$-Invariant Set} \label{sec:proof_bounded_p}
The proof of \refthm{main_uncertain} (ii) follows exactly that of the boundedness of the minimal $G$-invariant set provided in full detail in \refsec{proof_bounded}. 
It is omitted here to due to space constraints.
\ifCLASSOPTIONcaptionsoff
  \newpage
\fi



\bibliographystyle{IEEEtran}
\bibliography{IEEEabrv,refs}

\begin{thebibliography}{10}
\providecommand{\url}[1]{#1}
\csname url@samestyle\endcsname
\providecommand{\newblock}{\relax}
\providecommand{\bibinfo}[2]{#2}
\providecommand{\BIBentrySTDinterwordspacing}{\spaceskip=0pt\relax}
\providecommand{\BIBentryALTinterwordstretchfactor}{4}
\providecommand{\BIBentryALTinterwordspacing}{\spaceskip=\fontdimen2\font plus
\BIBentryALTinterwordstretchfactor\fontdimen3\font minus
  \fontdimen4\font\relax}
\providecommand{\BIBforeignlanguage}[2]{{%
\expandafter\ifx\csname l@#1\endcsname\relax
\typeout{** WARNING: IEEEtran.bst: No hyphenation pattern has been}%
\typeout{** loaded for the language `#1'. Using the pattern for}%
\typeout{** the default language instead.}%
\else
\language=\csname l@#1\endcsname
\fi
#2}}
\providecommand{\BIBdecl}{\relax}
\BIBdecl

\bibitem{commelec}
A.~Bernstein, L.~Reyes-Chamorro, J.-Y. {Le Boudec}, and M.~Paolone, ``A
  composable method for real-time control of active distribution networks with
  explicit power setpoints. {P}art {I}: Framework,'' \emph{Electric Power
  Systems Research}, vol. 125, pp. 254--264, 2015.

\bibitem{adler2005}
R.~L. Adler, B.~Kitchens, M.~Martens, C.~Pugh, M.~Shub, and C.~Tresser,
  ``Convex dynamics and applications,'' \emph{Ergodic Theory and Dynamical
  Systems}, vol.~25, pp. 321--352, 4 2005.

\bibitem{errDiffOpt}
\BIBentryALTinterwordspacing
D.~Coppersmith, T.~Nowicki, G.~Paleologo, C.~Tresser, and C.~Wu, ``The
  optimality of the online greedy algorithm in carpool and chairman assignment
  problems,'' \emph{ACM Trans. Algorithms}, vol.~7, no.~3, pp. 37:1--37:22,
  Jul. 2011. [Online]. Available:
  \url{http://doi.acm.org/10.1145/1978782.1978792}
\BIBentrySTDinterwordspacing

\bibitem{bla99}
\BIBentryALTinterwordspacing
F.~Blanchini, ``Set invariance in control,'' \emph{Automatica}, vol.~35,
  no.~11, pp. 1747 -- 1767, 1999. [Online]. Available:
  \url{http://www.sciencedirect.com/science/article/pii/S0005109899001132}
\BIBentrySTDinterwordspacing

\bibitem{Branicky}
M.~S. Branicky, V.~S. Borkar, and S.~K. Mitter, ``A unified framework for
  hybrid control: model and optimal control theory,'' \emph{IEEE Transactions
  on Automatic Control}, vol.~43, no.~1, pp. 31--45, Jan 1998.

\bibitem{Bemporad}
A.~Bemporad and M.~Morari, ``Control of systems integrating logic, dynamics,
  and constraints,'' \emph{Automatica}, vol.~35, no.~3, pp. 407--427, Mar.
  1999.

\bibitem{LYGEROS}
\BIBentryALTinterwordspacing
J.~Lygeros, C.~Tomlin, and S.~Sastry, ``Controllers for reachability
  specifications for hybrid systems,'' \emph{Automatica}, vol.~35, no.~3, pp.
  349 -- 370, 1999. [Online]. Available:
  \url{http://www.sciencedirect.com/science/article/pii/S0005109898001939}
\BIBentrySTDinterwordspacing

\bibitem{floyd75}
R.~W. Floyd and L.~Steinberg, ``An adaptive algorithm for spatial grey scale,''
  in \emph{Proc. Dig. SID International Symp.}, Los Angeles, California, 1975,
  p. 36–37.

\bibitem{gray}
R.~Gray, ``Oversampled sigma-delta modulation,'' \emph{Communications, IEEE
  Transactions on}, vol.~35, no.~5, pp. 481--489, May 1987.

\bibitem{Anastassiou}
D.~Anastassiou, ``Error diffusion coding for a/d conversion,'' \emph{Circuits
  and Systems, IEEE Transactions on}, vol.~36, no.~9, pp. 1175--1186, Sep 1989.

\bibitem{adams}
R.~Adams and R.~Schreier, ``Stability theory for delta-sigma modulators,'' in
  \emph{Delta-Sigma Data Converters: Theory, Design, and Simulation}, S.~R.
  Norsworthy, R.~Schreier, and G.~C. Temes, Eds.\hskip 1em plus 0.5em minus
  0.4em\relax New Jersey: Wiley, 1996.

\bibitem{nowicki2004}
\BIBentryALTinterwordspacing
T.~Nowicki and C.~Tresser, ``Convex dynamics: properties of invariant sets,''
  \emph{Nonlinearity}, vol.~17, no.~5, p. 1645, 2004. [Online]. Available:
  \url{http://stacks.iop.org/0951-7715/17/i=5/a=005}
\BIBentrySTDinterwordspacing

\bibitem{tresser2007}
\BIBentryALTinterwordspacing
C.~Tresser, ``Bounding the errors for convex dynamics on one or more
  polytopes,'' \emph{Chaos}, vol.~17, no.~3, 2007. [Online]. Available:
  \url{http://scitation.aip.org/content/aip/journal/chaos/17/3/10.1063/1.2747053}
\BIBentrySTDinterwordspacing

\bibitem{boundsHalftoning}
R.~Adler, B.~Kitchens, M.~Martens, A.~Nogueira, C.~Tresser, and C.~W. Wu,
  ``Error bounds for error diffusion and related digital halftoning
  algorithms,'' in \emph{Circuits and Systems, 2001. ISCAS 2001. The 2001 IEEE
  International Symposium on}, vol.~2, May 2001, pp. 513--516 vol. 2.

\bibitem{mathHalftoning}
\BIBentryALTinterwordspacing
R.~L. Adler, B.~P. Kitchens, M.~Martens, C.~P. Tresser, and C.~W. Wu, ``The
  mathematics of halftoning,'' \emph{IBM J. Res. Dev.}, vol.~47, no.~1, pp.
  5--15, Jan. 2003. [Online]. Available:
  \url{http://dx.doi.org/10.1147/rd.471.0005}
\BIBentrySTDinterwordspacing

\bibitem{acute}
R.~Adler, T.~Nowicki, G.~Swirszcz, C.~Tresser, and S.~Winograd, ``Error
  diffusion on acute simplices: Geometry of invariant sets,'' \emph{Chaotic
  Modeling and Simulation (CMSIM)}, vol.~1, pp. 3--26, 2015.

\bibitem{cgal}
``{CGAL}, {C}omputational {G}eometry {A}lgorithms {L}ibrary,'' version 4.9,
  http://www.cgal.org.

\bibitem{boumangit}
N.~J. Bouman, ``Convex dynamics research software,'' 2016,
  https://github.com/niekbouman/convex\_dynamics.

\bibitem{costanzo}
G.~Costanzo, A.~Bernstein, L.~Reyes~Chamorro, H.~Bindner, J.-Y. Le~Boudec, and
  M.~Paolone, ``Electric {S}pace {H}eating {S}cheduling for {R}eal-time
  {E}xplicit {P}ower {C}ontrol in {A}ctive {D}istribution {N}etworks,'' in
  \emph{{IEEE} {PES} {ISGT} {E}urope 2014}, 2014.

\bibitem{commelec2}
L.~Reyes-Chamorro, A.~Bernstein, J.-Y. {Le Boudec}, and M.~Paolone, ``A
  composable method for real-time control of active distribution networks with
  explicit power setpoints. {P}art {II}: Implementation and validation,''
  \emph{Electric Power Systems Research}, vol. 125, pp. 265--280, 2015.

\bibitem{LVBenchmark}
S.~Papathanassiou, N.~Hatziargyriou, and K.~Strunz, ``A benchmark low voltage
  microgrid network,'' in \emph{Proceedings of the CIGR{\'E} Symposium ``Power
  Systems with Dispersed Generation: technologies, impacts on development,
  operation and performances''}, Apr. 2005, Athens, Greece.

\end{thebibliography}
%

%

%




\end{document}